\documentclass[10pt,a4paper]{amsart}
\usepackage{amsmath}
\usepackage{amsfonts}
\usepackage{amssymb}
\usepackage{graphicx}
\usepackage[utf8]{inputenc}
\usepackage{pdflscape}
\usepackage{float}
\usepackage{mathtools}
\usepackage{appendix}
\setcounter{MaxMatrixCols}{30}
\vfuzz2pt
\hfuzz2pt

\setlength{\oddsidemargin}{ 0.0 in} \setlength{\parindent}{ 24pt}
\setlength{\evensidemargin}{ 0.0 in} \setlength{\parindent}{ 24pt}
\setlength{\textheight} {9.5 in} \setlength{\textwidth}{ 6.5 in}
\setlength{\topmargin}{ -.4 in}
\usepackage[colorlinks=true, linkcolor=red, citecolor=blue]{hyperref}
%\usepackage[pagebackref=true, bookmarksopen=true, colorlinks=true, linkcolor=red, citecolor=blue]{hyperref}
%\usepackage{showkeys}
%\usepackage{mathtools}
%\mathtoolsset{showonlyrefs}
%\newcommand{\px}{\partial_x}
%\newcommand{\pt}{\partial_t}
\usepackage{epsfig}
\usepackage{tikz}
\usepackage{mathrsfs}

\numberwithin{equation}{section}
\makeatletter
\@namedef{subjclassname@2010}{\textup{2020} Mathematics Subject Classification}
\makeatother

%%%%%%%%%%%%Comandos para definições, teoremas e etc.%%%%%%%%%
\newtheorem{theorem}{Theorem}[section]
\newtheorem{definition}{Definition}[section]
\newtheorem{lemma}{Lemma}[section]
\newtheorem{proposition}{Proposition}[section]
\newtheorem{corollary}{Corollary}[section]
\newtheorem{remark}{Remark}[section]
\newtheorem{claim}{\textit{Claim}}[section]
%\newtheorem{hip}{H}
%\newcommand{\im}{\text{Im}}
%\newcommand{\dist}{\text{dist}}
%\newcommand{\re}{\text{Re}}
%\newcommand{\ad}{\text{adj}}
%\newcommand{\sen}{\text{\rm sen}}
%\newcommand{\conv}{\text{\rm conv }}
%\newcommand{\diam}{\text{\rm  diam }}
%\newcommand{\supp}{\text{\rm supp }}
%\newcommand{\D}{\textbf{Demonstração:}}
%\newcommand{\fD}{\hfill$\blacksquare$}
%\newenvironment{Demonstra}[1][Demonstra\c c\~ao]{\noindent\textbf{#1:} }{\hfill\rule{0.5em}{0.5em}}
%\newenvironment{Obs}[1][Observa\c c\~ao]{\noindent\textbf{#1:}}{}
%\newcommand{\h}{\hspace{1.5cm}}
%\newenvironment{proof}[1][Proof]{\noindent\textbf{#1:} }{\hfill$\blacksquare$}

%\newenvironment{Sol}[1][Solu\c c\~ao]{\noindent\textbf{#1:} }{\hfill$\square$}
%\newenvironment{just}[1][Justificativa]{\noindent\textbf{#1:} }{\hfill$\square$}
%\newenvironment{prova}[1][]{\noindent\textbf{#1} }{\hfill$\square$}
%%%%%%%%%%%%%%%%%%%%%%%%%%%%%%%%%%%%%%%%%%%%%%%%%%%%%%%%%%%%%%%%

%%% Command to make a comment

%%%%%% New brackets
%\usepackage{ stmaryrd }
%\newcommand{\lb}{\llbracket}
%\newcommand{\rb}{\rrbracket}

\begin{document}
\title[KdV equation: A study on star-graphs]{An analysis of the entire functions associated with the operator of the KdV equation}
%\title[KdV equation: A study of control on star-graphs]{Boundary observation of the KdV equation on graphs}
	%\title[KdV equation: A study on star-graphs]{Neumann and mixed boundary controls of the KdV equation on graphs}
	%\title[Neumann and mixed boundary control of KdV]{A study of the boundary control of the KdV equation on star-graphs: Part I}
	\author[Capistrano--Filho]{Roberto de Almeida Capistrano--Filho}
	\author[Parada]{Hugo Parada}
	\author[da Silva]{Jandeilson Santos da Silva$^*$}

	\address{Departamento de Matem\'atica, Universidade Federal de Pernambuco, S/N Cidade Universit\'aria, 50740-545, Recife (PE), Brazil}
	\email{roberto.capistranofilho@ufpe.br}
	\email{jandeilson.santos@ufpe.br}
	
	\address{Universit\'e Paul Sabatier, Institut de Math\'ematiques de Toulouse, 118 Route de Narbonne, 31062 Toulouse, France}
	\email{hugo.parada@math.univ-toulouse.fr}
	
	%\subjclass[2020]{37L50, 93B05}
	\keywords{Star-shaped Network, KdV equation, Entire functions, Observability, Critical set}
	\thanks{$^*$Corresponding author: {jandeilson.santos@ufpe.br}}
	%hugo.parada@math.univ-toulouse.fr}
	\thanks{Capistrano–Filho was partially supported by CNPq grant numbers 301744/2025-4, 421573/2023-6, CAPES/COFECUB grant number 88887.879175/2023-00, and PROPG (UFPE) \textit{via} PROAP resources. Da Silva acknowledges support from CNPq. Parada was funded by the Labex CIMI postdoctoral fellowship under the grant agreement ANR-11-LABX-0040.}	
	
	\begin{abstract}		
	It is well known that the controllability property of partial differential equations (PDEs) is closely linked to the proof of an observability inequality for the adjoint system, which, sometimes, involves analyzing a spectral problem associated with the PDE under consideration. In this work, we study a series of spectral issues that ensure the controllability of the renowned Korteweg-de Vries equation on a star-graph. This investigation reduces to determining when certain functions, associated with this spectral problem, are entire. The novelty here lies in presenting this detailed analysis in the context of a star graph structure.
%In this article, we investigate the exact controllability of a system of $N$ linear Korteweg-de Vries (KdV) equations arranged in a star-shaped network, where each branch is a bounded interval. First, the system is controlled by combining Neumann and Dirichlet boundary conditions. We analyze the impact of the number of Neumann controls on the controllability, showing that for certain configurations, controllability is achieved for all $L > 0$. In contrast, in other cases, it only holds when $L$ is outside a specific critical set where controllability fails. The main novelty of the work is that a new study of the functions that generate the critical lengths is presented.
	\end{abstract}
	
	\date{\today}
	\maketitle
	
%	\tableofcontents
	
	\thispagestyle{empty}
	
	%***********************************************
	%\normalsize
	\section{Introduction}

In mathematics and physics, a quantum graph is a network structure consisting of vertices connected by edges, where each edge hosts a differential or pseudo-differential equation. When each edge is equipped with a natural metric, the graph is called a metric graph. A typical example is a power network, where power lines (edges) connect transformer stations (vertices); in this case, the differential equations could represent the voltage along each line, with boundary conditions at the vertices ensuring that the sum of currents across all edges at each vertex is zero.

Linus Pauling initially studied quantum graphs in the 1930s as models of free electrons in organic molecules. They also appear in various mathematical contexts, such as quantum chaos, waveguide studies, photonic crystals, and Anderson localization—the phenomenon where wave diffusion is absent in a disordered medium. Quantum graphs are also used as a limiting model for shrinking thin wires. They have become significant in mesoscopic physics as theoretical tools for understanding nanotechnology. Another simplified version of quantum graphs was introduced by Freedman \textit{et al.} in \cite{Freedman}.

Beyond solving the differential equations for specific applications, typical questions in quantum graph theory include well-posedness, controllability, and identifiability. For instance, controllability might involve determining the necessary inputs to bring a system to a desired state, such as ensuring sufficient power supply throughout a network. Identifiability could involve selecting measurement points, like pressure gauges in a water pipe network, to detect issues such as leaks.

\subsection{Dispersive systems on graph structure}
In recent years, the study of nonlinear dispersive models on metric graphs has gained significant attention from mathematicians, physicists, chemists, and engineers (see \cite{BK, BlaExn08, BurCas01, K, Mug15} for details and further references). A primary framework for modeling these phenomena is the star graph $\mathcal{G}$, a metric graph with \( N \) half-lines of the form \( (0, +\infty) \) that connect at a common vertex \( \nu=0 \). On each edge, a nonlinear equation, such as the nonlinear Schrödinger equation, is defined (see works by Adami \textit{et al.} \cite{AdaNoj14, AdaNoj15} and Angulo and Goloshchapova \cite{AngGol17a, AngGol17b}). Introducing nonlinearities in dispersive models of such networks creates a rich field for exploring soliton propagation and nonlinear dynamics. A key challenge in this analysis is the vertex, where the star graph may exhibit bifurcation or multi-bifurcation behavior, especially in more complex graph structures.

Other nonlinear dispersive systems on graphs also yield interesting results. For instance, regarding the well-posedness theory, Cavalvante \cite{Cav} studied the local well-posedness for the Cauchy problem of the Korteweg-de Vries equation on a metric star graph with a negative half-line and two positive half-lines meeting at a common vertex \( \nu=0 \) (the so-called \( \mathcal{Y} \)-junction). Another example is the Benjamin–Bona–Mahony (BBM) equation. Bona and Cascavel \cite{bona} established local well-posedness in the Sobolev space \( H^1 \), while Mugnolo and Rault \cite{Mugnolobbm} demonstrated the existence of traveling waves for the BBM equation on graphs. Using an alternative approach, Ammari and Crépeau \cite{AmCr1} derived results for well-posedness and stabilization of the BBM equation in a star-shaped network with bounded edges. It is important to point out that Mugnolo et al. \cite{MugNoSe} obtained a characterization of all boundary conditions under which the Airy-type evolution equation $u_t=\alpha u_{x x x}+\beta u_x$, for $\alpha \in \mathbb{R} \backslash\{0\}$ and $\beta \in \mathbb{R}$ on star graphs, generates contraction semigroups.

Notably, notable contributions have been made in the areas of control theory and inverse problems. Ignat \textit{et al.} \cite{Ignat2011} investigated the inverse problem for the heat and Schrödinger equations on a tree structure. Later, Baudouin and Yamamoto \cite{Baudouin} introduced a unified and simplified approach to the inverse problem of coefficient determination. Additionally, Ammari and Crépeau \cite{Ammari and Crepeau 2018}, Cerpa \textit{et al.} \cite{Cerpa, Cerpa1}, and Parada \textit{et al.} \cite{Parada2022a,Parada2022b} proved results on stabilization and boundary controllability for the KdV equation on star-shaped graphs.

\subsection{Background of control theory} Research on the control and stabilization of the KdV equation originated with the work of Russell and Zhang \cite{russell3, russell2, Russell1, zhang4}, who examined internal control of the KdV equation under periodic boundary conditions. Since then, extensive studies have focused on the control and stabilization of the KdV equation (see \cite{cerpatut, coron, GG1, Rosier, RZsurvey, zhang2} and references therein). 

Considering a bounded domain, the control problem was presented in a pioneering work of Rosier \cite{Rosier} that studied the following system 
\begin{equation}
\begin{cases}
\partial_{t}u+\partial_{x}u+u\partial_{x}u+\partial_{x}^{3}u=0& \text{ in } (0,L)\times(0,T),\\
u(0,t)=0,\text{ }u(L,t)=0,\text{ }\partial_{x}u(L,t)=g(t) &  \text{ in }(0,T),\\
u(x,0)=u_0(x) & \text{ in }(0,L),
\end{cases} \label{2}
\end{equation}
where the boundary value function $g(t)$ is considered as a control input. The author answered the following problem for the system \eqref{2}.

\vspace{0.2cm}
\noindent\textbf{Exact controllability problem:} \textit{Given $T>0$ and $u_0,u_T\in L^2(0,L)$, can one find  an appropriate control input $g(t)\in L^2(0,T)$ such that the corresponding solution $u(x,t)$ of \eqref{2} satisfies
\begin{equation}\label{ControlCon}
u(x,0)=u_0(x) \quad \text{and} \quad u(x,T)=u_T(x)?
\end{equation}}

Rosier \cite{Rosier} investigated boundary control of the KdV equation on the finite domain \((0, L)\) with Dirichlet boundary conditions \eqref{2}. He first analyzed the associated linear system:
\begin{equation}\label{2a}
\begin{cases}
\partial_{t}u + \partial_{x}u + \partial_{x}^{3}u = 0, & \text{in } (0, L) \times (0, T), \\
u(0, t)=0,\text{ } u(L,t)=0, \; \partial_{x}u(L, t) = g(t), & \text{in } (0, T), \\
u(x, 0) = u_0(x), & \text{in } (0, L),
\end{cases}
\end{equation}
and discovered the \textit{critical length phenomenon}, where the exact controllability of system \eqref{2a} depends on the domain length \( L \). Specifically, exact controllability in \(L^2(0, L)\) is achieved if and only if \( L \) does not belong to the set
\begin{equation}\label{critical}
\mathcal{N} := \left\{ \frac{2\pi}{\sqrt{3}} \sqrt{k^2 + kl + l^2} \, : \, k, l \in \mathbb{N}^* \right\}.
\end{equation}
Using a fixed-point argument, Rosier extended this controllability result from the linear to the nonlinear system for cases where \( L \notin \mathcal{N} \).

In the case where \( L \in \mathcal{N} \), Rosier demonstrated in \cite{Rosier} that the associated linear system \eqref{2a} is not controllable. Specifically, there exists a finite-dimensional subspace of \( L^2(0, L) \), denoted as \( \mathcal{M} = \mathcal{M}(L) \), that cannot be reached from \( 0 \) by the linear system. More precisely, for any nonzero state \( \psi \in \mathcal{M} \), if \( g \in L^2(0, T) \) and \( u \in C([0, T]; L^2(0, L)) \cap L^2(0, T; H^1(0, L)) \) satisfies \eqref{2a} with the initial condition \( u(\cdot, 0) = 0 \), then \( u(\cdot, T) \neq \psi \). A spatial domain \((0, L)\) is termed \textit{critical} for system \eqref{2a} if its length \( L \) belongs to \( \mathcal{N} \).

\vspace{0.2cm}

When the spatial domain $(0,L)$ is critical, one usually would not expect the corresponding nonlinear system \eqref{2} to be exactly controllable, as the linear system \eqref{2a} is not. It thus came as a surprise when Coron and Cr\'epeau showed  in \cite{coron} that the nonlinear system (\ref{2}) is  still locally exactly controllable  even though its spatial domain is critical with its length   $L=2k\pi$ and  $k\in\mathbb{N}^{*}$ satisfying \[\not \exists(m,n)\in\mathbb{N}^{*}\times\mathbb{N}^{*} \text{}\text{ with }\text{} m^2+mn+n^2=3k^2 \text{}\text{ and }\text{} m\neq n.\]
For those values of $L$, the unreachable space $\mathcal{M}$ of the associated linear system is a one-dimensional linear space generated by the function $1-\cos(x)$.  As for the other types of critical domains,   the nonlinear system (\ref{2})  was shown later by Cerpa \cite{cerpa}, and  Cerpa and Cr\'epeau in \cite{cerpa1} to be local, large time exactly controllable.

It is important to point out that if we change the control of position in the boundary condition of \eqref{2a}, for example 
\begin{equation*}
u(0,t)=h(t),\text{ }u(L,t)=0,\text{ }\partial_{x}u(L,t)=0 \quad \text{ in }(0,T)
\end{equation*}
or 
\begin{equation}\label{2c}
u(0,t)=0,\text{ }u(L,t)=f(t),\text{ }\partial_{x}u(L,t)=0\quad \text{ in }(0,T),
\end{equation}
then we can not explicitly characterize the critical sets for the KdV equation. However, for the boundary conditions in \eqref{2c}, it was established in \cite{GG1} the existence of a countable set of critical lengths $\mathcal{N}^*\subset (0,+\infty)$ which can be described as follows
	\begin{align}\label{critical-2}
		\mathcal{N}^*=\left\{L \in \mathbb{R}^+\backslash\ \{0\};
		\begin{array}{l}
			\exists(a,b)\in \mathbb{C}^2\text{ \ such that \ }ae^a=be^b=-(a+b)e^{-(a+b)}\\
			\text{ and }L^2=-\left(a^2+ab+b^2\right)
		\end{array}
		\right\}
	\end{align}
	For more details, we mention \cite{cerpatut,GG1}.

After $97'$, some authors tried to prove the critical set phenomenon for the KdV equation with some boundary condition, and we can cite, for example, \cite{GG1,CeRiZh}, and the references therein. However, for this set considered in these works, the authors were not allowed to explicitly characterize the set. Twenty years later, in \cite{Capistrano 2017}, another boundary condition was considered. The authors introduced the KdV equation with Neumann conditions. Capistrano-Filho \textit{et al.} investigated the following boundary control system
\begin{equation}
\begin{cases}
\partial_{t}u+\partial_{x}u+u\partial_{x}u+\partial_{x}^{3}u=0&  \text{ in } (0,L)\times(0,T),\\
\partial_{x}^{2}u(0,t)=0,\text{ }\partial_{x}^{2}u(L,t)=0,\text{ }\partial_{x}u(L,t)=h(t)& \text{ in }(0,T),\\
u(x,0)=u_0(x)& \text{ in }(0,L).
\end{cases}  \label{1}
\end{equation}
First, the authors studied the following linearized system associated with (\ref{1}),
\begin{equation}
\begin{cases}%
\partial_{t}u+ (1+\beta )\partial_{x}u+\partial_{x}^{3}u=0 & \text{ in } (0,L)\times(0,T),\\
\partial_{x}^{2}u(0,t)=0,\text{ }\partial_{x}^{2}u(L,t)=0,\text{ }\partial_{x}u(L,t)=h(t)& \text{ in }(0,T),\\
u(x,0)=u_0(x)& \text{ in }(0,L),
\end{cases} \label{linear}
\end{equation}
where $\beta $ is a given real constant. For any $\beta \ne -1$,  considering the following set
\begin{equation*}
\mathcal{R}_{\beta} :=\left\{  \frac{2\pi}{\sqrt{3(1+\beta)}}\sqrt{k^{2}+kl+l^{2}}
\,:k,\,l\,\in\mathbb{N}^{\ast}\right\}\cup\left\{\frac{k\pi}{\sqrt{\beta +1}}:k\in\mathbb{N}^{\ast}\right\}.
\end{equation*}
The authors showed the following results:
\begin{itemize}
\item[(i)] If $\beta \ne -1$, the linear system (\ref{linear}) is exactly controllable in the space $L^2 (0,L)$  if and only if the length L of the spatial domain $(0, L)$ does not belong to the set $\mathcal{R}_{\beta}$.
\item[(ii)] If $\beta =-1$, then the system (\ref{linear}) is not exactly controllable in the space $L^2 (0,L)$ for any $L>0$.
\end{itemize}

As done in \cite{Rosier}, the controllability of the nonlinear system \eqref{1} holds using a fixed point argument.  Moreover, the set $\mathcal{R}_{\beta}$ is completely characterized. Note that, when $\beta=0$,  $\mathcal{N} $ (see \eqref{critical}) is a proper subset of $\mathcal {R}_{0}$.  The linear system (\ref{linear}) has more critical length domains than that of the linear system (\ref{2a}). In the case of $\beta =-1$, every $L>0$ is critical for the system (\ref{linear}). By contrast, removing the term $\partial_{x}u$ from the equation in (\ref{2a}),  every $L>0$ is not critical for the system (\ref{2a}).
\\

To conclude this section, we also define the following set
\begin{align}\label{critical-3}
		\mathcal{N}^\dagger=\left\{L \in \mathbb{R}^+\backslash\ \{0\};
		\begin{array}{l}
			\exists(a,b)\in \mathbb{C}^2\text{ \ such that \ }a^{2}e^a=b^{2}e^b=(a+b)^{2}e^{-(a+b)}\\
			\text{ and }L^2=-\left(a^2+ab+b^2\right)
		\end{array}
		\right\}
	\end{align}
As we will prove in the Appendix~\ref{app: countable ciritcal}, the set $\mathcal{N}^\dagger$ is a countable, non-empty new critical set in the literature of controllability of KdV equations.
\subsection{Setting of problem and functional framework}
This work focuses on investigating the exact controllability of the linear KdV equation on a star-shaped network. \noindent The equation governing the dynamics is given by
\begin{equation*}
\begin{cases}
\partial_t u_j(t, x) + \partial_x u_j(t, x) + \partial_x^3 u_j(t, x) = 0, & t \in (0, T), \ x \in (0, l_j), \ j=1,\dots,N,\\
u_j(t, 0) = u_1(t, 0), & t \in (0, T), \ \forall j=2,\dots,N, \\
\displaystyle\sum_{j=1}^N \partial_x^2 u_j(t, 0) = -\alpha u_1(t, 0) + g_0(t), & t \in (0, T), \\
u_j(t, l_j) = 0, \quad \partial_x u_j(t, l_j) = g_j(t), & t \in (0, T), \ j=1,\dots,N, \\
u_j(0, x) = u_j^0(x), & x \in (0, l_j),
\end{cases}
\end{equation*}
where \(\alpha > \frac{N}{2}\). The conditions at the central node are motivated by previous studies \cite{Ammari and Crepeau 2018,Cav,Parada2022a,Parada2022b}. Following \cite{Parada2022b}, \(u_j\) represents the dimensionless, scaled deflection from the rest position, and \(v_j\) denotes the velocity along a branch \(j\) of long water waves. Thus, we have:
\begin{equation}\label{inta}
\begin{cases}
\partial_t u_j + \partial_x u_j + \partial_x^3 u_j + u_j \partial_x u_j = 0, & x \in (0, l_j), \ t \in (0, T), \ j=1,\dots,N, \\
v_j = u_j - \frac{1}{6} u_j^2 + 2 \partial_x^2 u_j, & x \in (0, l_j), \ t \in (0, T), \ j=1,\dots,N.
\end{cases}
\end{equation}
Assuming the water level at the central node is constant, and the net flux is zero, the following natural conditions arise
$$u_j(t, 0) = u_1(t, 0), \quad t \in (0, T), \ j=2,\dots,N,$$
and
$$\sum_{j=1}^N u_j(t, 0) v_j(t, 0) = 0, \quad t \in (0, T).$$
Linearizing \eqref{inta} around zero yields the following boundary conditions
$$u_j(t, 0) = u_1(t, 0), \quad t \in (0, T), \ j=2,\dots,N,$$
and
$$
\sum_{j=1}^N \partial_x^2 u_j(t, 0) = -\frac{N}{2} u_1(t, 0), \quad t \in (0, T).
$$
	
With this context, given $N\geq 2$, consider a star-shaped network $\mathcal{T}$ with $N$ edges described by intervals $I_j=(0,l_j)$, $l_j>0$ for $j=1,\dots,N$. Denoting by $e_1,...,e_N$ the edges of $\mathcal{T}$ one have $\mathcal{T}=\bigcup_{j=1}^Ne_j$.  To be precise, in this work, we will study the control properties for $N$ linear KdV equations posed on $\mathcal{T}$ with the following boundary conditions
\begin{align}\label{LkdV}
\begin{cases}
\partial_{t}u_{j}+\partial_{x}u_{j}+\partial_{x}^{3}u_{j}=0,&t \in (0,T),\ x \in (0,l_j),\ j=1,\dots,N,\\
u_j(t,0)=u_1(t,0),&t\in (0,T),\ j=2,\dots,N,\\
\displaystyle\sum_{j=1}^N\partial_{x}^{2}u_{j}(t,0)=-\alpha u_1(t,0),&t \in (0,T)\\	
u_j(t,l_j)=p_j(t),\ \ \ \ \ \partial_{x}u_{j}(t,l_j)=g_j(t),&t\in (0,T),\ j=1,\dots,N,\\
u_j(0,x)=u_j^0(x),&x \in (0,l_j),\ j=1,\dots,N,
\end{cases}
\end{align}
where $u=(u_1,...,u_N)$ stands the state of the system, $p_j$ and $g_j$ are the controls inputs and $\alpha>\frac{N}{2}$. This system was studied in \cite{Parada 2023}, where the null controllability was obtained using $2N-2$ controls.  

Our goal in this article is to prove the controllability of \eqref{LkdV} using a reduced number of boundary controls. In most of our results, we assume $l_j=L>0$ for all $j=1,\dots,N$ and we are interested in answering the following question:
	
	\vspace{0.2cm}
\noindent\textbf{New boundary conditions:} \textit{Are there other boundary conditions such that the controllability for the system \eqref{LkdV} holds?}
\vspace{0.2cm}

We will present here six new possibilities that ensure boundary controllability properties for the system \eqref{LkdV}, and that exhibit a relation of the length $l_j$ with a critical set, that is, in some appropriate set of boundary conditions, the controllability holds if and only if the length $l_j$ avoids specific values or for all $l_j>0$.  Given $0\leq m\leq N$, we put Neumann controls on the first $m$ edges and Dirichlet controls on the remaining $N-m$ edges. With this in mind, we will analyze the problem under the assumption $\alpha=N$ and $l_{j}=L$ for $j=1,\dots,N$ in six situations (with different results). The first one considers $N=2$ and $m=1$ as it is illustrated in Figure~\ref{fig:N=2}. We prove that the corresponding system \eqref{LkdV} is exactly controllable for any $L>0$. 		
	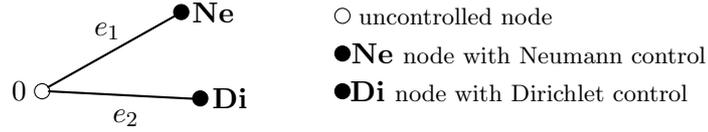
\begin{figure}[H]
		\centering
		\begin{tikzpicture}
			%\draw(0, 0) circle (2cm);
			\draw(0, 0) circle (1mm);
			%%%%%%%%%%%%%%%%% e1 %%%%%%%%%%%%%%%%%%%%% 
			\draw[line width=0.8pt](0.07, 0.05) -- (1.75, 1);
			\draw[fill=black](1.83, 1.05) circle (1mm);
			%%%%%%%%%%%%%%%%% e2 %%%%%%%%%%%%%%%%%%%%
			\draw[line width=0.8pt](0.08, 0) -- (2, -0.1);
			\draw[fill=black](2.08, -0.095) circle (1mm);
			%%%%%%%%%%%%%%%%%%%%%%%%%%%%%%%%%%%%%%%%%%%%%%%%%%%%%%%
			\draw node (no1) at (-0.3,0) {$0$};
			\draw node (no1) at (0.86, 0.8) {$e_1$};
			\draw node (no1) at (1.1, -0.35) {$e_2$};
			\draw node (no1) at (2.25, 1.05) {\textbf{Ne}};
			\draw node (no1) at (2.47, -0.095){\textbf{Di}};
			%%%%%%%%%%%%%%%%%%%%%%%%%%%%%%%%%%%%%%%%%%%%%%%%%%%%%%%
			\draw(3.95,1) circle (1mm);
			\draw node (no1) at (6.4,1) {{\footnotesize uncontrolled node}\ \ \ \ \ \ \ \ \ \ \ \ \ \ \ \ };
			\draw[fill=black](3.95,0.5) circle (1mm);
			\draw node (no1) at (6.4,0.5) {\textbf{Ne} {\footnotesize node with Neumann control}};
			\draw[fill=black](3.95,0) circle (1mm);
			\draw node (no1) at (6.4,0) {\textbf{Di} {\footnotesize node with Dirichlet control}\ \ \ };
		\end{tikzpicture}
		\caption{Network with $2$ edges and mixed controls.}
        \label{fig:N=2}
	\end{figure}
The second situation considers $N\geq 3$, $m=1$, and it is illustrated in Figure~\ref{fig:m=1}. In this case, we can prove the exact controllability to the corresponding system \eqref{LkdV} when $L\notin\mathcal{N}^*$, where $\mathcal{N}^*$ is defined by \eqref{critical-2}.
\begin{figure}[H]
	\centering
	\begin{tikzpicture}
		%\draw(0, 0) circle (2cm);
		\draw(0, 0) circle (1mm);
		%%%%%%%%%%%%%%%%% e1 %%%%%%%%%%%%%%%%%%%%%%
		\draw[line width=0.8pt](0.02, 0.09) -- (0.5, 1.95);
		\draw[fill=black](0.53, 2.04) circle (1mm);
		%%%%%%%%%%%%%%%%% e2 %%%%%%%%%%%%%%%%%%%%% 
		\draw[line width=0.8pt](0.07, 0.07) -- (1.33, 1.5);
		\draw[fill=black](1.39, 1.57) circle (1mm);
		%%%%%%%%%%%%%%%%% e3 %%%%%%%%%%%%%%%%%%%%%%
		\draw[line width=0.8pt](0.1, 0.04) -- (1.85, 0.78);
		\draw[fill=black](1.93, 0.815) circle (1mm);
		%%%%%%%%%%%%%%%%% eN-1 %%%%%%%%%%%%%%%%%%%%
		\draw[line width=0.8pt](0.08, -0.04) -- (1.75, -1);
		\draw[fill=black](1.825, -1.045) circle (1mm);
		%%%%%%%%%%%%%%%%% eN %%%%%%%%%%%%%%%%%%%%%%
		\draw[line width=0.8pt](0.05, -0.09) -- (1.02, -1.73);
		\draw[fill=black](1.07, -1.8) circle (1mm);
		%%%%%%%%%%%%%%%%%%%%%%%%%%%%%%%%%%%%%%%%%%%%%%%%%%%%%%%
		\draw[fill=black](1.45, 0.2) circle (0.3mm);
		\draw[fill=black](1.45, 0.05) circle (0.3mm);
		\draw[fill=black](1.45, -0.1) circle (0.3mm);
		%%%%%%%%%%%%%%%%%%%%%%%%%%%%%%%%%%%%%%%%%%%%%%%%%%%%%%%
		\draw node (no1) at (-0.3,0) {$0$};
		\draw node (no1) at (0.19, 1.7) {$e_1$};
		\draw node (no1) at (0.86, 1.35) {$e_2$};
		\draw node (no1) at (1.35, 0.8) {$e_3$};
		\draw node (no1) at (1.7, -0.6) {$e_{_{N-1}}$};
		\draw node (no1) at (1.06, -1.25) {$e_{_N}$};
		\draw node (no1) at (0.95, 2.05) {\textbf{Ne}};
		\draw node (no1) at (1.79, 1.57) {\textbf{Di}};
		\draw node (no1) at (2.3, 0.815) {\textbf{Di}};
		\draw node (no1) at (2.2, -1.045) {\textbf{Di}};
		\draw node (no1) at (1.46, -1.8) {\textbf{Di}};
		%%%%%%%%%%%%%%%%%%%%%%%%%%%%%%%%%%%%%%%%%%%%%%%%%%%%%%%
		\draw(3.95,-0.5) circle (1mm);
		\draw node (no1) at (6.4,-0.5) {{\footnotesize uncontrolled node}\ \ \ \ \ \ \ \ \ \ \ \ \ \ \ \ };
		\draw[fill=black](3.95,-1) circle (1mm);
		\draw node (no1) at (6.4,-1) {\textbf{Ne} {\footnotesize node with Neumann control}};
		\draw[fill=black](3.95,-1.5) circle (1mm);
		\draw node (no1) at (6.4,-1.5) {\textbf{Di} {\footnotesize node with Dirichlet control}\ \ \ };
	\end{tikzpicture}
	\caption{Network with $N$ edges for $m=1$.}	
    \label{fig:m=1}
\end{figure}
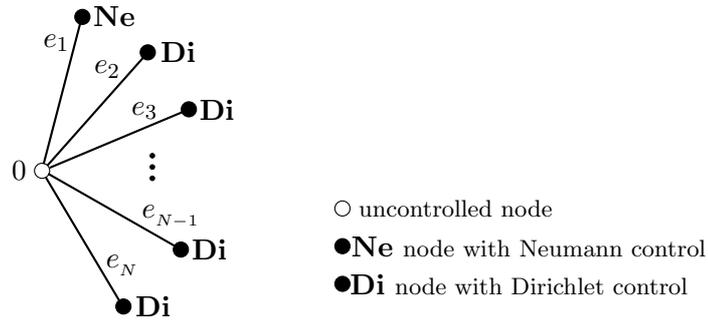
In the third case, we consider $N\geq 3$ and $m=N-1$, as illustrated in Figure~\ref{fig:m=N-1}. For this situation, the exact controllability to the corresponding system \eqref{LkdV} holds if and only if $L\notin\mathcal{N}$, where $\mathcal{N}$ is defined by \eqref{critical}. 
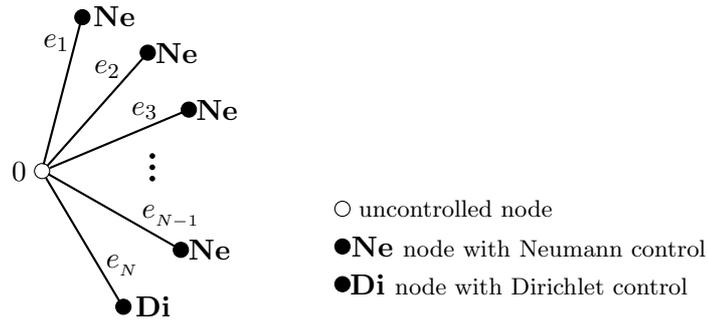
\begin{figure}[H]
		\centering
		\begin{tikzpicture}
			%\draw(0, 0) circle (2cm);
			\draw(0, 0) circle (1mm);
			%%%%%%%%%%%%%%%%% e1 %%%%%%%%%%%%%%%%%%%%%%
			\draw[line width=0.8pt](0.02, 0.09) -- (0.5, 1.95);
			\draw[fill=black](0.53, 2.04) circle (1mm);
			%%%%%%%%%%%%%%%%% e2 %%%%%%%%%%%%%%%%%%%%% 
			\draw[line width=0.8pt](0.07, 0.07) -- (1.33, 1.5);
			\draw[fill=black](1.39, 1.57) circle (1mm);
			%%%%%%%%%%%%%%%%% e3 %%%%%%%%%%%%%%%%%%%%%%
			\draw[line width=0.8pt](0.1, 0.04) -- (1.85, 0.78);
			\draw[fill=black](1.93, 0.815) circle (1mm);
			%%%%%%%%%%%%%%%%% eN-1 %%%%%%%%%%%%%%%%%%%%
			\draw[line width=0.8pt](0.08, -0.04) -- (1.75, -1);
			\draw[fill=black](1.825, -1.045) circle (1mm);
			%%%%%%%%%%%%%%%%% eN %%%%%%%%%%%%%%%%%%%%%%
			\draw[line width=0.8pt](0.05, -0.09) -- (1.02, -1.73);
			\draw[fill=black](1.07, -1.8) circle (1mm);
			%%%%%%%%%%%%%%%%%%%%%%%%%%%%%%%%%%%%%%%%%%%%%%%%%%%%%%%
			\draw[fill=black](1.45, 0.2) circle (0.3mm);
			\draw[fill=black](1.45, 0.05) circle (0.3mm);
			\draw[fill=black](1.45, -0.1) circle (0.3mm);
			%%%%%%%%%%%%%%%%%%%%%%%%%%%%%%%%%%%%%%%%%%%%%%%%%%%%%%%
			\draw node (no1) at (-0.3,0) {$0$};
			\draw node (no1) at (0.19, 1.7) {$e_1$};
			\draw node (no1) at (0.86, 1.35) {$e_2$};
			\draw node (no1) at (1.35, 0.8) {$e_3$};
			\draw node (no1) at (1.7, -0.6) {$e_{_{N-1}}$};
			\draw node (no1) at (1.06, -1.25) {$e_{_N}$};
			\draw node (no1) at (0.95, 2.05) {\textbf{Ne}};
			\draw node (no1) at (1.77, 1.57) {\textbf{Ne}};
			\draw node (no1) at (2.3, 0.815) {\textbf{Ne}};
			\draw node (no1) at (2.2, -1.045) {\textbf{Ne}};
			\draw node (no1) at (1.46, -1.8) {\textbf{Di}};
			%%%%%%%%%%%%%%%%%%%%%%%%%%%%%%%%%%%%%%%%%%%%%%%%%%%%%%%
			\draw(3.95,-0.5) circle (1mm);
			\draw node (no1) at (6.4,-0.5) {{\footnotesize uncontrolled node}\ \ \ \ \ \ \ \ \ \ \ \ \ \ \ \ };
			\draw[fill=black](3.95,-1) circle (1mm);
			\draw node (no1) at (6.4,-1) {\textbf{Ne} {\footnotesize node with Neumann control}};
			\draw[fill=black](3.95,-1.5) circle (1mm);
			\draw node (no1) at (6.4,-1.5) {\textbf{Di} {\footnotesize node with Dirichlet control}\ \ \ };
		\end{tikzpicture}
		\caption{Network with $N$ edges for $m=N-1$.}	
        \label{fig:m=N-1}
	\end{figure}
In the fourth case, we study the problem under the configuration $N>3$ and $1<m<N-1$. Figure~\ref{fig:m=2} illustrates this situation for $m=2$. In this case, we can prove the exact controllability to the corresponding system \eqref{LkdV} when $L\notin\mathcal{N}\cup \mathcal{N}^*$.
\begin{figure}[H]
	\centering
	\begin{tikzpicture}
		%\draw(0, 0) circle (2cm);
		\draw(0, 0) circle (1mm);
		%%%%%%%%%%%%%%%%% e1 %%%%%%%%%%%%%%%%%%%%%%
		\draw[line width=0.8pt](0.02, 0.09) -- (0.5, 1.95);
		\draw[fill=black](0.53, 2.04) circle (1mm);
		%%%%%%%%%%%%%%%%% e2 %%%%%%%%%%%%%%%%%%%%% 
		\draw[line width=0.8pt](0.07, 0.07) -- (1.33, 1.5);
		\draw[fill=black](1.39, 1.57) circle (1mm);
		%%%%%%%%%%%%%%%%% e3 %%%%%%%%%%%%%%%%%%%%%%
		\draw[line width=0.8pt](0.1, 0.04) -- (1.85, 0.78);
		\draw[fill=black](1.93, 0.815) circle (1mm);
		%%%%%%%%%%%%%%%%% eN-1 %%%%%%%%%%%%%%%%%%%%
		\draw[line width=0.8pt](0.08, -0.04) -- (1.75, -1);
		\draw[fill=black](1.825, -1.045) circle (1mm);
		%%%%%%%%%%%%%%%%% eN %%%%%%%%%%%%%%%%%%%%%%
		\draw[line width=0.8pt](0.05, -0.09) -- (1.02, -1.73);
		\draw[fill=black](1.07, -1.8) circle (1mm);
		%%%%%%%%%%%%%%%%%%%%%%%%%%%%%%%%%%%%%%%%%%%%%%%%%%%%%%%
		\draw[fill=black](1.45, 0.2) circle (0.3mm);
		\draw[fill=black](1.45, 0.05) circle (0.3mm);
		\draw[fill=black](1.45, -0.1) circle (0.3mm);
		%%%%%%%%%%%%%%%%%%%%%%%%%%%%%%%%%%%%%%%%%%%%%%%%%%%%%%%
		\draw node (no1) at (-0.3,0) {$0$};
		\draw node (no1) at (0.19, 1.7) {$e_1$};
		\draw node (no1) at (0.86, 1.35) {$e_2$};
		\draw node (no1) at (1.35, 0.8) {$e_3$};
		\draw node (no1) at (1.7, -0.6) {$e_{_{N-1}}$};
		\draw node (no1) at (1.06, -1.25) {$e_{_N}$};
		\draw node (no1) at (0.95, 2.05) {\textbf{Ne}};
		\draw node (no1) at (1.79, 1.57) {\textbf{Ne}};
		\draw node (no1) at (2.3, 0.815) {\textbf{Di}};
		\draw node (no1) at (2.2, -1.045) {\textbf{Di}};
		\draw node (no1) at (1.46, -1.8) {\textbf{Di}};
		%%%%%%%%%%%%%%%%%%%%%%%%%%%%%%%%%%%%%%%%%%%%%%%%%%%%%%%
		\draw(3.95,-0.5) circle (1mm);
		\draw node (no1) at (6.4,-0.5) {{\footnotesize uncontrolled node}\ \ \ \ \ \ \ \ \ \ \ \ \ \ \ \ };
		\draw[fill=black](3.95,-1) circle (1mm);
		\draw node (no1) at (6.4,-1) {\textbf{Ne} {\footnotesize node with Neumann control}};
		\draw[fill=black](3.95,-1.5) circle (1mm);
		\draw node (no1) at (6.4,-1.5) {\textbf{Di} {\footnotesize node with Dirichlet control}\ \ \ };
	\end{tikzpicture}
	\caption{Network with $N$ edges for $m=2$.}	
    \label{fig:m=2}
\end{figure}
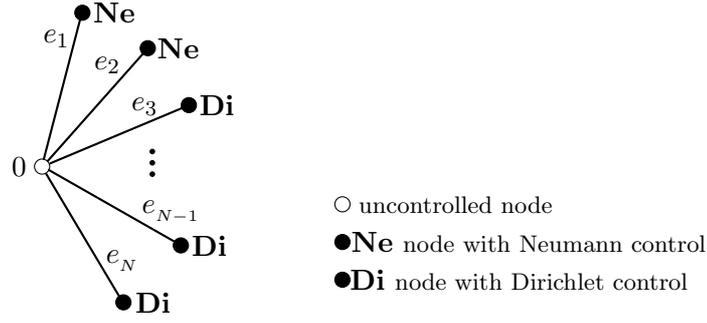

In the full Neumann case, namely, $N\geq 2$ and $m=N$, illustrated in Figure~\ref{fig:fN}, we can prove the controllability if $L\notin\mathcal{N}\cup\mathcal{N}^{*}$.

\begin{figure}[H]
	\centering
	\begin{tikzpicture}
		%\draw(0, 0) circle (2cm);
		\draw(0, 0) circle (1mm);
		%%%%%%%%%%%%%%%%% e1 %%%%%%%%%%%%%%%%%%%%%%
		\draw[line width=0.8pt](0.02, 0.09) -- (0.5, 1.95);
		\draw[fill=black](0.53, 2.04) circle (1mm);
		%%%%%%%%%%%%%%%%% e2 %%%%%%%%%%%%%%%%%%%%% 
		\draw[line width=0.8pt](0.07, 0.07) -- (1.33, 1.5);
		\draw[fill=black](1.39, 1.57) circle (1mm);
		%%%%%%%%%%%%%%%%% e3 %%%%%%%%%%%%%%%%%%%%%%
		\draw[line width=0.8pt](0.1, 0.04) -- (1.85, 0.78);
		\draw[fill=black](1.93, 0.815) circle (1mm);
		%%%%%%%%%%%%%%%%% eN-1 %%%%%%%%%%%%%%%%%%%%
		\draw[line width=0.8pt](0.08, -0.04) -- (1.75, -1);
		\draw[fill=black](1.825, -1.045) circle (1mm);
		%%%%%%%%%%%%%%%%% eN %%%%%%%%%%%%%%%%%%%%%%
		\draw[line width=0.8pt](0.05, -0.09) -- (1.02, -1.73);
		\draw[fill=black](1.07, -1.8) circle (1mm);
		%%%%%%%%%%%%%%%%%%%%%%%%%%%%%%%%%%%%%%%%%%%%%%%%%%%%%%%
		\draw[fill=black](1.45, 0.2) circle (0.3mm);
		\draw[fill=black](1.45, 0.05) circle (0.3mm);
		\draw[fill=black](1.45, -0.1) circle (0.3mm);
		%%%%%%%%%%%%%%%%%%%%%%%%%%%%%%%%%%%%%%%%%%%%%%%%%%%%%%%
		\draw node (no1) at (-0.3,0) {$0$};
		\draw node (no1) at (0.19, 1.7) {$e_1$};
		\draw node (no1) at (0.86, 1.35) {$e_2$};
		\draw node (no1) at (1.35, 0.8) {$e_3$};
		\draw node (no1) at (1.7, -0.6) {$e_{_{N-1}}$};
		\draw node (no1) at (1.06, -1.25) {$e_{_N}$};
		\draw node (no1) at (0.95, 2.05) {\textbf{Ne}};
		\draw node (no1) at (1.79, 1.57) {\textbf{Ne}};
		\draw node (no1) at (2.3, 0.815) {\textbf{Ne}};
		\draw node (no1) at (2.2, -1.045) {\textbf{Ne}};
		\draw node (no1) at (1.46, -1.8) {\textbf{Ne}};
		%%%%%%%%%%%%%%%%%%%%%%%%%%%%%%%%%%%%%%%%%%%%%%%%%%%%%%%
		\draw(3.95,-0.5) circle (1mm);
		\draw node (no1) at (6.4,-0.5) {{\footnotesize uncontrolled node}\ \ \ \ \ \ \ \ \ \ \ \ \ \ \ \ };
		\draw[fill=black](3.95,-1) circle (1mm);
		\draw node (no1) at (6.4,-1) {\textbf{Ne} {\footnotesize node with Neumann control}};
	\end{tikzpicture}
	\caption{Network with $N$ edges with control on Neumann condition.}	
    \label{fig:fN}
\end{figure}

Finally, in the full Dirichlet case, that is, $N\geq 2$ and $m=0$, illustrated in Figure~\ref{fig:fD}, we can prove the controllability if $L\notin\mathcal{N}^{*}\cup\mathcal{N}^{\dagger}$, where $\mathcal{N}^{\dagger}$ is defined by \eqref{critical-3}.

\begin{figure}[H]
	\centering
	\begin{tikzpicture}
		%\draw(0, 0) circle (2cm);
		\draw(0, 0) circle (1mm);
		%%%%%%%%%%%%%%%%% e1 %%%%%%%%%%%%%%%%%%%%%%
		\draw[line width=0.8pt](0.02, 0.09) -- (0.5, 1.95);
		\draw[fill=black](0.53, 2.04) circle (1mm);
		%%%%%%%%%%%%%%%%% e2 %%%%%%%%%%%%%%%%%%%%% 
		\draw[line width=0.8pt](0.07, 0.07) -- (1.33, 1.5);
		\draw[fill=black](1.39, 1.57) circle (1mm);
		%%%%%%%%%%%%%%%%% e3 %%%%%%%%%%%%%%%%%%%%%%
		\draw[line width=0.8pt](0.1, 0.04) -- (1.85, 0.78);
		\draw[fill=black](1.93, 0.815) circle (1mm);
		%%%%%%%%%%%%%%%%% eN-1 %%%%%%%%%%%%%%%%%%%%
		\draw[line width=0.8pt](0.08, -0.04) -- (1.75, -1);
		\draw[fill=black](1.825, -1.045) circle (1mm);
		%%%%%%%%%%%%%%%%% eN %%%%%%%%%%%%%%%%%%%%%%
		\draw[line width=0.8pt](0.05, -0.09) -- (1.02, -1.73);
		\draw[fill=black](1.07, -1.8) circle (1mm);
		%%%%%%%%%%%%%%%%%%%%%%%%%%%%%%%%%%%%%%%%%%%%%%%%%%%%%%%
		\draw[fill=black](1.45, 0.2) circle (0.3mm);
		\draw[fill=black](1.45, 0.05) circle (0.3mm);
		\draw[fill=black](1.45, -0.1) circle (0.3mm);
		%%%%%%%%%%%%%%%%%%%%%%%%%%%%%%%%%%%%%%%%%%%%%%%%%%%%%%%
		\draw node (no1) at (-0.3,0) {$0$};
		\draw node (no1) at (0.19, 1.7) {$e_1$};
		\draw node (no1) at (0.86, 1.35) {$e_2$};
		\draw node (no1) at (1.35, 0.8) {$e_3$};
		\draw node (no1) at (1.7, -0.6) {$e_{_{N-1}}$};
		\draw node (no1) at (1.06, -1.25) {$e_{_N}$};
		\draw node (no1) at (0.95, 2.05) {\textbf{Di}};
		\draw node (no1) at (1.79, 1.57) {\textbf{Di}};
		\draw node (no1) at (2.3, 0.815) {\textbf{Di}};
		\draw node (no1) at (2.2, -1.045) {\textbf{Di}};
		\draw node (no1) at (1.46, -1.8) {\textbf{Di}};
		%%%%%%%%%%%%%%%%%%%%%%%%%%%%%%%%%%%%%%%%%%%%%%%%%%%%%%%
		\draw(3.95,-0.5) circle (1mm);
		\draw node (no1) at (6.4,-0.5) {{\footnotesize uncontrolled node}\ \ \ \ \ \ \ \ \ \ \ \ \ \ \ \ };
		\draw[fill=black](3.95,-1) circle (1mm);
		\draw node (no1) at (6.4,-1) {\textbf{Di} {\footnotesize node with Dirichlet control}};
	\end{tikzpicture}
	\caption{Network with $N$ edges with control on Neumann condition.}	
    \label{fig:fD}
\end{figure}

Before presenting our main results, we introduce some notations used throughout this article. From now on, we will consider:
	 \begin{itemize}
	 \item[i.] The vector $u$ is given by $$u=(u_1,...,u_N) \in \mathbb{L}^2(\mathcal{T})=\displaystyle\prod_{j=1}^NL^2(0,l_j)$$
	and final and initial data is \[u^0=(u^0_1,...,u^0_N) \in \mathbb{L}^2(\mathcal{T})\quad\text{and}\quad u^T=(u^T_1,...,u^T_N) \in \mathbb{L}^2(\mathcal{T}).\]
	 \item[ii.] The inner product in $\mathbb{L}^2(\mathcal{T})$ will be given by  $$\left(u,z\right)_{\mathbb{L}^2(\mathcal{T})}=\sum_{j=1}^N\int_0^{l_j}u_jz_jdx,\ \ u,z\in \mathbb{L}^2(\mathcal{T}).$$
	Moreover,  $$\mathbb{H}^s(\mathcal{T})=\displaystyle\prod_{j=1}^NH^s(0,l_j), \ \ s \in \mathbb{R}.$$
	 \end{itemize}
	\begin{itemize}
		\item[iii.] Define also the following spaces:
		$$\mathbb{H}_0^k(\mathcal{T})=\displaystyle\prod_{j=1}^{N}H_0^k(0,l_j),\ k\in \mathbb{N}$$
		and
		$$H_r^k(0,l_j)=\left\{v \in H^k(0,l_j),v^{(i-1)}(l_j)=0,\ 1\leq i\leq k\right\},\ k\in \mathbb{N},$$ 
		where $v^{(m)}=\frac{dv}{dx^m}$ and the index $r$ is related to the null right boundary conditions. In addition, 
		$$\mathbb{H}^k_r(\mathcal{T})=\prod_{j=1}^{N}H_r^k(0,l_j)\quad \text{and} \quad \|u\|_{\mathbb{H}^k_r(\mathcal{T})}^2=\sum_{j=1}^{n}\|u_j\|_{H^k(0,l_j)}\ k \in \mathbb{N}.$$
		\item[iv.] Consider the following characterization: 
		$$H_r^{-1}(0,l_j)=\left(H_r^{1}(0,l_j)\right)'$$ 
		as the dual space of $H_r(0,l_j)$ with respect to the pivot space, $L^2(0,l_j)$ and $\mathbb{H}_r^{-1}$ denotes the cartesian product of $H_r^{-1}(0,l_j)$.
		\item[v.] Let
		$$\mathbb{H}^k_e(\mathcal{T})=\left\{u=(u_1,...,u_N)\in \mathbb{H}^k_r(\mathcal{T});\ u_1(0)=u_j(0),\ j=1,\dots,N\right\}\,, \  k \in \mathbb{N},$$
		and
$$\mathbb{B}=C([0,T],\mathbb{L}^2(\mathcal{T}))\cap L^2(0,T,\mathbb{H}^1_e(\mathcal{T})),$$
with the norm 
$$\displaystyle\|u\|_{\mathbb{B}}:=\|u\|_{C([0,T],\mathbb{L}^2(\mathcal{T}))}+\|u\|_{L^2(0,T,\mathbb{H}^1_e(\mathcal{T}))}=\max_{t\in [0,T]}\|u\|_{\mathbb{L}^2(\mathcal{T})}+\left(\int_0^T\|u(t,\cdot)\|_{\mathbb{H}^1_e}^2dt\right)^\frac{1}{2}.$$
\item[vi.] Finally, we need to introduce spaces that are paramount for our main results.  For $m=0,\dots,N$, consider 
\begin{align}\label{xx}
	X_m=\prod_{j=1}^{m}L^2(0,l_j)\times \prod_{i=m+1}^{N}H_0^1(0,l_{i}), \qquad X_0=\prod_{i=1}^{N}H_0^1(0,l_{i}),\qquad X_N=\prod_{j=1}^{N}L^2(0,l_j),
\end{align}
and 
\begin{align}\label{yy}
Y_m=\prod_{j=1}^{m}H^1(0,l_j)\times \prod_{i=m+1}^{N}H^2(0,l_{i}), \qquad Y_0=\prod_{i=1}^{N}H^2(0,l_{i}),\qquad Y_N=\prod_{i=1}^{N}H^{1}(0,l_j).
\end{align}
endowed with the usual Hilbertian norms.	
	\end{itemize}

\subsection{Main result and heuristic} With these previous notations in hand, let us now present our first main result. Given an integer number $N\geq 2$ and $m=0,\dots,N$, this work takes in consideration the following control configuration:
	\begin{align*}
	\begin{cases}
		p_j=0,\ j=1,\dots,m \text{ and }g_j=0,\ j=m+1,\dots,N,&\text{ if }1<m<N,\\
		g_j=0,\ j=1,...,N, &\text{ if }m=0,\\
		p_j=0,\ j=1,...,N,&\text{ if }m=N,
		\end{cases}
\end{align*}
resulting in the following control system
\begin{align}\label{LkdV-2}
\begin{cases}
\partial_{t}u_{j}+\partial_{x}u_{j}+\partial_{x}^{3}u_{j}=0,&t \in (0,T),\ x \in (0,l_j),\ j=1,\dots,N,\\
u_j(t,0)=u_1(t,0),&t\in (0,T),\ j=2,\dots,N,\\
\displaystyle\sum_{j=1}^N\partial_{x}^{2}u_{j}(t,0)=-\alpha u_1(t,0),&t \in (0,T),\\		
u_j(t,l_j)=0,\ \ \ \ \ \ \ \ \ \ \partial_{x}u_{j}(t,l_j)=g_j(t),&t\in (0,T),\ j=1,\dots,m,\\
u_j(t,l_j)=p_j(t),\ \ \ \ \  \partial_{x}u_{j}(t,l_j)=0,&t\in (0,T),\ j=m+1,\dots,N,\\
u_j(0,x)=u_j^0(x),&x \in (0,l_j),\ j=1,\dots,N.
\end{cases}
\end{align}
For $m=0$, we omit the fourth equation from the system above, whereas for $m=N$, we omit the fifth equation. Note that the adjoint system associated with \eqref{LkdV-2} is given by
\begin{align}\label{Adj}
\begin{cases}
-\partial_{t}\varphi_{j}-\partial_{x}\varphi_{j}-\partial_{x}^{3}\varphi_{j}=0,&t \in (0,T),\ x \in (0,l_j),\ j=1,\dots,N,\\
\varphi_j(t,0)=\varphi_1(t,0),&t\in (0,T),\ j=2,\dots,N,\\
\displaystyle\sum_{j=1}^N\partial_{x}^{2}\varphi_{j}(t,0)=(\alpha-N)\varphi_1(t,0),&t \in (0,T)\\				
\varphi_j(t,l_j)=\partial_{x}\varphi_{j}(t,0)=0,&t\in (0,T),\ j=1,\dots,N,\\
\varphi_j(T,x)=\varphi_j^T(x),&x \in (0,l_j),\ j=1,\dots,N.
\end{cases}
\end{align}
Thus, as usual in the control theory (see, for instance, \cite{Lions}), the controllability result for the system \eqref{LkdV-2} is equivalent to proving the following \textit{observability inequality}
\[\|\varphi^T\|_{X_m}^2\leq \tilde{C}\left(\sum_{j=1}^{m}\|\partial_{x}\varphi_{j}(\cdot,l_j)\|_{L^2(0,T)}^2+\sum_{j=m+1}^{N}\|\partial_{x}^{2}\varphi_{j}(\cdot,l_j)\|_{L^2(0,T)}^2\right),\ \forall\varphi^T \in X_m,\]
for the solution of the adjoint system \eqref{Adj}. The first main result of this work is given below and mixes the Neumann and Dirichlet boundary conditions (see Figs. 1, 2, 3, 4, 5, and 6). 

\begin{theorem}\label{main-a}
Let $T>0$ and $u^0,u^T\in \mathbb{L}^2(\mathcal{T})$. Consider $\alpha=N$, $m=0,\dots,N$ and $l_j=L$ for $j=1,\dots,N$.
\begin{enumerate}
\item If $N=2$, then for any $L>0$, there exist controls $(g_1,0)\in [L^2(0,T)]^2$ and $(0,p_2)\in [L^2(0,T)]^2$ such that the unique solution $u$ of \eqref{LkdV-2} satisfies \eqref{ControlCon}.	
\item If $N\geq 3$ and $m=1$ then, there exist controls $(g_1,0,...,0)\in [L^2(0,T)]^N$ and $(0,p_2,...,p_N)\in [L^2(0,T)]^N$ such that the unique solution $u$ of \eqref{LkdV-2} satisfies \eqref{ControlCon}, if and only if $L\notin\mathcal{N}^*$.	
\item If $N\geq 3$ and $m=N-1$ then, there exist controls $(g_1,...,g_{N-1},0)\in [L^2(0,T)]^N$ and $(0,...,0,p_N)\in [L^2(0,T)]^N$ such that the unique solution $u$ of \eqref{LkdV-2} satisfies \eqref{ControlCon}, if and only if $L\notin\mathcal{N}$.
\item If $N>3$ and $1<m<N-1$ then, there exist controls $(g_1,...,g_m,0,...,0)\in [L^2(0,T)]^N$ and $(0,...,0,p_{m+1},...,p_N)\in [L^2(0,T)]^N$ such that the unique solution $u$ of \eqref{LkdV-2} satisfies \eqref{ControlCon}, if and only if $L\notin\mathcal{N}\cup\mathcal{N}^{*}$.
\item If, $N\geq2$, then there exist controls $(g_1,...,g_N)\in [L^2(0,T)]^N$ such that the unique solution $u$ of \eqref{LkdV-2} satisfies \eqref{ControlCon}, if and only if $L\notin\mathcal{N}\cup\mathcal{N}^{*}$.
\item If, $N\geq2$, then there exist controls $(p_1,...,p_N)\in [L^2(0,T)]^N$ such that the unique solution $u$ of \eqref{LkdV-2} satisfies \eqref{ControlCon}, if and only if $L\notin\mathcal{N}^{*}\cup\mathcal{N}^{\dagger}$.
\end{enumerate}
\end{theorem}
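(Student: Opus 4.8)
The plan is to treat all six items by the Hilbert Uniqueness Method, reducing the exact controllability of \eqref{LkdV-2} to the observability inequality for the adjoint system \eqref{Adj} recorded just above the statement. First I would record the direct trace (hidden-regularity) estimates that make the observed quantities $\partial_x\varphi_j(\cdot,L)$ (for $j\le m$) and $\partial_x^2\varphi_j(\cdot,L)$ (for $j>m$) belong to $L^2(0,T)$, together with a multiplier identity yielding observability modulo a compact lower-order term. A Lions-type compactness--uniqueness argument then reduces the observability inequality to the unique continuation property: every solution of \eqref{Adj} whose observed traces vanish on $(0,T)$ is identically zero. Since the space of non-observed solutions is finite dimensional and invariant under the flow, it contains an eigenvector of the spatial operator; hence the heart of the matter is the stationary problem of deciding, for a given $L$, whether there exist $\lambda\in\mathbb{C}$ and a nonzero $\psi=(\psi_1,\dots,\psi_N)$ with $\psi_j'''+\psi_j'+\lambda\psi_j=0$ on $(0,L)$, satisfying $\psi_j(0)=\psi_1(0)$, $\sum_{j=1}^N\psi_j''(0)=0$ (as $\alpha=N$), $\psi_j(L)=\psi_j'(0)=0$, and the overdetermination $\psi_j'(L)=0$ for $j\le m$, $\psi_j''(L)=0$ for $j>m$.

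Next I would use that every edge carries the same equation on the same interval to split this spectral problem into symmetric and antisymmetric modes. Writing $\psi_j=\sum_{i=1}^3 c^j_i e^{r_i x}$ with $r_1,r_2,r_3$ the roots of $r^3+r+\lambda=0$ (so $r_1+r_2+r_3=0$ and $r_1r_2+r_2r_3+r_3r_1=1$), the antisymmetric modes are those with $\psi_1(0)=0$: each edge then solves the three-condition problem $\psi(0)=\psi'(0)=\psi(L)=0$, which is nontrivial only when $\lambda$ lies in its discrete spectrum, with a one-dimensional eigenspace spanned by some $\phi_\lambda$, so $\psi_j=d_j\phi_\lambda$ and the node condition forces $\sum_j d_j=0$ (provided $\phi_\lambda''(0)\ne0$). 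A nontrivial antisymmetric mode thus exists precisely when $\phi_\lambda'(L)=0$ on at least two Neumann edges, giving the single-edge problem $\psi(0)=\psi'(0)=\psi(L)=\psi'(L)=0$ (call it (I)), or when $\phi_\lambda''(L)=0$ on at least two Dirichlet edges, giving $\psi(0)=\psi'(0)=\psi(L)=\psi''(L)=0$ (call it (II)). The symmetric modes have $\psi_1(0)\ne0$; by uniqueness all edges coincide, $\psi_j\equiv\psi$, the node condition gives $\psi''(0)=0$, and the observation yields $\psi'(0)=\psi''(0)=\psi(L)=\psi'(L)=0$ in the all-Neumann case (call it (III)) or $\psi'(0)=\psi''(0)=\psi(L)=\psi''(L)=0$ in the all-Dirichlet case (call it (IV)).

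Each of (I)--(IV) is a homogeneous $4\times3$ system in $(c_1,c_2,c_3)$ whose solvability is governed by a determinant that, although assembled from the algebraic functions $r_i(\lambda)$, is symmetric in $r_1,r_2,r_3$ and therefore an entire function of $\lambda$ --- this is the entire-function analysis underlying the whole work. Setting $a=r_1L$ and $b=r_2L$ (so that $r_3L=-(a+b)$ and $L^2=-(a^2+ab+b^2)$), I would show that the zero sets of these entire functions, intersected with the spectral constraints, translate into: (I) $\Leftrightarrow L\in\mathcal{N}$ in \eqref{critical}; (II) and (III) $\Leftrightarrow$ $r_ie^{r_iL}$ equal for $i=1,2,3$, i.e. $L\in\mathcal{N}^*$ in \eqref{critical-2}; and (IV) $\Leftrightarrow$ $r_i^2e^{r_iL}$ equal, i.e. $L\in\mathcal{N}^\dagger$ in \eqref{critical-3}. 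The crucial structural remark is that a single eigenfunction can never meet both a Neumann and a Dirichlet overdetermination at $x=L$: imposing $\psi(L)=\psi'(L)=\psi''(L)=0$ on top of the two conditions at $x=0$ gives five constraints on a three-dimensional space and forces $\psi\equiv0$ for every $L$. This kills every symmetric mode in the mixed configurations and every antisymmetric mode that would have to span edges of both types. Assembling the counts then gives each item: with one Neumann and one Dirichlet edge (item 1) no surviving mode exists for any $L$, so controllability holds for all $L>0$; one Neumann against $N-1\ge2$ Dirichlet edges (item 2) leaves only (II), i.e. $\mathcal{N}^*$; $N-1\ge2$ Neumann against one Dirichlet edge (item 3) leaves only (I), i.e. $\mathcal{N}$; $m\ge2$ Neumann and $N-m\ge2$ Dirichlet edges (item 4) leave (I) and (II), i.e. $\mathcal{N}\cup\mathcal{N}^*$; the full-Neumann case (item 5) adds the symmetric mode (III) to (I), giving $\mathcal{N}\cup\mathcal{N}^*$; and the full-Dirichlet case (item 6) adds (IV) to (II), giving $\mathcal{N}^*\cup\mathcal{N}^\dagger$. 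For the necessity (``only if'') directions I would conversely take $L$ in the indicated set, select a compatible root configuration, build the corresponding $\phi_\lambda$ (or $\psi$), extend it by zero to the remaining edges, and verify that $\varphi_j(t,x)=e^{\lambda(T-t)}\psi_j(x)$ solves \eqref{Adj} with vanishing observed traces, so that observability, hence controllability, fails.

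The main obstacle is the entire-function step: proving that the four determinants are genuinely entire (single-valued across the branch points of the $r_i(\lambda)$) and that their zero sets coincide exactly with --- not merely are contained in --- the explicit and implicit sets $\mathcal{N}$, $\mathcal{N}^*$, $\mathcal{N}^\dagger$, including the degenerate spectral values where two roots collide or where $\phi_\lambda''(0)=0$, which must be treated separately so that no spurious critical lengths are created or overlooked. Establishing the global nonexistence behind item 1 (that the five-condition system admits no solution for any $L>0$, not just generically) and confirming that $\mathcal{N}^\dagger$ is the nonempty countable set asserted (deferred to the Appendix) are the remaining delicate points; by contrast, the HUM reduction, the trace estimates, and the compactness--uniqueness passage are standard once the spectral dichotomy above is in place.
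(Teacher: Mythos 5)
Your reduction to the spectral problem, your treatment of the purely ``antisymmetric'' solutions (those with $\varphi_1(0)=0$, where each edge is a multiple of the single normalized solution $\phi_\lambda$ with $\phi_\lambda(0)=\phi_\lambda'(0)=0$), and your identification of the four single-edge problems (I)--(IV) with $\mathcal{N}$, $\mathcal{N}^*$, $\mathcal{N}^*$, $\mathcal{N}^\dagger$ all match the paper. But there is a genuine gap at the step ``the symmetric modes have $\psi_1(0)\neq 0$; by uniqueness all edges coincide.'' This is false: the node conditions force every edge to share only the value $\kappa=\varphi_1(0)$ and the first derivative $\varphi_j'(0)=0$ at the central node, while the second derivatives $\varphi_j''(0)$ are constrained only through their sum $\sum_j\varphi_j''(0)=0$. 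Writing $\varphi_j=w+\varphi_j''(0)\,\phi_\lambda$, where $w$ has Cauchy data $(\kappa,0,0)$, one sees that a solution with $\kappa\neq0$ coincides across edges only if all $\varphi_j''(0)$ vanish --- and nothing forces that. Indeed for $N=2$ the node condition reads $\varphi_1''(0)=-\varphi_2''(0)$, so the edges coincide \emph{only} in the degenerate case $\varphi_1''(0)=0$: the generic candidate solution in item~1 is exactly one your dichotomy never examines. Your ``structural remark'' (three vanishing Cauchy data at $x=L$ force $\psi\equiv0$) applies to a \emph{single} function carrying both overdeterminations; it says nothing about the coupled pair $(\varphi_1,\varphi_2)$ with $\varphi_1\neq\varphi_2$, $\kappa\neq0$, which satisfies only $\varphi_1(L)=\varphi_1'(L)=0$ on one edge and $\varphi_2(L)=\varphi_2''(L)=0$ on the other.

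This missing case is precisely the technical core of the paper: Lemma \ref{lemma 3.6} analyzes the (weighted) two-edge problem \eqref{spectral problem-2} and shows it has no nontrivial solution for \emph{any} $L>0$, via the Fourier transform of the spectral ODE, the identities \eqref{2.2}--\eqref{2.4}, and a sequence of claims (treating $\lambda=0$, multiple roots of $P_\lambda$, and --- the hard part --- simple roots, where a cubic polynomial $\tilde P$ built from the boundary data is compared coefficient-by-coefficient with $\xi^3-\xi-p$). All of the mixed items 1--4 are then reduced to this lemma: differences of same-type edges solve the Rosier/Glass--Guerrero problems, and once those vanish (for $L$ outside $\mathcal{N}$ resp.\ $\mathcal{N}^*$) the system collapses to \eqref{spectral problem-2} with weights $(\varsigma_{Ne},\varsigma_{Di})$ equal to the numbers of edges of each type. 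Note that in the full Neumann and full Dirichlet cases (items 5--6) your argument can be repaired cheaply --- there the differences $\varphi_j-\varphi_k$ of \emph{any} two edges solve the same overdetermined single-edge problem, so away from $\mathcal{N}$ (resp.\ $\mathcal{N}^*$) the edges genuinely do coincide, which is how the paper proceeds --- but in the mixed configurations no such repair is available, and an argument of the strength of Lemma \ref{lemma 3.6} must be supplied before items 1--4 can be claimed.
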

For a better description, we can summarize our results in the following table.

\begin{table}[ht]
\begin{tabular}{|l|l|l|l|}
\hline
          & Neumann & Dirichlet & Critical lengths  \\ \hline
$N=2$     & $1$     & $1$       & $\emptyset$       \\ \hline
$N\geq 3$ & $1$     & $N-1$     & $\mathcal{N}^{*}$ \\ \hline
$N\geq 3$ & $N-1$   & $1$       & $\mathcal{N}$     \\ \hline
$N>3$     & $m$     & $N-m$     & $\mathcal{N}^{*}\cup \mathcal{N}$ \\ \hline
$N\geq2$     & $N$     & $0$       & $\mathcal{N}^{*}\cup \mathcal{N}$ \\ \hline
$N\geq2$     & $0$     & $N$       & $\mathcal{N}^{*}\cup \mathcal{N}^{\dagger}$ \\ \hline
\end{tabular}
\caption{Relation between the placement of the controls and the critical lengths.}
\end{table}
We will provide a brief overview of our approach. To establish well-posedness, the classical semigroup theory typically used for the KdV equation in networks, as seen in various studies \cite{Ammari and Crepeau 2018, Cerpa, Parada2022a, Parada2022b}, proves to be less effective for handling Dirichlet controls. Instead, studies like \cite{CaGaPa, Carreno, GG1, Guilleron} mainly rely on ``solutions by transposition" combined with interpolation methods. However, these techniques are still underdeveloped for the KdV equation on networks, where the network structure introduces challenging trace terms in the computations. These terms are addressed in a recent work by the second author \cite{Parada 2023} and in this article.

For the control problem, it is well known in the literature \cite{Lions} that proving the exact controllability of the system \eqref{2} is equivalent to establishing an inequality of \textit{ observability} for the linearized system associated with \eqref{2}. To achieve this, we generally employ the multiplier method and compactness arguments, which reduce the task to demonstrating a unique continuation property for the state operator.

In our context, proving this unique continuation property involves analyzing the spectral problem associated with the relevant linear operator. Specifically, after applying the Fourier transform, the question becomes one of determining when a certain quotient of entire functions remains entire. We introduce a polynomial function \( q: \mathbb{C} \to \mathbb{C} \) and a family of functions
\[
N_{\alpha}: \mathbb{C} \times (0, \infty) \to \mathbb{C},
\]
where \( \alpha \in \mathbb{C} \setminus \{0\} \) and each restriction \( N_{\alpha}(\cdot, L) \) is entire for any \( L > 0 \). We then define a family of functions \( f_{\alpha}(\cdot, L) \) by
\[
f_{\alpha}(\mu, L) = \frac{N_{\alpha}(\mu, L)}{q(\mu)},
\]
within its maximal domain. The problem then reduces to finding \( L > 0 \) such that there exists \( \alpha \in \mathbb{C} \setminus \{0\} \) for which \( f_{\alpha}(\cdot, L) \) is entire. In some works, such as \cite{Capistrano 2017, CaPaRo, Rosier} and references therein, this approach provides an explicit characterization of the set of critical lengths when it exists. However, in many cases, this set cannot be explicitly determined \cite{ArCaDo, CaChSoGo, CaGaPa, CeRiZh}. 

In this context, this work makes two contributions. First, we examine the well-posedness of system \eqref{LkdV-2} under mixed boundary conditions, specifically focusing on Dirichlet and Neumann conditions. Second, we provide, for the first time, a detailed analysis of a star graph structure where the function \( f_{\alpha} \), relevant to our scenarios, is entire. This analysis leads to the characterization of critical sets \(\mathcal{N}\) defined by \eqref{critical}, \(\mathcal{N}^*\) defined by \eqref{critical-2}, and their union \(\mathcal{N} \cup \mathcal{N}^*\), akin to those introduced by Rosier \cite{Rosier} and Glass and Guerrero \cite{GG1}. Additionally, we identify a new critical set, denoted as \(\mathcal{N}^{\dagger}\). Due to the increased complexity of the functions \( N_{\alpha} \) in our setting compared to these earlier works, our approach involves a refined and meticulous adaptation of the analysis of these functions.

\subsection{Outline}Our work is organized as follows: In Section \ref{sec2}, we conduct a detailed analysis of the well-posedness of problem \eqref{LkdV}, incorporating relevant results from the single KdV equation. Building on this, the objective of Section \ref{sec3} is to establish the observability inequality for system \eqref{LkdV-2} with boundary controls acting under mixed boundary conditions, specifically Dirichlet and Neumann conditions, as well as under full  Neumann boundary conditions and full Dirichlet boundary conditions. Here, we carefully examine the function $f_{\alpha}$ associated with our setting, leading to Theorem \ref{main-a}. Finally,  Appendix \ref{trace-review} provides a review of trace estimates for the single KdV equation, while Appendix \ref{control-ND} demonstrates how the observability inequality ensures controllability, and Appendix \ref{app: countable ciritcal} explores various properties of the set $\mathcal{N}^{\dagger}$.

\section{Well-posedness analysis}\label{sec2}

In this section, we examine the well-posedness of problem \eqref{LkdV}, as established in \cite{Parada 2023}, along with its corresponding adjoint problem. To achieve this, we verify the Kato smoothing property for the solution traces, which is essential for deriving an observability inequality that characterizes critical lengths. Additionally, we investigate the regularity of solutions to the adjoint system when the data belong to regular spaces. Note that the positive constants \( C \) introduced here may vary from line to line.

\subsection{Weak solutions} The next proposition was proved in \cite{Parada 2023}, and it gives the well-posedness to the problem
\begin{align}\label{SLAdj}
\begin{cases}
-\partial_{t}v_{j}-\partial_{x}v_{j}-\partial_{x}^{3}v_{j}=f_j,&t \in (0,T),\ x \in (0,l_j),\ j=1,\dots,N,\\
v_j(t,0)=v_1(t,0),&t\in (0,T),\  j=2,\dots,N,\\
\displaystyle\sum_{j=1}^N\partial_{x}^{2}v_{j}(t,0)=(\alpha-N)v_1(t,0),&t \in (0,T),\\	
v_j(t,l_j)=\partial_{x}v_{j}(t,0)=0,&t\in (0,T),\  j=1,\dots,N,\\
v_j(T,x)=0,&x \in (0,l_j),\  j=1,\dots,N,
\end{cases}
\end{align}
for $f=(f_1,...,f_N) \in L^2(0,T,\mathbb{L}^2(\mathcal{T}))$. This result is useful to establish a notion of solution for \eqref{LkdV}.
\begin{proposition} 	If $f \in L^2(0,T,\mathbb{L}^2(\mathcal{T}))$  there exists a unique solution $$v \in \mathbb{B}_1:=C([0,T],\mathbb{H}^1_r(\mathcal{T}))\cap L^2(0,T,\mathbb{H}^2(\mathcal{T}))$$ of \eqref{SLAdj}. Moreover, there exists $C_1>0$ such that
	\begin{align*}
		\|v\|_{\mathbb{B}_1}+\sum_{j=1}^{N}\|\partial_{x}^{2}v_{j}(\cdot,l_j)\|_{L^2(0,T)}+\sum_{j=1}^{N}\|\partial_{x}v_{j}(\cdot,l_j)\|_{H^\frac{1}{3}(0,T)}\leq C_1\|f\|_{L^2(0,T,\mathbb{L}^2(\mathcal{T})}.
	\end{align*}
\end{proposition}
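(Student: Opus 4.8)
The plan is to treat \eqref{SLAdj} by semigroup theory after a time reversal, and then to upgrade the regularity and extract the boundary traces through the multiplier method together with the sharp smoothing estimates for the single KdV equation recalled in Appendix~\ref{trace-review}; this is essentially the argument of \cite{Parada 2023}. Setting $w_j(t,x)=v_j(T-t,x)$ transforms \eqref{SLAdj} into a forward problem with zero initial datum $w(0)=0$ and source $\tilde f(t,\cdot)=f(T-t,\cdot)\in L^2(0,T,\mathbb{L}^2(\mathcal{T}))$, governed by the operator $A\varphi=-\partial_x\varphi-\partial_x^3\varphi$ (acting componentwise) on
\[
D(A)=\left\{\varphi\in\mathbb{H}^3(\mathcal{T}):\ \varphi_j(0)=\varphi_1(0),\ \textstyle\sum_{j=1}^N\partial_x^2\varphi_j(0)=(\alpha-N)\varphi_1(0),\ \varphi_j(l_j)=\partial_x\varphi_j(0)=0\right\}.
\]

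First I would show that $-A$ generates a $C_0$-semigroup of contractions on $\mathbb{L}^2(\mathcal{T})$ via the Lumer--Phillips theorem. Integrating by parts on each edge and using the endpoint conditions $\varphi_j(l_j)=0$ and $\partial_x\varphi_j(0)=0$, together with the continuity $\varphi_j(0)=\varphi_1(0)=:c$ and the flux condition, yields the dissipation identity
\[
(A\varphi,\varphi)_{\mathbb{L}^2(\mathcal{T})}=\left(\alpha-\tfrac{N}{2}\right)c^2+\frac12\sum_{j=1}^N\big(\partial_x\varphi_j(l_j)\big)^2\ \geq\ 0,
\]
so that $-A$ is dissipative precisely because of the standing assumption $\alpha>\frac{N}{2}$. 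The maximality (range) condition amounts to solving the resolvent problem $(\lambda I+A)\varphi=g$ for some $\lambda>0$, which is a decoupled family of third-order ODEs on the edges subject to the junction and endpoint conditions; I would obtain its unique solvability either by a Lax--Milgram argument on a suitable variational formulation in $\mathbb{H}^1_e(\mathcal{T})$ or by an explicit fundamental-solution computation. Duhamel's formula then gives a unique mild solution $w\in C([0,T],\mathbb{L}^2(\mathcal{T}))$, hence $v\in C([0,T],\mathbb{L}^2(\mathcal{T}))$.

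The regularity and trace bounds are obtained by the multiplier method combined with the single-edge estimates. Multiplying the equation on the $j$-th edge by $x\,v_j$ (and by further multipliers such as $(l_j-x)v_j$) and summing over $j$ produces, after integration by parts, the Kato gain $v\in L^2(0,T,\mathbb{H}^1(\mathcal{T}))$ and the boundary trace $\partial_x^2 v_j(\cdot,l_j)\in L^2(0,T)$, with constants controlled by $\|f\|_{L^2(0,T,\mathbb{L}^2(\mathcal{T}))}$. To reach the full space $\mathbb{B}_1=C([0,T],\mathbb{H}^1_r(\mathcal{T}))\cap L^2(0,T,\mathbb{H}^2(\mathcal{T}))$ and the sharp trace $\partial_x v_j(\cdot,l_j)\in H^{1/3}(0,T)$, I would invoke the sharp local smoothing for the single KdV equation from Appendix~\ref{trace-review}: for the inhomogeneous problem the smoothing gains two spatial derivatives in the relevant space--time norm, which on the bounded edge $(0,l_j)$ upgrades $f_j\in L^2(0,T,L^2(0,l_j))$ to $v_j\in L^2(0,T,H^2(0,l_j))$, while the first-order trace inherits the characteristic $H^{1/3}$ time regularity of KdV. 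Running these edgewise estimates and passing to the limit from smooth data (using the a priori bounds) yields existence in $\mathbb{B}_1$, and uniqueness follows from the contraction semigroup together with the dissipation identity.

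The main obstacle is the central vertex. Every integration by parts and every multiplier identity generates trace terms at $x=0$ that couple all $N$ edges, and the single-edge smoothing estimates must be assembled so that these junction contributions are either of favorable sign or absorbed by the dissipated quantities $(\alpha-\frac{N}{2})c^2$ and $\sum_j(\partial_x\varphi_j(l_j))^2$. Verifying that the continuity, the Neumann-type condition $\partial_x v_j(0)=0$, and the flux condition make the node terms controllable---so that the edgewise regularity transfers to the network without loss---is the delicate point; this is exactly the \emph{challenging trace terms} introduced by the graph structure, and it is handled following \cite{Parada 2023}.
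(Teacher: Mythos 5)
The paper does not actually prove this proposition: it is quoted verbatim from \cite{Parada 2023}, so there is no in-paper argument to compare against. Your sketch follows the route that reference (and the paper's own Proposition \ref{reg. Adj.}) takes — time reversal, Lumer--Phillips for $\partial_x+\partial_x^3$ with the vertex conditions, then multipliers plus the single-edge Kato smoothing of Appendix \ref{trace-review} — and the one computation you carry out, the dissipation identity $(A\varphi,\varphi)_{\mathbb{L}^2(\mathcal{T})}=(\alpha-\tfrac{N}{2})\varphi_1(0)^2+\tfrac12\sum_j(\partial_x\varphi_j(l_j))^2$, is correct. Be aware, though, that the genuinely hard content of the statement — the upgrade to $C([0,T],\mathbb{H}^1_r)\cap L^2(0,T,\mathbb{H}^2)$ and the $H^{1/3}$ trace, with the vertex terms absorbed — is exactly the part you leave as ``handled following \cite{Parada 2023}''; also, a Lax--Milgram argument in $\mathbb{H}^1_e(\mathcal{T})$ for the third-order resolvent problem is not coercive in any obvious sense, and the cleaner route (used in the paper) is to note that $A$ is closed with $A$ and $A^*$ both dissipative.
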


Assume $u_j,p_j,g_j,v_j,f_j\in C^\infty([0,T]\times [0,l_j])$ and $u_j^0\in C^\infty[0,l_j]$. Suppose that $u=(u_1,...,u_N)$ is a solution of \eqref{LkdV} subject to the data $p_j,g_j,u_0^j$ and $v=(v_1,...,v_N)$ is a solution of \eqref{SLAdj} subject to $f=(f_1,...,f_N)$. Multiplying \eqref{SLAdj} by $u_j$, integrating by parts and using the boundary conditions in \eqref{LkdV} and \eqref{SLAdj} we obtain
\begin{align*}
	\sum_{j=1}^N\int_0^{l_j}u_j^0(x)v_j(0,x)-\sum_{j=1}^{N}\int_0^Tp_j(t)\partial_{x}^{2}v_{j}(t,l_j)+\sum_{j=1}^{N}\int_0^Tg_j(t)\partial_{x}v_{j}(t,l_j)=\sum_{j=1}^{N}\int_0^T\int_0^{l_j}f_ju_j.
\end{align*}
Identifying $u_j^0 \in C^\infty([0,l_j])\subset L^2(0,l_j)$ with the functional $(u_j^0)^* \in H_r^{-1}(0,l_j)$ given by
\begin{align*}
	(u_j^0)^*(w)=\int_0^{l_j}u_j^0w,\ \ \forall w \in H_r^{-1}(0,l_j),
\end{align*}
we can write
\begin{align*}
	\int_0^{l_j}u_j^0(x)v_j(0,x)=\langle u_j^0,v_j(0,\cdot)\rangle_{H_r^{-1}(0,l_j)\times H_r^{1}(0,l_j)}.
\end{align*}
Now identifying $g_j \in C^\infty([0,T])\subset L^2(0,T)$ with the functional $g_j^*\in H^{-\frac{1}{3}}(0,T)$ given by
\begin{align*}
	g_j^*(h)=\int_0^Tg_jh,\ \ \forall h \in H^\frac{1}{3}(0,T),
\end{align*}	
we can write
\begin{align*}
	\int_0^Tg_j(t)\partial_{x}v_{j}(t,l_j)=\langle g_j,\partial_{x}v_{j}(\cdot,l_j)\rangle_{H^{-\frac{1}{3}}(0,T)\times H^\frac{1}{3}(0,T)}.
\end{align*}
Therefore we have
\begin{align*}
	\sum_{j=1}^{N}\int_0^T\int_0^{l_j}f_ju_j=&\sum_{j=1}^N\langle u_j^0,v_j(0,\cdot)\rangle_{H_r^{-1}(0,l_j)\times H_r^{1}(0,l_j)}-\sum_{j=1}^{N}\int_0^Tp_j(t)\partial_{x}^{2}v_{j}(t,l_j)\\
	&+\sum_{j=1}^{N}\langle g_j,\partial_{x}v_{j}(\cdot,l_j)\rangle_{H^{-\frac{1}{3}}(0,T)\times H^\frac{1}{3}(0,T)}.
\end{align*}
This motivates the next definition, giving us a weak solution for the system \eqref{LkdV}.
\begin{definition}
	Given $T>0$, $u_0=(u_1^0,u_2^0,\cdots,u_N^0) \in \mathbb{H}_r^{-1}(\mathcal{T})$, $p\in \left(L^2(0,T)\right)^N$ and $g \in (H^{-\frac{1}{3}}(0,T))^N$ a solution by transposition of \eqref{LkdV} is a function $u \in L^2(0,T, \mathbb{L}^2(\mathcal{T}))$ satisfying
	\begin{align*}
		\sum_{j=1}^{N}\int_0^T\int_0^{l_j}f_ju_j=&\sum_{j=1}^N\langle u_j^0,v_j(0,\cdot)\rangle_{H_r^{-1}(0,l_j)\times H_r^{1}(0,l_j)}-\sum_{j=1}^{N}\int_0^Tp_j(t)\partial_{x}^{2}v_{j}(t,l_j)\\
		&+\sum_{j=1}^{N}\langle g_j,\partial_{x}v_{j}(\cdot,l_j)\rangle_{H^{-\frac{1}{3}}(0,T)\times H^\frac{1}{3}(0,T)},
	\end{align*}
	for all $f \in L^2(0,T, \mathbb{L}^2(\mathcal{T}))$, where $v$ is the solution of \eqref{SLAdj} corresponding to $f$. 
\end{definition}
The next result provides the well-posedness of \eqref{LkdV}, proved in \cite{Parada 2023}.
\begin{theorem}
	Let $T>0$ be given. For all $u^0 \in \mathbb{H}_r^{-1}(\mathcal{T})$, $p\in \left(L^2(0,T)\right)^N$ and $g \in (H^{-\frac{1}{3}}(0,T))^N$, there exists a unique solution by transposition for \eqref{LkdV}.
\end{theorem}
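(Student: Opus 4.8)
The plan is to establish this by the transposition (duality) method, in which the sought solution $u$ is produced as the Riesz representative of a bounded linear functional built out of the adjoint problem \eqref{SLAdj}; all the boundedness needed is supplied by the trace estimate of the preceding Proposition, so the argument is essentially a packaging of that estimate. First I would fix the data $u^0 \in \mathbb{H}_r^{-1}(\mathcal{T})$, $p \in (L^2(0,T))^N$ and $g \in (H^{-\frac{1}{3}}(0,T))^N$, and for each $f \in L^2(0,T,\mathbb{L}^2(\mathcal{T}))$ let $v=v(f)$ be the unique solution of \eqref{SLAdj} furnished by the Proposition. I then define the functional $\Lambda$ on $L^2(0,T,\mathbb{L}^2(\mathcal{T}))$ by
\[
\Lambda(f) = \sum_{j=1}^N \langle u_j^0, v_j(0,\cdot)\rangle_{H_r^{-1}(0,l_j)\times H_r^{1}(0,l_j)} - \sum_{j=1}^N \int_0^T p_j(t)\,\partial_x^2 v_j(t,l_j)\,dt + \sum_{j=1}^N \langle g_j, \partial_x v_j(\cdot,l_j)\rangle_{H^{-\frac{1}{3}}(0,T)\times H^{\frac{1}{3}}(0,T)}.
\]
Linearity of $\Lambda$ follows from the linearity of $f\mapsto v(f)$, which is a consequence of the uniqueness in the Proposition.

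The second step is to bound $\Lambda$ term by term using Cauchy--Schwarz in each duality pairing: the first term by $\|u^0\|_{\mathbb{H}_r^{-1}(\mathcal{T})}\,\|v(0,\cdot)\|_{\mathbb{H}_r^{1}(\mathcal{T})} \le \|u^0\|_{\mathbb{H}_r^{-1}(\mathcal{T})}\,\|v\|_{\mathbb{B}_1}$; the second by $\|p\|_{(L^2(0,T))^N}\sum_{j=1}^N\|\partial_x^2 v_j(\cdot,l_j)\|_{L^2(0,T)}$; and the third by $\|g\|_{(H^{-\frac{1}{3}}(0,T))^N}\sum_{j=1}^N\|\partial_x v_j(\cdot,l_j)\|_{H^{\frac{1}{3}}(0,T)}$. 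Each of the three right-hand factors involving $v$ is exactly one of the quantities controlled on the left of the Proposition's estimate, so summing and invoking that bound by $C_1\|f\|_{L^2(0,T,\mathbb{L}^2(\mathcal{T}))}$ yields
\[
|\Lambda(f)| \le C_1\left(\|u^0\|_{\mathbb{H}_r^{-1}(\mathcal{T})} + \|p\|_{(L^2(0,T))^N} + \|g\|_{(H^{-\frac{1}{3}}(0,T))^N}\right)\|f\|_{L^2(0,T,\mathbb{L}^2(\mathcal{T}))},
\]
so $\Lambda$ is a bounded linear functional.

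Finally, by the Riesz representation theorem in the Hilbert space $L^2(0,T,\mathbb{L}^2(\mathcal{T}))$, there is a unique $u \in L^2(0,T,\mathbb{L}^2(\mathcal{T}))$ with $\Lambda(f) = (f,u)_{L^2(0,T,\mathbb{L}^2(\mathcal{T}))} = \sum_{j=1}^N\int_0^T\int_0^{l_j} f_j u_j\,dx\,dt$ for all admissible $f$; by construction this $u$ satisfies the identity in the Definition, hence is a solution by transposition, and uniqueness is immediate since two such solutions would have an $L^2$-difference orthogonal to every $f$. The only genuine difficulty sits inside the Proposition, not in this duality step: one needs the sharp Kato smoothing regularity $\partial_x v_j(\cdot,l_j)\in H^{\frac{1}{3}}(0,T)$ so that the pairing against $g_j\in H^{-\frac{1}{3}}(0,T)$ is well defined, together with the $L^2$-in-time control of the trace $\partial_x^2 v_j(\cdot,l_j)$ needed to pair with $p_j$. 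Once these boundary trace estimates from \cite{Parada 2023} are in hand, the theorem reduces to the bounded-functional-plus-Riesz argument above.
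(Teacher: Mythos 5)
Your proposal is correct and follows exactly the route the paper intends: the paper derives the duality identity motivating the Definition and then delegates the proof to \cite{Parada 2023}, which is precisely the bounded-functional-plus-Riesz argument you give, with all the analytic content residing in the trace estimate $\|v\|_{\mathbb{B}_1}+\sum_j\|\partial_x^2 v_j(\cdot,l_j)\|_{L^2(0,T)}+\sum_j\|\partial_x v_j(\cdot,l_j)\|_{H^{1/3}(0,T)}\leq C_1\|f\|_{L^2(0,T,\mathbb{L}^2(\mathcal{T}))}$ from the preceding Proposition. Your term-by-term bounds and the uniqueness argument via orthogonality are both sound.
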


\subsection{The adjoint system}  The adjoint system associated with \eqref{LkdV} is given by \eqref{Adj}. The next result ensures that the system \eqref{Adj} admits a unique solution.
\begin{proposition}\label{reg. Adj.}
	For any $\varphi^T \in \mathbb{L}^2(\mathcal{T})$ the system \eqref{Adj} admits a unique solution $\varphi \in \mathbb{B}$ which satisfies
	\begin{equation}
	\|\varphi\|_{\mathbb{B}}\leq C\|\varphi^T\|_{\mathbb{L}^2(\mathcal{T})},\label{est. Adj.-1}
	\end{equation}
	for $C>0$,  and
	\begin{equation}
		\sum_{j=1}^N\|\partial_{x}\varphi_{j}(\cdot,l_j)\|_{L^2(0,T)}^2\leq\|\varphi^T\|_{\mathbb{L}^2(\mathcal{T})}^2\label{est. Adj.-2},
	\end{equation}
for all $\varphi^T \in \mathbb{L}^2(\mathcal{T})$.
\end{proposition}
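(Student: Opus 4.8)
The plan is to reduce \eqref{Adj} to a forward Cauchy problem via the time reversal $\psi_j(t,x):=\varphi_j(T-t,x)$, which satisfies $\partial_t\psi_j=\partial_x\psi_j+\partial_x^3\psi_j$ with initial datum $\psi_j(0,\cdot)=\varphi_j^T$ and the same vertex and endpoint conditions. I would then introduce the unbounded operator $A\phi=(\partial_x\phi_j+\partial_x^3\phi_j)_{j=1}^N$ on the domain
\[
D(A)=\Big\{\phi\in\mathbb{H}^3(\mathcal{T}):\ \phi_j(0)=\phi_1(0),\ \textstyle\sum_{j=1}^N\partial_x^2\phi_j(0)=(\alpha-N)\phi_1(0),\ \phi_j(l_j)=\partial_x\phi_j(0)=0\Big\},
\]
and show, via the Lumer--Phillips theorem, that $A$ generates a $C_0$-semigroup of contractions on $\mathbb{L}^2(\mathcal{T})$. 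Existence, uniqueness, and the contraction estimate $\|\psi(t)\|_{\mathbb{L}^2(\mathcal{T})}\le\|\varphi^T\|_{\mathbb{L}^2(\mathcal{T})}$ would immediately deliver the $C([0,T],\mathbb{L}^2(\mathcal{T}))$ part of \eqref{est. Adj.-1}; note that the vertex continuity and the conditions $\phi_j(l_j)=0$ force $\psi(t)\in\mathbb{H}^1_e(\mathcal{T})$ for a.e.\ $t$, consistent with the target space $\mathbb{B}$.

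The heart of the argument is the energy identity. For $\phi\in D(A)$, integrating by parts and inserting the boundary data I expect
\[
(A\phi,\phi)_{\mathbb{L}^2(\mathcal{T})}=-\Big(\alpha-\tfrac{N}{2}\Big)\phi_1(0)^2-\frac12\sum_{j=1}^N\big(\partial_x\phi_j(l_j)\big)^2\le 0,
\]
where dissipativity rests precisely on the standing hypothesis $\alpha>N/2$. Applying the same computation to $\psi$ gives $\frac{d}{dt}\tfrac12\|\psi(t)\|_{\mathbb{L}^2(\mathcal{T})}^2=-(\alpha-\tfrac N2)\psi_1(t,0)^2-\tfrac12\sum_j(\partial_x\psi_j(t,l_j))^2$; integrating in $t$ and discarding the nonnegative vertex term yields $\int_0^T\sum_j(\partial_x\psi_j(t,l_j))^2\,dt\le\|\varphi^T\|_{\mathbb{L}^2(\mathcal{T})}^2$, which is exactly \eqref{est. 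Adj.-2} after undoing the time reversal (the absence of an extra constant matches the factor $\tfrac12$ cancelling).

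For the remaining $L^2(0,T,\mathbb{H}^1_e(\mathcal{T}))$ bound I would employ the Kato smoothing multiplier $x\psi_j$. Testing the equation against $x\psi_j$ and integrating by parts, the third-order term contributes $\tfrac32\int_0^{l_j}(\partial_x\psi_j)^2\,dx$, while the remaining contributions reduce to the total time derivative of $\int_0^{l_j}x\psi_j^2\,dx$, a zeroth-order term $\int_0^{l_j}\psi_j^2\,dx$, and a boundary term proportional to $l_j(\partial_x\psi_j(l_j))^2$. Integrating over $(0,T)$ and summing in $j$, every right-hand term is controlled by $\|\varphi^T\|_{\mathbb{L}^2(\mathcal{T})}^2$ using the contraction and the trace bound \eqref{est. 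Adj.-2}; this bounds $\sum_j\|\partial_x\psi_j\|_{L^2(0,T;L^2(0,l_j))}$, and combined with the uniform $\mathbb{L}^2$ estimate it completes \eqref{est. Adj.-1}. A density argument then extends every identity from smooth data in $D(A)$ to arbitrary $\varphi^T\in\mathbb{L}^2(\mathcal{T})$.

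The step I expect to be the main obstacle is verifying the range (m-dissipativity) condition for Lumer--Phillips, namely solving $(\lambda I-A)\phi=f$ for some $\lambda>0$ and all $f\in\mathbb{L}^2(\mathcal{T})$, since the coupling at the vertex --- continuity $\phi_j(0)=\phi_1(0)$ together with the flux condition $\sum_j\partial_x^2\phi_j(0)=(\alpha-N)\phi_1(0)$ and $\partial_x\phi_j(0)=0$ --- makes the identification of $D(A^*)$ and the bookkeeping of the junction boundary terms delicate. I would resolve this either by showing that the formal adjoint $A^*=-\partial_x-\partial_x^3$ is dissipative on its natural domain, so that both $A$ and $A^*$ are dissipative and $A$ is closed, or by solving the resolvent equation through a variational \emph{Lax--Milgram} formulation on $\mathbb{H}^1_e(\mathcal{T})$, in which the vertex contributions assemble into a bilinear form that is coercive exactly under $\alpha>N/2$.
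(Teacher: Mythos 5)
Your proposal is correct and follows essentially the same route as the paper: generation of a contraction semigroup for the (time-reversed) adjoint operator, the energy identity with multiplier $1$ giving the trace bound \eqref{est. Adj.-2} via $\alpha>\frac{N}{2}$, the multiplier $x$ for the $L^2(0,T;\mathbb{H}^1_e(\mathcal{T}))$ Kato-smoothing part of \eqref{est. Adj.-1}, and a final density argument. The only difference is that where you propose to verify m-dissipativity yourself (via Lumer--Phillips or a Lax--Milgram resolvent argument), the paper simply cites the generation result from the reference of Ammari and Cr\'epeau.
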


\begin{proof}
	Define the operator $Au=-\partial_{x}u-\partial_{x}^{3}u$ with domain
	\begin{align*}
		\mathcal{D}(A)=\left\{u \in \mathbb{H}^3(\mathcal{T})\cap \mathbb{H}^1_e(\mathcal{T}),\ \sum_{j=1}^N\partial_{x}^{2}u_{j}(0)=-\alpha u_1(0)\right\},
	\end{align*}
 where
	\begin{align*}
		\partial_x^ku=\left(\partial_x^ku_1,...,\partial_x^ku_N\right).
	\end{align*}
	Observe that $A$ is the operator associated with \eqref{LkdV} when $p_j=g_j=0$. The adjoint operator of $A$ is given by $A^*v=\partial_{x}v+\partial_{x}^{3}v$ with domain
	\begin{align*}
		\mathcal{D}(A^*)=\left\{v \in \mathbb{H}^3(\mathcal{T})\cap \mathbb{H}^1_e(\mathcal{T});\ \partial_{x}v_{j}(0)=0,\  j=1,\dots,N,\ \ \sum_{j=1}^N\partial_{x}^{2}v_{j}(0)=(\alpha-N)v_1(0)\right\}.
	\end{align*}
	$A^*$ is the operator associated with \eqref{Adj}. According to \cite{Ammari and Crepeau 2018}, $A$ is closed, $A$ and $A^*$ are dissipative, so $A^*$ generates a strongly semigroup of contractions on $\mathbb{L}^2(\mathcal{T})$ which will be denoted by $\{S(t)\}_{t\geq 0}$. From the semigroup theory it follows that, for any $\varphi^T \in \mathbb{L}^2(\mathcal{T})$ the problem \eqref{Adj} has a unique (mild) solution $\varphi=S(T-\cdot)\varphi^T \in C([0,T],\mathbb{L}^2(\mathcal{T}))$. Note that
	\begin{align*}
		\|\varphi\|_{ C([0,T],\mathbb{L}^2(\mathcal{T}))}=\|S(T-\cdot)\varphi^T\|_{ C([0,T],\mathbb{L}^2(\mathcal{T}))}\leq \|\varphi^T\|_{\mathbb{L}^2(\mathcal{T})},\ \forall \varphi^T \in \mathbb{L}^2(\mathcal{T}).
	\end{align*}
	
	To see that $\varphi \in L^2(0,T,\mathbb{H}^1_e(\mathcal{T}))$ and to obtain the estimates \eqref{est. Adj.-1} and \eqref{est. Adj.-2}, first suppose $\varphi^T \in \mathcal{D}(A^*)$. In this case, from semigroup theory, we have $$\varphi \in C^1([0,T],\mathbb{L}^2(\mathcal{T}))\cap C([0,T],\mathcal{D}(A^*)).$$ Assume $q_j \in C^\infty([0,T]\times [0,l_j])$ with $q_j(t,0)=q_1(t,0)$. Multiplying the first equation of \eqref{Adj} by $\varphi_j$, integrating by parts, and using the boundary conditions, we obtain
%	\begin{align*}
%		\frac{1}{2}\sum_{j=1}^N&\int_0^{l_j}(q_j\varphi_j^2)(T,x)+\frac{3}{2}\sum_{j=1}^N\int_0^T\int_0^{l_j}\partial_{x}q_{j}\partial_{x}\varphi_{j}^2=\frac{1}{2}\sum_{j=1}^N\int_0^T\int_0^{l_j}\varphi_j^2(\partial_{t}q_{j}+\partial_{x}q_{j}+\partial_{x}^{3}q_{j})\\
%		&+\left(\alpha-\frac{N}{2}\right)\int_0^Tq_1(t,0)\varphi_1^2(t,0)+\frac{1}{2}\sum_{j=1}^N\int_0^Tq_j(t,l_j)\partial_{x}\varphi_{j}^2(t,l_j)\\
%		&+\frac{1}{2}\sum_{j=1}^N\int_0^T\partial_{x}^{2}q_{j}(t,0)\varphi_{1}^2(t,0)+\frac{1}{2}\sum_{j=1}^N\int_0^{l_j}(q_j\varphi_j^2)(0,x).
%	\end{align*}
%	
%	Choosing $q_j=1$ we get
	\begin{align*}
		\frac{1}{2}\sum_{j=1}^N\int_0^{l_j}\varphi_j^2(T,x)&=\left(\alpha-\frac{N}{2}\right)\int_0^T\varphi_1^2(t,0)+\frac{1}{2}\sum_{j=1}^N\int_0^T\partial_{x}\varphi_{j}^2(t,l_j)+\frac{1}{2}\sum_{j=1}^N\int_0^{l_j}\varphi_j^2(0,x).
	\end{align*}
Thus, 
	\begin{align*}
		\sum_{j=1}^{N}\|\partial_{x}\varphi_{j}(\cdot,l_j)\|_{L^2(0,T)}^2\leq \|\varphi^T\|_{\mathbb{L}^2(\mathcal{T})}^2,
	\end{align*}
	that is, the map
	\begin{align*}
		\begin{array}{rcl}
			\delta:\mathcal{D}(A^*):&\rightarrow &L^2(0,T)\\
			\varphi^T&\mapsto&\partial_{x}\varphi(\cdot,l_j)
		\end{array}
	\end{align*}
	is continuous. Now, choosing $q_j=x$ it follows that
	\begin{align*}
		\int_0^T\|\varphi_x(t,\cdot)\|_{\mathbb{L}^2(\mathcal{T})}^2\leq \left(\frac{T+2M}{3}\right)\|\varphi^T\|_{\mathbb{L}^2(\mathcal{T})}^2,
	\end{align*}
	where $M=\displaystyle\max_{1\leq j\leq N}l_j$. Then, \eqref{est. Adj.-1} holds for every $\varphi^T\in \mathcal{D}(A^*)$ and consequently the map
	\begin{align*}
		\begin{array}{rcl}
			\Gamma^*:(\mathcal{D}(A^*),\|\cdot\|_{\mathbb{L}^2(\mathcal{T})})&\rightarrow&(\mathbb{B},\|\cdot\|_{\mathbb{B}})\\
			\varphi^T&\mapsto&\Gamma^* \varphi^T=\varphi=S(T-\cdot)\varphi^T
		\end{array}
	\end{align*}
	is continuous. By an argument of density we extend the maps $\Gamma^*$ and $\delta$ to $(\mathbb{L}^2(\mathcal{T}),\|\cdot\|_{\mathbb{L}^2(\mathcal{T})})$. For each $\varphi^T \in \mathbb{L}^2(\mathcal{T})$ we refer to $\delta(\varphi^T)$ when write $\partial_{x}\varphi(t,l_j)$ and, in this sense, \eqref{est. Adj.-2} holds for every $\varphi^T \in \mathbb{L}^2(\mathcal{T})$. Moreover, since
	\begin{align*}
		\Gamma\varphi^T=S(T-\cdot)\varphi^T\ \ \forall\varphi^T \in \mathcal{D}(A^*)
	\end{align*}
	by density and continuity, it follows that
	\begin{align*}
		\Gamma\varphi^T=S(T-\cdot)\varphi^T,\ \ \forall\varphi^T \in \mathbb{L}^2(\mathcal{T})
	\end{align*}
	and thus \eqref{est. Adj.-1} was verified.
\end{proof}

\subsection{Trace estimates} Now, we proceed to prove that the solutions of \eqref{Adj} possess the sharp Kato smoothing property, namely
\begin{align*}
	\partial_x^k\varphi_j\in L_x^\infty\left(0,l_j;H^{\frac{1-k}{3}}(0,T)\right),\ k=0,1,2,\ \ j=1,\dots,N.
\end{align*}
%We are now in a position to prove trace estimates to \eqref{Adj}. 
The first result ensures the hidden regularity.
\begin{proposition}\label{trace-Adj.}
	For $T>0$ and every $\varphi^T\in \mathbb{L}^2(\mathcal{T})$, the solution $\varphi$ of \eqref{Adj} posses the hidden regularity
	\begin{align*}
		\partial_x^k\varphi_j\in L_x^\infty\left(0,l_j;H^{\frac{1-k}{3}}(0,T)\right),\ k=0,1,2,\ \ j=1,\dots,N.
	\end{align*}
	Moreover, there exists $C>0$ such that
	\begin{align*}
		\sum_{k=0}^2\sup_{x \in (0,L)}\|\partial_x^k\varphi_j(\cdot,x)\|_{H^\frac{1-k}{3}(0,T)}\leq C\|\varphi^T\|_{\mathbb{L}^2(\mathcal{T})},\ j=1,\dots,N.
	\end{align*}
\end{proposition}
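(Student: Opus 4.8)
The plan is to reduce the coupled problem on the network to $N$ scalar initial–boundary value problems for the single KdV equation, apply the sharp Kato smoothing estimates reviewed in Appendix~\ref{trace-review} on each edge, and control the coupling through the vertex by the a priori bounds of Proposition~\ref{reg. Adj.}. I would first reduce to smooth data: it suffices to prove the stated estimate for $\varphi^T\in\mathcal{D}(A^*)$, for which Proposition~\ref{reg. Adj.} guarantees $\varphi\in C([0,T],\mathcal{D}(A^*))$ so that all the traces below are classical; the general case $\varphi^T\in\mathbb{L}^2(\mathcal{T})$ then follows by density, since the constant $C$ will depend only on $\|\varphi^T\|_{\mathbb{L}^2(\mathcal{T})}$. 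The relevant scaling is that one spatial derivative costs one third of a temporal derivative, so the endpoint case $k=1$ (i.e. $H^0_t=L^2_t$) is essentially the content of \eqref{est. Adj.-2}, and the remaining work is to upgrade this to the full fractional scale $k=0,2$ uniformly in $x$.

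For each fixed $j$, I would regard $\varphi_j$ as a solution of the scalar backward equation $-\partial_t\varphi_j-\partial_x\varphi_j-\partial_x^3\varphi_j=0$ on $(0,l_j)\times(0,T)$, subject to the homogeneous conditions $\varphi_j(t,l_j)=0$ and $\partial_x\varphi_j(t,0)=0$ from \eqref{Adj}, the final datum $\varphi_j^T\in L^2(0,l_j)$, and the single inhomogeneous vertex trace $\psi(t):=\varphi_1(t,0)=\varphi_j(t,0)$. By linearity I would split $\varphi_j=\varphi_j^{(1)}+\varphi_j^{(2)}$, where $\varphi_j^{(1)}$ carries the final datum with homogeneous boundary data and $\varphi_j^{(2)}$ carries zero final datum with the boundary trace $\psi$ at $x=0$. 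Applying the single-edge Kato smoothing of Appendix~\ref{trace-review} to each piece yields
\[
\sum_{k=0}^{2}\sup_{x\in(0,l_j)}\|\partial_x^k\varphi_j(\cdot,x)\|_{H^{\frac{1-k}{3}}(0,T)}\leq C\left(\|\varphi_j^T\|_{L^2(0,l_j)}+\|\psi\|_{H^{\frac{1}{3}}(0,T)}\right),
\]
since the Dirichlet trace of $\varphi$ sits at the $H^{1/3}_t$ level in this scale. Summing over $j$ reduces the proposition to the single quantitative claim that the vertex trace satisfies $\psi=\varphi_1(\cdot,0)\in H^{1/3}(0,T)$ with $\|\psi\|_{H^{1/3}(0,T)}\leq C\|\varphi^T\|_{\mathbb{L}^2(\mathcal{T})}$.

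The main obstacle is precisely this last step: the vertex data are not prescribed per edge but are determined self-consistently through the continuity condition and the flux condition $\sum_{j}\partial_x^2\varphi_j(t,0)=(\alpha-N)\varphi_1(t,0)$, while the energy identity of Proposition~\ref{reg. Adj.} only delivers $\psi\in L^2(0,T)$. To gain the extra $1/3$ derivative I would pass to the Fourier transform in time, after extending $\varphi$ in time by the evolution so that the transform is well defined; on each edge the equation becomes the third-order ODE $i\tau\hat\varphi_j-\partial_x\hat\varphi_j-\partial_x^3\hat\varphi_j=0$ with spectral parameter $\tau$ dual to $t$. Solving this $3N$-dimensional boundary value problem with the two homogeneous endpoint conditions per edge together with the two vertex constraints, I would estimate all the traces $\partial_x^k\hat\varphi_j$ uniformly in $\tau$, tracking the gain $\langle\tau\rangle^{(1-k)/3}$; here the sign condition $\alpha=N>\frac{N}{2}$ makes the boundary quadratic form in the energy identity nonnegative and prevents any loss. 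By Plancherel these uniform-in-$\tau$ bounds are exactly the asserted $L^\infty_x H^{(1-k)/3}_t$ estimates, and in particular they pin down $\|\psi\|_{H^{1/3}(0,T)}\leq C\|\varphi^T\|_{\mathbb{L}^2(\mathcal{T})}$, closing the argument after the density extension. I expect the delicate point to be the uniform invertibility of the transformed vertex system as $|\tau|\to\infty$, where the explicit structure of the Airy characteristic roots and the coupling conditions must be exploited.
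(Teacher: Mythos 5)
Your reduction of the coupled network problem to the single quantitative claim $\|\varphi_1(\cdot,0)\|_{H^{1/3}(0,T)}\leq C\|\varphi^T\|_{\mathbb{L}^2(\mathcal{T})}$ is sound and in fact coincides with the last step of the paper's proof: once the vertex trace is known to lie in $H^{1/3}(0,T)$, each $\varphi_j$ solves a scalar backward KdV problem with Dirichlet datum $\varphi_1(t,0)$ at $x=0$ and homogeneous conditions elsewhere, and Corollary~\ref{W.P.KdV(0,L)-4} delivers the Kato smoothing edge by edge. The gap is that you never actually prove the $H^{1/3}$ bound on the vertex trace. Your proposed route --- Fourier transform in time, followed by uniform-in-$\tau$ estimates for the $3N$-dimensional transformed boundary value problem --- is exactly where all the difficulty of the proposition is concentrated, and you defer it with the remark that the ``uniform invertibility of the transformed vertex system as $|\tau|\to\infty$'' is the delicate point. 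That is not a proof: establishing nondegeneracy and the correct $\langle\tau\rangle^{(1-k)/3}$ gain for the characteristic determinant of the coupled system would amount to redoing the Bona--Sun--Zhang boundary-integral analysis for the whole network, and nothing in your sketch carries this out. (A smaller inaccuracy: \eqref{est. Adj.-2} controls $\partial_x\varphi_j$ only at $x=l_j$, not uniformly in $x$, so the $k=1$ endpoint is not ``essentially'' already known either.)

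The paper sidesteps the frequency-domain analysis entirely with an algebraic decomposition that you are missing. Setting $\xi_j=\varphi_1-\varphi_j$ kills the common vertex value, so each $\xi_j$ solves a single KdV equation with fully homogeneous boundary conditions and inherits Kato smoothing directly from Corollary~\ref{W.P.KdV(0,L)-4}. Setting $\psi=\sum_{j=1}^N\varphi_j$ produces a single KdV problem whose only inhomogeneous datum is $\partial_x^2\psi(t,0)=(\alpha-N)\varphi_1(t,0)$; since Proposition~\ref{reg. Adj.} already gives $\varphi_1(\cdot,0)\in L^2(0,T)\hookrightarrow H^{-1/3}(0,T)$, and $H^{-1/3}$ is precisely the level at which the second-derivative trace enters in Proposition~\ref{W,P.Kdv(0,L)-5}, $\psi$ also gets Kato smoothing. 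The identity $\varphi_1=\frac{1}{N}\bigl(\sum_{j=1}^N\xi_j+\psi\bigr)$ then upgrades the vertex trace from $L^2$ to $H^{1/3}$, which is the bootstrap your argument needs. If you want to keep your structure, you should replace the unproved resolvent claim with this sum-and-difference construction (or supply the full uniform-in-$\tau$ analysis of the $3N\times 3N$ system, which is a substantially harder task).
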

\begin{proof}  Our proof will be split into two cases.
We divide it into two cases.

\vspace{0.2cm}
\noindent\textbf{Case 1:} $N \in \mathbb{N}$ and $l_j=L$, for every $j=1,\dots,N$.
\vspace{0.2cm}

For each $j=1,\dots,N$ define $\xi_j=\varphi_1-\varphi_j$. Note that $$\xi_j \in \mathcal{Z}_T=C([0,T];L^2(0,L))\cap L^2(0,T;H^1(0,L))$$ and its solves
\begin{align*}
	\begin{cases}
		\partial_{t}\xi_{j}+\partial_{x}\xi_{j}+\partial_{x}^{3}\xi_{j}=0,&t\in(0,T),\ x \in (0,L),\\
		\xi_j(t,0)=\xi_j(t,L)=\partial_{x}\xi_{j}(t,0)=0,&t \in(0,T),\\
		\xi_j(T,x)=\varphi_1^T(x)-\varphi_j^T(x),&x \in (0,L).
\end{cases}
\end{align*}
Thanks to Corollary \ref{W.P.KdV(0,L)-4} we get $\partial_x^k\xi_j\in L_x^\infty\left(0,L;H^{\frac{1-k}{3}}(0,T)\right)$, for $k=0,1,2$, and
\begin{align}\label{eq 12-b}
	\|\xi_j\|_{\mathcal{Z}_T}+\sum_{k=0}^2\sup_{x \in (0,L)}\|\partial_x^k\xi_j(\cdot,x)\|_{H^\frac{1-k}{3}(0,T)}\leq C\left(\|\varphi_1^T-\varphi_j^T\|_{L^2(0,L)}\right)\leq C\|\varphi^T\|_{\mathbb{L}^2(\mathcal{T})}
\end{align}
where $C>0$ is a constant.

Now, since $\varphi \in L^2(0,T;\mathbb{H}_e^1(\mathcal{T}))$ and $H^1(0,L)\hookrightarrow C([0,L])$
\begin{align*}
	\int_0^T|\varphi_1(t,0)|^2dt
	%&\leq \int_0^T\|\varphi_1(t,\cdot)\|_{C[0,L]}^2dt\leq c_1^2\int_0^T\|\varphi_1(t,\cdot)\|_{H^1(0,L)}^2dt\\
	&\leq c\|\varphi\|_{L^2(0,T;\mathbb{H}_e^1(\mathcal{T}))}^2
\end{align*}
for some $c>0$. From Proposition \ref{reg. Adj.} It follows that
\begin{align*}
	\int_0^T|\varphi_1(t,0)|^2dt\leq \tilde{C}\|\varphi^T\|_{\mathbb{L}^2(\mathcal{T})}^2,
\end{align*}
for $\tilde{C}>0$. Then $\varphi_1(\cdot,0) \in L^2(0,T)\hookrightarrow H^{-\frac{1}{3}}(0,T)$ (this embedding is an isometry) and by the last inequality
\begin{align}\label{eq 14}
	\|\varphi_1(\cdot,0)\|_{H^{-\frac{1}{3}}(0,T)}=\|\varphi_1(\cdot,0)\|_{L^2(0,T)}\leq \tilde{C}\|\varphi^T\|_{\mathbb{L}^2(\mathcal{T})}.
\end{align}
Defining $\displaystyle\psi=\sum_{j=1}^N\varphi_j$ we conclude that $$\psi \in \mathcal{Z}_T=C([0,T];L^2(0,L))\cap L^2(0,T;H^1(0,L))$$ and it solves the system
\begin{align*}
\begin{cases}
		\partial_{t}\psi+\partial_{x}\psi+\partial_{x}^{3}\psi=0,&t\in(0,T),\ x \in (0,L),\\
		\psi(t,L)=\partial_{x}\psi(t,0)=0,\ \ \partial_{x}^{2}\psi(t,0)=(\alpha-N)\varphi_1(t,0),&t \in(0,T),\\
		\psi(T,x)=\displaystyle\sum_{j=1}^N\varphi_j^T(x),&x \in (0,L).
\end{cases}
\end{align*}
Then, the Proposition \ref{W,P.Kdv(0,L)-5} gives us $\partial_x^k\psi\in L_x^\infty\left(0,L;H^{\frac{1-k}{3}}(0,T)\right)$ for $k=0,1,2$ and
\begin{align*}
	\begin{aligned}
	&\|\psi\|_{\mathcal{Z}_T}+\sum_{k=0}^2\sup_{x \in (0,L)}\|\partial_x^k\psi(\cdot,x)\|_{H^\frac{1-k}{3}(0,T)}\leq C\left(\sum_{j=1}^N\|\varphi_j^T\|_{L^2(0,L)}+|\alpha-N|\|\varphi_1(\cdot,0)\|_{H^{-\frac{1}{3}}(0,T)}\right),
\end{aligned}
\end{align*}
for some positive constant $C$. Using \eqref{eq 14} we get another constant, still denoted by $C>0$ such that
\begin{align}\label{eq 14-b}
	%\begin{aligned}
	\|\psi\|_{\mathcal{Z}_T}+\sum_{k=0}^2\sup_{x \in (0,L)}\|\partial_x^k\psi(\cdot,x)\|_{H^\frac{1-k}{3}(0,T)}%&\leq C\left(C\|\varphi^T\|_{\mathbb{L}^2(\mathcal{T})}+c_1C_2|\alpha-N|\|\varphi^T\|_{\mathbb{L}^2(\mathcal{T})}\right)\\&
	\leq C(|\alpha-N|)\|\varphi^T\|_{\mathbb{L}^2(\mathcal{T})}.
%\end{aligned}
\end{align}

Observe that
\begin{align*}
\sum_{j=1}^N\xi_j+\psi&=\sum_{j=1}^N(\varphi_1-\varphi_j)+\sum_{j=1}^N\varphi_j=N\varphi_1,
\end{align*}
that is,
\begin{align*}
\varphi_1=\frac{1}{N}\left(\sum_{j=1}^N\xi_j+\psi\right).
\end{align*}
But we know that $\psi,\xi_j\in L_x^\infty\left(0,L;H^{\frac{1-k}{3}}(0,T)\right)$ for $j=1,\dots,N$, so we conclude that $$\partial_x^k\varphi_1\in L_x^\infty\left(0,L;H^{\frac{1-k}{3}}(0,T)\right),$$ for $k=0,1,2$. Moreover,
\begin{align*}
	\sum_{k=0}^2\sup_{x \in (0,L)}\|\partial_x^k\varphi_1(\cdot,x)\|_{H^\frac{1-k}{3}(0,T)}\leq& \frac{1}{N}\left(\sum_{j=1}^N\sum_{k=0}^2\sup_{x \in (0,L)}\|\partial_x^k\xi_j(\cdot,x)\|_{H^\frac{1-k}{3}(0,T)}+\sum_{k=0}^2\sup_{x \in (0,L)}\|\partial_x^k\psi(\cdot,x)\|_{H^\frac{1-k}{3}(0,T)}\right).
\end{align*}
Due to the inequalities \eqref{eq 12-b} and \eqref{eq 14-b} it follows that
\begin{align}\label{eq 15-b}
	\sum_{k=0}^2\sup_{x \in (0,L)}\|\partial_x^k\varphi_1(\cdot,x)\|_{H^\frac{1-k}{3}(0,T)}&\leq C\|\varphi^T\|_{\mathbb{L}^2(\mathcal{T})}
\end{align}
where $C=C(\alpha,N)>0$. Now, since $\varphi_j$ is the solution of
\begin{align*}
\begin{cases}
		\partial_{t}\varphi_{j}+\partial_{x}\varphi_{j}+\partial_{x}^{3}\varphi_{j}=0,&t \in (0,T),\ x \in (0,L),\\
		\varphi_j(t,0)=\varphi_1(t,0), \ \varphi_j(t,L)=\partial_{x}\varphi_{j}(t,0)=0,&t\in (0,T),\\
		\varphi_j(T,x)=\varphi_j^T(x),&x \in (0,L),
	\end{cases}
\end{align*}
as $\varphi_1(t,0)\in H^{\frac{1}{3}}(0,T)$, Corollary \ref{W.P.KdV(0,L)-4} ensures that $\partial_x^k\varphi_j\in L_x^\infty\left(0,L;H^{\frac{1-k}{3}}(0,T)\right)$ for $k=0,1,2$ and 
%\begin{align*}
%	\sum_{k=1}^2\sup_{x \in (0,L)}\|\partial_x^k\varphi_j(\cdot,x)\|_{H^\frac{1-k}{3}(0,T)}&\leq C \left(\|\varphi_j^T\|_{L^2(0,L)}+\|\varphi_1(\cdot,0)\|_{H^\frac{1}{3}(0,T)}\right).
%\end{align*}
using \eqref{eq 15-b} we get
\begin{align*}
	\sum_{k=0}^2\sup_{x \in (0,L)}\|\partial_x^k\varphi_j(\cdot,x)\|_{H^\frac{1-k}{3}(0,T)}	
	%&\leq C\left(\|\varphi^T\|_{\mathbb{L}^2(\mathcal{T})}+\tilde{C}_8\|\varphi^T\|_{\mathbb{L}^2(\mathcal{T})}\right)\\
	&\leq C\|\varphi^T\|_{\mathbb{L}^2(\mathcal{T})},
\end{align*}
for $C>0$ showing this case. 
%and the result is proved with $C_8=C_6(1+\tilde{C}_8)$.\\

\vspace{0.2cm}
\noindent\textbf{Case 2:} $N \in \mathbb{N}$ and arbitrary lengths $l_j>0,\ j=1,\dots,N$.
\vspace{0.2cm}

Define $L=\max_{j=1,\dots,N}l_j$ and
\begin{align*}
	\tilde{\varphi}_j^T(x):=
	\begin{cases}
		\varphi_j^T(x),&\ x \in (0,l_j),\\
		0,&x \in (0,L)\backslash(0,l_j).
	\end{cases}
\end{align*}
Denote by $\tilde{\varphi}\in C([0,T],\mathbb{L}^2(\tilde{\mathcal{T}}))\cap L^2(0,T,\mathbb{H}^1_e(\tilde{\mathcal{T}}))$ the solution of \eqref{LkdV-2} associated to data $\tilde{\varphi}^T=(\tilde{\varphi}_1^T,...,\tilde{\varphi}_N^T)\in \mathbb{L}^2(\tilde{\mathcal{T}})=\left(L^2(0,L)\right)^N$. From the previous case, we have
\begin{align*}
	\sum_{k=0}^2\sup_{x \in (0,L)}\|\partial_x^k\tilde{\varphi}_j(\cdot,x)\|_{H^\frac{1-k}{3}(0,T)}	&\leq C\|\tilde{\varphi}^T\|_{\mathbb{L}^2(\tilde{\mathcal{T})}}
\end{align*}
It follows from the definition that $\|\tilde{\varphi}^T\|_{\mathbb{L}^2(\tilde{\mathcal{T})}}=\|\varphi^T\|_{\mathbb{L}^2(\mathcal{T})}$. Also from the definition we have $\tilde{\varphi}_j^T=\varphi_j^T$ in $(0,l_j)$ so $\varphi_j$ and $\tilde{\varphi}_j$ solve the system
\begin{align*}
	\begin{cases}
		\partial_{t}\varphi_{j}+\partial_{x}\varphi_{j}+\partial_{x}^{3}\varphi_{j}=0,&t \in (0,T),\ x \in (0,l_j),\\
		\varphi_j(t,0)=\varphi_1(t,0)=\varphi_j(t,l_j)=\partial_{x}\varphi_{j}(t,0)=0,& t \in (0,T),\\
		\varphi_j(T,x)=\varphi_j^T(x),&x \in (0,l_j).
	\end{cases}
\end{align*}
Then by uniqueness of solution we obtain $\tilde{\varphi}_j=\varphi_j$ in $(0,T)\times (0,l_j)$. Consequently, for $j=1,\dots,N$,
\begin{align*}
\sum_{k=0}^2\sup_{x \in (0,l_j)}\|\partial_x^k\varphi_j(\cdot,x)\|_{H^\frac{1-k}{3}(0,T)}%&=\sum_{k=0}^2\sup_{x \in (0,l_j)}\|\partial_x^k\tilde{\varphi}_j(\cdot,x)\|_{H^\frac{1-k}{3}(0,T)}\\
&\leq \sum_{k=0}^2\sup_{x \in (0,L)}\|\partial_x^k\tilde{\varphi}_j(\cdot,x)\|_{H^\frac{1-k}{3}(0,T)}\leq C\|\tilde{\varphi}^T\|_{\mathbb{L}^2(\tilde{\mathcal{T})}}=C\|\varphi^T\|_{\mathbb{L}^2(\mathcal{T})}.
\end{align*}
which concludes case 2 and, consequently, the proof of Proposition \ref{trace-Adj.}.
\end{proof}

To finish this subsection, we establish that the traces $\partial_{x}^{2}\varphi_{j}(\cdot,0),\partial_{x}^{2}\varphi_{j}(\cdot,l_j)$ belong to $L^2(0,T)$ and depend continuously on the initial data.
\begin{proposition}\label{phi_xx in L^2}
	Given $\varphi^T \in \mathbb{L}^2(\mathcal{T})$,  we have that $\partial_{x}^{2}\varphi_{j}(\cdot,x) \in L^2(0,T)$, for any $x \in [0,l_j]$. Moreover, the following inequality holds
	\begin{align*}
		\int_0^T\left(\partial_{x}^{2}\varphi_{j}(t,x)\right)^2dt\leq C\|\varphi^T\|_{\mathbb{L}^2(\mathcal{T})}^2,\ \forall\ \varphi^T \in \mathbb{L}^2(\mathcal{T}),
	\end{align*}
	for some positive constant $C$.
\end{proposition}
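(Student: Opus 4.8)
The plan is to reuse the reduction to scalar KdV equations already exploited for Proposition \ref{trace-Adj.}, and then to invoke the corresponding second--order trace estimate for the single equation. Concretely, I would again set $\xi_j=\varphi_1-\varphi_j$ and $\psi=\sum_{j=1}^N\varphi_j$. As recorded in the proof of Proposition \ref{trace-Adj.}, each $\xi_j$ solves a single KdV equation on $(0,L)$ with the homogeneous conditions $\xi_j(t,0)=\xi_j(t,L)=\partial_x\xi_j(t,0)=0$ and final datum $\varphi_1^T-\varphi_j^T\in L^2(0,L)$, while $\psi$ solves the single equation with $\psi(t,L)=\partial_x\psi(t,0)=0$, the inhomogeneous condition $\partial_x^2\psi(t,0)=(\alpha-N)\varphi_1(t,0)$, and final datum $\sum_{j=1}^N\varphi_j^T$.

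The central ingredient is the sharp boundary trace estimate for the scalar KdV equation recalled in Appendix \ref{trace-review}, which upgrades the Kato regularity $\partial_x^2 w\in L_x^\infty(0,L;H^{-1/3}(0,T))$ to the statement that the second--order trace $\partial_x^2 w(\cdot,x)$ belongs to $L^2(0,T)$ for every $x\in[0,L]$, with a bound by the $L^2$--norm of the data (and, for $\psi$, of the boundary forcing). Applying this to $\xi_j$ yields $\partial_x^2\xi_j(\cdot,x)\in L^2(0,T)$ together with $\|\partial_x^2\xi_j(\cdot,x)\|_{L^2(0,T)}\le C\|\varphi_1^T-\varphi_j^T\|_{L^2(0,L)}\le C\|\varphi^T\|_{\mathbb{L}^2(\mathcal{T})}$. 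For $\psi$ the same estimate produces, besides the contribution of $\sum_j\varphi_j^T$, a term proportional to $\|\varphi_1(\cdot,0)\|_{L^2(0,T)}$ (equivalently its $H^{-1/3}$ norm), which I would control exactly as in \eqref{eq 14} by $C\|\varphi^T\|_{\mathbb{L}^2(\mathcal{T})}$; hence $\partial_x^2\psi(\cdot,x)\in L^2(0,T)$ with the desired bound.

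It then remains to transfer these bounds back to the original unknowns. Since $\varphi_1=\tfrac1N\big(\sum_{j=1}^N\xi_j+\psi\big)$ and $\varphi_j=\varphi_1-\xi_j$, the trace $\partial_x^2\varphi_j(\cdot,x)$ is a fixed linear combination of $\partial_x^2\xi_i(\cdot,x)$ and $\partial_x^2\psi(\cdot,x)$, so it lies in $L^2(0,T)$ and satisfies $\int_0^T(\partial_x^2\varphi_j(t,x))^2\,dt\le C\|\varphi^T\|_{\mathbb{L}^2(\mathcal{T})}^2$, with a constant $C=C(\alpha,N)$. As in Proposition \ref{reg. Adj.}, I would first carry this out for $\varphi^T\in\mathcal{D}(A^*)$, where all traces are classical, and then pass to arbitrary $\varphi^T\in\mathbb{L}^2(\mathcal{T})$ by the density and continuity argument already used there.

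I expect the main obstacle to be precisely this scalar second--order trace estimate, i.e.\ passing from the Kato bound in $H^{-1/3}(0,T)$ to a genuine $L^2(0,T)$ bound on $\partial_x^2 w(\cdot,x)$. A naive multiplier computation (testing the equation against $q(x)\partial_x^2 w$) does isolate the boundary term $\tfrac12\int_0^T q(L)\,\partial_x^2 w(t,L)^2\,dt$, but for a non-constant weight $q$ it also generates an interior term $\int_0^T\!\!\int_0^L q'\,(\partial_x^2 w)^2$ and, after integrating the time--derivative contribution, the quantities $\int_0^L q\,\partial_x w(T)^2$ and $\int_0^L q\,\partial_x w(0)^2$, none of which is controlled by $\|w^T\|_{L^2}$ alone. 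Overcoming this is exactly where the boundary smoothing at the Dirichlet endpoint $x=l_j$ must be used, together with the structural conditions $w(t,l_j)=0$ and $\partial_x w(t,0)=0$ that annihilate the offending boundary contributions; this is the content I borrow from the scalar theory in Appendix \ref{trace-review}, while the vertex coupling is handled entirely by the linear decomposition above.
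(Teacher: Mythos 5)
Your reduction to the scalar unknowns $\xi_j=\varphi_1-\varphi_j$ and $\psi=\sum_j\varphi_j$ is not needed for this statement: Proposition \ref{trace-Adj.} already provides the network-level bound $\sup_{x}\|\partial_x^2\varphi_j(\cdot,x)\|_{H^{-1/3}(0,T)}\le C\|\varphi^T\|_{\mathbb{L}^2(\mathcal{T})}$, so the decomposition only re-derives what is available. The genuine gap is the one you flag yourself at the end: the passage from an $H^{-1/3}(0,T)$ bound on the second-order trace to an $L^2(0,T)$ bound. The ``sharp boundary trace estimate'' you propose to borrow from Appendix \ref{trace-review} --- the claim that $\partial_x^2 w(\cdot,x)\in L^2(0,T)$ for every $x$ with a bound by the $L^2$ norm of the data --- is simply not in that appendix: Propositions \ref{W.P.KdV(0,L)-1}--\ref{W,P.Kdv(0,L)-5} and Corollary \ref{W.P.KdV(0,L)-4} control $\partial_x^k v(\cdot,x)$ only in $H^{(1-k)/3}(0,T)$, which for $k=2$ is $H^{-1/3}(0,T)$. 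Your own discussion of the multiplier computation correctly explains why a direct proof of the missing scalar estimate is not straightforward, so the proposal as written does not close.

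The paper avoids proving any new trace estimate. It first takes $\varphi^T\in\mathcal{D}(A^*)$, so that $\varphi\in C([0,T],\mathcal{D}(A^*))$; the equivalence of the $H^3(0,l_j)$ norm with the graph norm of $P_j w=\partial_x w+\partial_x^3 w$ (quoted from \cite{KdV by flatness}) then shows that $t\mapsto\partial_x^2\varphi_j(t,x)$ is continuous on $[0,T]$ for each fixed $x$, hence an honest element of $L^2(0,T)$. For such a function the paper uses the asserted isometric embedding $L^2(0,T)\hookrightarrow H^{-1/3}(0,T)$ to identify $\|\partial_x^2\varphi_j(\cdot,x)\|_{L^2(0,T)}$ with $\|\partial_x^2\varphi_j(\cdot,x)\|_{H^{-1/3}(0,T)}$, which is exactly the quantity already bounded by Proposition \ref{trace-Adj.}; a density and continuity argument then extends the estimate to all $\varphi^T\in\mathbb{L}^2(\mathcal{T})$. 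To repair your write-up, replace the appeal to the nonexistent scalar $L^2$ trace estimate by this two-step scheme: qualitative membership of the trace in $L^2(0,T)$ for data in $\mathcal{D}(A^*)$, followed by the quantitative $H^{-1/3}$ bound of Proposition \ref{trace-Adj.} and density.
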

\begin{proof} Consider the operator $$P_j:H^3(0,l_j)\rightarrow L^2(0,l_j)$$ defined by $P_jw=\partial_{x}w+\partial_{x}^{3}w$. The graph norm in $H^3(0,l_j)$ associated to $P_j$ is $$\|w\|_{P_j}=\|w\|_{L^2(0,l_j)}+\|P_jw\|_{L^2(0,l_j)}.$$ From \cite[Lemma A.2]{KdV by flatness} there exists a constant $d_1>0$ such that
	\begin{align}\label{H^3<P_j}
		\|w\|_{H^3(0,l_j)}\leq d_1\|w\|_{P_j},\ \forall w \in H^3(0,l_j),\ j=1,\dots,N.
	\end{align}
	On the other hand, given $v=(v_1,...,v_N) \in \mathcal{D}(A^*)$ we have
	\begin{align*}
		\|v\|_{\mathcal{D}(A^*)}=\|v\|_{\mathbb{L}^2(\mathcal{T})}+\|\partial_{x}v+\partial_{x}^{3}v\|_{\mathbb{L}^2(\mathcal{T})}\geq \|v\|_{\mathbb{L}^2(\mathcal{T})}=\left(\sum_{j=1}^N\|v_j\|_{L^2(0,l_j)}^2\right)^\frac{1}{2}\geq \|v_j\|_{L^2(0,l_j)}
	\end{align*}
	as well as
	\begin{align*}
		\|v\|_{\mathcal{D}(A^*)}\geq \|\partial_{x}v+\partial_{x}^{3}v\|_{\mathbb{L}^2(\mathcal{T})}=\left(\sum_{j=1}^N\|\partial_{x}v_{j}+\partial_{x}^{3}v_{j}\|_{L^2(0,l_j)}^2\right)^\frac{1}{2}\geq \|\partial_{x}v_{j}+\partial_{x}^{3}v_{j}\|_{L^2(0,l_j)}.
	\end{align*}
Therefore, this yields
	\begin{align}\label{P_j<D(A*)}
		\|v_j\|_{P_j}\leq 2\|v\|_{\mathcal{D}(A^*)},\ j=1,\dots,N.
	\end{align}
	
Let us first assume that  $\varphi_j^T \in \mathcal{D}(A^*)$. 

\vspace{0.2cm}
\noindent\textbf{Claim:} We claim that $\partial_{x}^{2}\varphi_{j}\in C([0,T]\times [0,l_j])$. 
\vspace{0.2cm}

Indeed, since $\varphi \in C([0,T],\mathcal{D}(A^*))$ we have $\partial_{x}^{2}\varphi_{j}(t,\cdot)\in H^1(0,l_j)\hookrightarrow C([0,l_j])$. Hence, fixed $x \in [0,l_j]$, for some constant $x>0$ we get
	\begin{align*}
		|\partial_{x}^{2}\varphi_{j}(t,x)-\partial_{x}^{2}\varphi_{j}(t_0,x)|%&\leq \|\partial_{x}^{2}\varphi_{j}(t,\cdot)-\partial_{x}^{2}\varphi_{j}(t_0,\cdot)\|_{C([0,l_j])}\\
        &\leq c\|\partial_{x}^{2}\varphi_{j}(t,\cdot)-\partial_{x}^{2}\varphi_{j}(t_0,\cdot)\|_{H^1(0,l_j)}\leq d_2\|\varphi_j(t,\cdot)-\varphi_j(t_0,\cdot)\|_{H^3(0,l_j)}
	\end{align*}
	for any $t,t_0\in [0,T]$. Using \eqref{H^3<P_j} and \eqref{P_j<D(A*)} we obtain
	\begin{align*}
		|\partial_{x}^{2}\varphi_{j}(t,x)-\partial_{x}^{2}\varphi_{j}(t_0,x)|&\leq C\|\varphi_j(t,\cdot)-\varphi_j(t_0,\cdot)\|_{\mathcal{D}(A^*)},
	\end{align*}
for $C>0$ and from where we conclude that $\partial_{x}^{2}\varphi_{j}(\cdot,x)\in C([0,T])$ and, consequently, $\partial_{x}^{2}\varphi_{j}(\cdot,x)\in L^2(0,T)$, showing the claim.  
	
	\vspace{0.1cm }
	Using Proposition \ref{trace-Adj.} and the fact that $L^2(0,T)\hookrightarrow H^{-\frac{1}{3}}(0,T)$ is an isometry we obtain, for each $x \in [0,l_j]$, that
	\begin{align*}
		\int_0^T\left(\partial_{x}^{2}\varphi_{j}(t,x)\right)^2dt=\|\partial_{x}^{2}\varphi_{j}(\cdot,x)\|_{L^2(0,T)}^2=\|\partial_{x}^{2}\varphi_{j}(\cdot,x)\|_{H^{-\frac{1}{3}}(0,T)}^2\leq C\|\varphi_j^T\|_{\mathbb{L}^2(\mathcal{T})}^2.
	\end{align*}
	Finally, with an argument of density and continuity, we extend the result for every $\varphi_j^T \in \mathbb{L}^2(\mathcal{T})$. The result is thus achieved. 
\end{proof}

\subsection{Regularity}  With the spaces \eqref{xx} and \eqref{yy} in hand, we have the following proposition.
\begin{proposition}\label{W.P.L^2xH_0^1}
	If the final data $\varphi^T \in X_m$, then the corresponding solution $\varphi$ of the system \eqref{Adj} has the additional regularity $\varphi \in L^2(0,T;Y_m)$	with
	\begin{align*}
		\|\varphi\|_{L^2(0,T;Y_m)}\leq C\|\varphi^T\|_{X_m}.
	\end{align*}
	for some positive constant $C$. Furthermore, the estimate
	\begin{align*}
		\|\varphi^T\|_{X_m}^2&\leq \frac{1}{T}\sum_{j=1}^N\int_0^T\int_0^{l_j}\varphi_j^2+\left(\frac{1}{T}+\frac{6}{\sigma}\right)\sum_{j=m+1}^N\int_0^T\int_0^{l_j}\partial_{x}\varphi_{j}^2+2\left(\alpha-\frac{N}{2}\right)\int_0^T\varphi_1^2(t,0)\\
		&+2\sum_{j=m+1}^N\|\varphi_j^T\|_{L^2(0,l_j)}^2+\sum_{j=1}^m\int_0^T\partial_{x}\varphi_{j}^2(t,l_j)+\sum_{j=m+1}^N\int_0^T\partial_{x}^{2}\varphi_{j}^2(t,l_j),
	\end{align*}
is verified for every $\varphi^T\in  X_m$, where $\displaystyle\sigma:=\min_{1\leq j\leq N}l_j$.
\end{proposition}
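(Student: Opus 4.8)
The plan is to reduce everything, by density, to final data $\varphi^T$ in $\mathcal{D}(A^*)$ (or a higher power of its domain), where all the traces below are classical and every integration by parts is legitimate. The continuity of the trace maps established in Propositions \ref{trace-Adj.} and \ref{phi_xx in L^2}, together with the semigroup estimate of Proposition \ref{reg. Adj.}, will then extend the final inequalities to all $\varphi^T\in X_m$. Throughout I will use repeatedly the basic energy identity from the proof of Proposition \ref{reg. Adj.}, whose $t$-local form reads $\tfrac12\tfrac{d}{dt}\sum_{j}\int_0^{l_j}\varphi_j^2=\tfrac12\sum_{j}(\partial_x\varphi_j(t,l_j))^2+(\alpha-\tfrac N2)\varphi_1^2(t,0)$, obtained after summing over $j$ and invoking $\varphi_j(t,0)=\varphi_1(t,0)$ and $\sum_j\partial_x^2\varphi_j(t,0)=(\alpha-N)\varphi_1(t,0)$.

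For the regularity statement $\varphi\in L^2(0,T;Y_m)$ the components split according to the boundary data. For $j\le m$ the datum lies in $L^2(0,l_j)$ and Proposition \ref{reg. Adj.} already gives $\varphi_j\in L^2(0,T;H^1(0,l_j))$ with $\int_0^T\|\varphi_x\|_{\mathbb{L}^2(\mathcal{T})}^2\le\tfrac{T+2M}{3}\|\varphi^T\|_{\mathbb{L}^2(\mathcal{T})}^2$; since $\alpha>N/2$ makes $t\mapsto\|\varphi(t)\|_{\mathbb{L}^2(\mathcal{T})}$ nondecreasing, one also has $\int_0^T\!\int_0^{l_j}\varphi_j^2\le T\|\varphi^T\|_{\mathbb{L}^2(\mathcal{T})}^2$, so $\|\varphi_j\|_{L^2(0,T;H^1)}\le C\|\varphi^T\|_{X_m}$. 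For $j>m$ the datum lies in $H_0^1(0,l_j)$ and I must gain one further derivative, i.e. control $\int_0^T\!\int_0^{l_j}(\partial_x^2\varphi_j)^2$. This is the Kato smoothing at the $H^1$ level: multiplying the equation by the weighted multiplier $x\partial_x^2\varphi_j$ (the analogue of the multiplier $x\varphi_j$ used at the $L^2$ level) and integrating by parts produces the interior term $\int_0^T\!\int_0^{l_j}(\partial_x^2\varphi_j)^2$ against boundary contributions involving $\partial_x^2\varphi_j(\cdot,l_j)$ and lower order traces; since $\partial_x^2\varphi_j(\cdot,l_j)\in L^2(0,T)$ by Proposition \ref{phi_xx in L^2}, this closes $\|\varphi_j\|_{L^2(0,T;H^2)}\le C\|\varphi^T\|_{X_m}$.

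For the explicit inequality I would assemble three identities. First, integrating the basic $\mathbb{L}^2$ identity in $t$, then integrating once more over $(0,T)$ and applying Fubini (the time-averaging trick $\int_0^T\!\int_t^T g=\int_0^T s\,g(s)\,ds\le T\int_0^T g$) gives $\sum_j\|\varphi_j^T\|_{L^2}^2\le\tfrac1T\sum_j\int_0^T\!\int_0^{l_j}\varphi_j^2+\sum_j\int_0^T(\partial_x\varphi_j(t,l_j))^2+2(\alpha-\tfrac N2)\int_0^T\varphi_1^2(t,0)$. Second, the weight $x\varphi_j$ yields the trace bound $\int_0^T(\partial_x\varphi_j(t,l_j))^2\le\|\varphi_j^T\|_{L^2}^2+\tfrac{3}{\sigma}\int_0^T\!\int_0^{l_j}(\partial_x\varphi_j)^2$, which I use to replace, for the indices $j>m$, the unwanted Neumann traces in the first inequality; the resulting $\sum_{j>m}\|\varphi_j^T\|_{L^2}^2$ then cancels against the left-hand side, leaving a clean bound for $\sum_{j\le m}\|\varphi_j^T\|_{L^2}^2$. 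Third, multiplying by $\partial_x^2\varphi_j$ gives $\tfrac12\tfrac{d}{dt}\int_0^{l_j}(\partial_x\varphi_j)^2=\tfrac12(\partial_x\varphi_j(t,l_j))^2+\tfrac12(\partial_x^2\varphi_j(t,l_j))^2-\tfrac12(\partial_x^2\varphi_j(t,0))^2$; the same time-averaging (discarding the nonpositive $\partial_x^2\varphi_j(t,0)$ contribution) bounds $\|\partial_x\varphi_j^T\|_{L^2}^2$ by $\tfrac1T\int_0^T\!\int_0^{l_j}(\partial_x\varphi_j)^2$, the trace $\int_0^T(\partial_x\varphi_j(t,l_j))^2$, and $\int_0^T(\partial_x^2\varphi_j(t,l_j))^2$, and a second application of the $x\varphi_j$ trace bound turns the Neumann trace into $\|\varphi_j^T\|_{L^2}^2+\tfrac3\sigma\int_0^T\!\int_0^{l_j}(\partial_x\varphi_j)^2$.

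Adding the estimate for $\sum_{j\le m}\|\varphi_j^T\|_{L^2}^2$ to that for $\sum_{j>m}\|\varphi_j^T\|_{H^1}^2=\sum_{j>m}\bigl(\|\varphi_j^T\|_{L^2}^2+\|\partial_x\varphi_j^T\|_{L^2}^2\bigr)$ then reproduces exactly the claimed inequality: the two applications of the $x\varphi_j$ trace bound combine into the coefficient $\tfrac1T+\tfrac6\sigma$ in front of $\sum_{j>m}\int_0^T\!\int_0^{l_j}(\partial_x\varphi_j)^2$, while the two copies of $\|\varphi_j^T\|_{L^2}^2$ produce the factor $2\sum_{j>m}\|\varphi_j^T\|_{L^2}^2$. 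I expect the main difficulty to be the bookkeeping of the boundary terms at the central vertex: because of the coupling $\sum_j\partial_x^2\varphi_j(t,0)=(\alpha-N)\varphi_1(t,0)$, the vertex contributions only close after summing over all edges, so the $L^2$-edges and the $H^1$-edges cannot be decoupled at that step; and the time-averaging device is essential, since it is what removes the otherwise uncontrollable initial-time quantity $\|\partial_x\varphi_j(0,\cdot)\|_{L^2}^2$ that a naive integration of the $\partial_x^2\varphi_j$-identity would leave behind.
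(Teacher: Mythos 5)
Your second half — the explicit estimate for $\|\varphi^T\|_{X_m}^2$ — is correct and follows essentially the paper's route: the time-averaged energy identity (the multiplier $t\varphi_j$, giving \eqref{eq 2.19}), the multiplier $x\varphi_j$ for the trace bound $\int_0^T\partial_x\varphi_j^2(t,l_j)\,dt\leq\|\varphi_j^T\|_{L^2}^2+\tfrac{3}{l_j}\int_0^T\!\int_0^{l_j}\partial_x\varphi_j^2$ (this is \eqref{eq 2.18}), and the multiplier $t\partial_x^2\varphi_j$ for $\|\partial_x\varphi_j^T\|_{L^2}^2$ (this is \eqref{eq 2.16}); your bookkeeping reproduces the coefficients $\tfrac1T+\tfrac6\sigma$ and $2\sum_{j>m}\|\varphi_j^T\|_{L^2}^2$ exactly.

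The gap is in the first half, the gain $\varphi_j\in L^2(0,T;H^2(0,l_j))$ for $j>m$. You propose the multiplier $x\,\partial_x^2\varphi_j$, but for the \emph{backward} equation \eqref{Adj} this weight has the wrong sign. Carrying out the integrations by parts with a weight $w(x)$ against $\partial_x^2\varphi_j$, the interior term $\int_0^T\!\int_0^{l_j}(\partial_x^2\varphi_j)^2$ appears with coefficient $\tfrac32 w'$, and the time-slice contribution is $\tfrac12\int_0^{l_j}w\,\bigl[(\partial_x\varphi_j)^2(T,x)-(\partial_x\varphi_j)^2(0,x)\bigr]dx$. With $w=x$ one is forced to write
$\tfrac32\int_0^T\!\int_0^{l_j}(\partial_x^2\varphi_j)^2 \le \tfrac12\int_0^{l_j}x(\partial_x\varphi_j)^2(0,x)\,dx + (\text{traces at }x=l_j) + (\text{lower order})$,
and the quantity $\int_0^{l_j}x(\partial_x\varphi_j)^2(0,x)\,dx$ — the gradient of the solution at the \emph{initial} time — is not controlled by $\|\varphi^T\|_{X_m}$ at this stage of the paper. (You yourself flag exactly this quantity as "otherwise uncontrollable" in your last paragraph, but your own multiplier for the first part produces it.) The paper instead uses $(l_j-x)\partial_x^2\varphi_j$: then $w'=-1$, the interior term and the initial-time gradient term both land on the favorable side (the latter is simply discarded), the final-time term $\tfrac12\int_0^{l_j}(l_j-x)(\partial_x\varphi_j)^2(T,x)\,dx\leq l_j\|\varphi_j^T\|_{H_0^1}^2$ is controlled precisely because $\varphi_j^T\in H_0^1(0,l_j)$ for $j>m$, and the boundary contribution at $x=0$, namely $\tfrac{l_j}{2}\int_0^T\partial_x^2\varphi_j^2(t,0)\,dt$, is handled by Proposition \ref{phi_xx in L^2}. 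Note also that your sketch of this step never invokes the hypothesis $\varphi_j^T\in H_0^1(0,l_j)$, which is a sign that something is off: the $H^2$ gain must use the regularity of the final datum, and it enters exactly through that final-time slice. Replacing $x$ by $l_j-x$ repairs the argument.
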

\begin{proof} Once we have that
	\begin{align*}	\|\varphi\|_{L^2(0,T;Y_m)}^2 %&=\int_0^T\|\varphi(t,\cdot)\|_{Y_m}^2dt
	&=\int_0^T\sum_{j=1}^m\|\varphi_j(t,\cdot)\|_{H^1(0,l_j)}^2dt+\int_0^T\sum_{j=m+1}^N\|\varphi_j(t,\cdot)\|_{H^2(0,l_j)}^2dt\\
		&\leq \|\varphi\|_{L^2(0,T;\mathbb{H}_e^1(\mathcal{T}))}^2+\sum_{j=m+1}^N\int_0^T\|\varphi_j(t,\cdot)\|_{H^2(0,l_j)}^2dt,
	\end{align*}
	from the first part of Proposition \ref{reg. Adj.}, it is sufficient to show that $\varphi_j \in L^2(0,T;H^2(0,l_j))$ for $j=m+1,\dots,N$ with
	\begin{align}\label{eq 2.14}
		\sum_{j=m+1}^N\|\varphi_j\|_{L^2(0,T;H^2(0,l_j))}^2\leq \tilde{C}\|\varphi^T\|_{X_m}^2
	\end{align}
	for some constant $\tilde{C}>0$.
	
	To do this, first assume $\varphi^T \in \mathcal{D}(A^*)\cap X_m$, the result for all $\varphi^T \in X_m$ follows by an argument of density. In this case, it is clear that $\varphi_j(t,\cdot)\in H^2(0,l_j)$. Multiplying the adjoint system \eqref{Adj} by $(l_j-x)\partial_{x}^{2}\varphi_{j}$, integrating by parts, and using the boundary conditions, we get
%	\begin{equation}\label{q_j phi_jxx}
%		\begin{split}
%			\frac{1}{2}\int_0^T(q_j\partial_{x}^{2}\varphi_{j}^2)(t,l_j)&+\frac{1}{2}\int_0^T(q_j\partial_{x}\varphi_{j}^2)(t,l_j)=-\frac{1}{2}\int_0^{l_j}(q_j\partial_{x}\varphi_{j}^2)(0,x)-\int_0^T(\partial_{x}q_{j}\partial_{x}\varphi_{j}\partial_{x}^{2}\varphi_{j})(t,l_j)\\
%			&-\frac{1}{2}\int_0^T\int_0^{l_j}\partial_{x}\varphi_{j}^2(\partial_{t}q_{j}+\partial_{x}q_{j}+\partial_{x}^{3}q_{j})+\frac{3}{2}\int_0^T\int_0^{l_j}\partial_{x}q_{j}\partial_{x}^{2}\varphi_{j}^2\\
%			&+\frac{1}{2}\int_0^Tq_j(t,0)\partial_{x}^{2}\varphi_{j}^2(t,0)+\frac{1}{2}\int_0^{l_j}q_j(T,x)\partial_{x}\varphi_{j}^2(T,x)\\
%			&+\frac{1}{2}\int_0^T(\partial_{x}^{2}q_{j}\partial_{x}\varphi_{j}^2)(t,l_j).
%		\end{split}
%	\end{equation}
%	Choosing $q_j=l_j-x$ it follows that
	\begin{align*}
		\frac{1}{2}\int_0^{l_j}(l_j-x)\partial_{x}\varphi_{j}^2(0,x)+\frac{3}{2}\int_0^T\int_0^{l_j}\partial_{x}^{2}\varphi_{j}^2=&\int_0^T(\partial_{x}\varphi_{j}\partial_{x}^{2}\varphi_{j})(t,l_j)+\frac{1}{2}\int_0^T\int_0^{l_j}\partial_{x}\varphi_{j}^2\\
		&+\frac{l_j}{2}\int_0^T\partial_{x}^{2}\varphi_{j}^2(t,0)+\frac{1}{2}\int_0^{l_j}(l_j-x)\partial_{x}\varphi_{j}^2(T,x).
	\end{align*}
	By Young's inequality and Propositions \ref{reg. Adj.} and  \ref{phi_xx in L^2}, we conclude that there exists $C>0$ such that
	\begin{align*}
		\int_0^T(\partial_{x}\varphi_{j}\partial_{x}^{2}\varphi_{j})(t,l_j)&\leq \frac{1}{2}\int_0^T\partial_{x}\varphi_{j}^2(t,l_j)+\frac{1}{2}\int_0^T\partial_{x}^{2}\varphi_{j}^2(t,l_j)\leq C\|\varphi^T\|_{\mathbb{L}^2(\mathcal{T})}^2.
	\end{align*}
	Defining $M=\displaystyle\max_{1\leq j\leq N}l_j$, for $j=m+1,\dots,N$, and, again, using Propositions \ref{reg. Adj.} and  \ref{phi_xx in L^2}, it follows that exists another constant, still denote by $C>0$, such that
%	\begin{align*}
%		\frac{3}{2}\int_0^T\int_0^{l_j}\partial_{x}^{2}\varphi_{j}^2&\leq C\|\varphi^T\|_{\mathbb{L}^2(\mathcal{T})}^2+\frac{M}{2}\|\partial_{x}\varphi_{j}^T\|_{L^2(0,l_j)}^2.
%	\end{align*}
%	In particular, 
	\begin{align*}
		\int_0^T\int_0^{l_j}\partial_{x}^{2}\varphi_{j}^2&
		%\leq\left(\frac{1+C_8^2+C_2^2+MC_8^2}{3}\right)\|\varphi^T\|_{\mathbb{L}^2(\mathcal{T})}^2+\frac{M}{3}\|\partial_{x}\varphi_{j}^T\|_{L^2(0,l_j)}^2
		\leq C\|\varphi^T\|_{X_m}^2.
	\end{align*}
	Combining this with the Proposition \ref{reg. Adj.} we obtain, $\varphi_{j}\in L^2(0,T;H^2(0,l_j)$ and
	\begin{align*}
		\|\varphi_j\|_{L^2(0,T;H^2(0,l_j))}\leq C^*\|\varphi^T\|_{X_m},\ \forall\ \varphi^T\in \mathcal{D}(A^*)\cap X_m,
	\end{align*}
	where $C^*>0$ is a constant and $j=m+1,\dots,N$. Since $\mathcal{D}(A^*)\cap X_m$ is dense in $X_m$, the map
	\begin{align*}
		\begin{array}{rcl}
			\mathcal{D}(A^*)\cap X_m&\rightarrow&L^2(0,T;H^2(0,l_j))\\
			\varphi_T&\mapsto&\varphi_j
		\end{array}
	\end{align*}
	extends continuously to the entire $X_m$ with
	\begin{align*}
		\|\varphi_j\|_{L^2(0,T;H^2(0,l_j))}\leq C^*\|\varphi^T\|_{X_m},\ \forall \varphi^T\in X_m
	\end{align*}
	and therefore \eqref{eq 2.14} holds with $\tilde{C}=(N-m)(C^*)^2$.
	
	\vspace{0.1cm}
	For the second part, multiplying the adjoint system \eqref{Adj} by $t\partial_{x}^{2}\varphi_{j}$, integrating by parts, we get
	\begin{align*}
		\frac{1}{2}\int_0^Tt\partial_{x}^{2}\varphi_{j}^2(t,l_j)+\frac{1}{2}\int_0^Tt\partial_{x}\varphi_{j}^2(t,l_j)+\frac{1}{2}\int_0^T\int_0^{l_j}\partial_{x}\varphi_{j}^2=&\frac{1}{2}\int_0^Tt\partial_{x}^{2}\varphi_{j}^2(t,0)+\frac{T}{2}\int_0^{l_j}\partial_{x}\varphi_{j}^2(T,x),
	\end{align*}
	where it follows that
	\begin{align}\label{eq 2.16}
		\|\partial_{x}\varphi_{j}^T\|_{L^2(0,l_j)}^2	\leq \frac{1}{T}\int_0^T\int_0^{l_j}\partial_{x}\varphi_{j}^2+\int_0^T\partial_{x}\varphi_{j}^2(t,l_j)+\int_0^T\partial_{x}^{2}\varphi_{j}^2(t,l_j),\ j=1,\dots,N.
	\end{align}
	On the other hand, by multiplying the adjoint system \eqref{Adj} by $x\varphi_j$, integrating by parts, and using the boundary conditions, we get 
%	\begin{align}\label{q_jphi_j}
%		\begin{aligned}
%			\frac{1}{2}\int_0^{l_j}q_j(T,x)\varphi_j^2(T,x)+\frac{3}{2}\int_0^T\int_0^{l_j}\partial_{x}q_{j}\partial_{x}\varphi_{j}^2=&\frac{1}{2}\int_0^T\left(q_j(t,0)+\partial_{x}^{2}q_{j}(t,0)\right)\varphi_1^2(t,0)\\
%			&+\int_0^Tq_j(t,0)\varphi_1(t,0)\partial_{x}^{2}\varphi_{j}(t,0)\\
%			&+\frac{1}{2}\int_0^Tq_j(t,l_j)\partial_{x}\varphi_{j}^2(t,l_j)\\
%			&+\frac{1}{2}\int_0^T\int_0^{l_j}\varphi_j^2(\partial_{t}q_{j}+\partial_{x}q_{j}+\partial_{x}^{3}q_{j})\\
%			&+\frac{1}{2}\int_0^{l_j}q_j(0,x)\varphi_j^2(0,x),
%		\end{aligned}
%	\end{align}
%	for $j=1,\dots,N$. Choosing $q_j=x$ in \eqref{q_jphi_j}, it follows that
	\begin{align*}
		\frac{l_j}{2}\int_0^T\partial_{x}\varphi_{j}^2(t,l_j)+\frac{1}{2}\int_0^T\int_0^{l_j}\varphi_j^2+\frac{1}{2}\int_0^{l_j}x\varphi_j^2(0,x)&=\frac{1}{2}\int_0^{l_j}x\varphi_j^2(T,x)+\frac{3}{2}\int_0^T\int_0^{l_j}\partial_{x}\varphi_{j}^2
	\end{align*}
	which implies, in particular, that
	\begin{align}\label{eq 2.18}
		\int_0^T\partial_{x}\varphi_{j}^2(t,l_j)\leq \int_0^{l_j}\varphi_j^2(T,x)+\frac{3}{l_j}\int_0^T\int_0^{l_j}\partial_{x}\varphi_{j}^2\ ,\ j=1,\dots,N.
	\end{align}
%	Now, assuming $q_j(t,0)=q_1(t,0)$ and summing on $j$ in \eqref{q_jphi_j} we obtain
%	\begin{align*}
%		\frac{1}{2}\sum_{j=1}^N&\int_0^{l_j}q_j(T,x)\varphi_j^2(T,x)+\frac{3}{2}\sum_{j=1}^N\int_0^T\int_0^{l_j}\partial_{x}q_{j}\partial_{x}\varphi_{j}^2=\frac{1}{2}\sum_{j=1}^N\int_0^T\int_0^{l_j}\varphi_j^2(\partial_{t}q_{j}+\partial_{x}q_{j}+\partial_{x}^{3}q_{j})\\
%		&+\left(\alpha-\frac{N}{2}\right)\int_0^Tq_1(t,0)\varphi_1^2(t,0)+\frac{1}{2}\sum_{j=1}^N\int_0^Tq_j(t,l_j)\partial_{x}\varphi_{j}^2(t,l_j)\\
%		&+\frac{1}{2}\sum_{j=1}^N\int_0^T\partial_{x}^{2}q_{j}(t,0)\varphi_{1}^2(t,0)+\frac{1}{2}\sum_{j=1}^N\int_0^{l_j}q_j(0,x)\varphi_j^2(0,x).
%	\end{align*}
	Now, multiplying the adjoint system \eqref{Adj} by $t\varphi_j$, integrating by parts, and using the boundary conditions, we also have that
	\begin{align}\label{eq 2.19}
		\sum_{j=1}^N\|\varphi_j^T\|_{L^2(0,l_j)}^2&\leq \frac{1}{T}\sum_{j=1}^N\int_0^T\int_0^{l_j}\varphi_j^2+2\left(\alpha-\frac{N}{2}\right)\int_0^T\varphi_1^2(t,0)+\sum_{j=1}^N\int_0^T\partial_{x}\varphi_{j}^2(t,l_j),
	\end{align}
	for $j=1,\dots,N$. Using \eqref{eq 2.16} and \eqref{eq 2.19} we get that
	\begin{align*}
		\|\varphi^T\|_{X_m}^2%=&\sum_{j=1}^m\|\varphi_j^T\|_{L^2(0,l_j)}^2+\sum_{j=m+1}^N\|\varphi_j^T\|_{H_0^1(0,l_j)}^2\\
		%=&\sum_{j=1}^m\|\varphi_j^T\|_{L^2(0,l_j)}^2+\sum_{j=m+1}^N\|\partial_{x}\varphi_{j}^T\|_{L^2(0,l_j)}^2\\
		\leq &\frac{1}{T}\sum_{j=1}^N\int_0^T\int_0^{l_j}\varphi_j^2+2\left(\alpha-\frac{N}{2}\right)\int_0^T\varphi_1^2(t,0)+\sum_{j=1}^N\int_0^T\partial_{x}\varphi_{j}^2(t,l_j)\\
		&+\frac{1}{T}\sum_{j=m+1}^N\int_0^T\int_0^{l_j}\partial_{x}\varphi_{j}^2+\sum_{j=m+1}^N\int_0^T\partial_{x}\varphi_{j}^2(t,l_j)+\sum_{j=m+1}^N\int_0^T\partial_{x}^{2}\varphi_{j}^2(t,l_j).
	\end{align*}
	%Therefore
	%\begin{align*}
	%	\|\varphi^T\|_{X_m}^2&\leq \frac{1}{T}\sum_{j=1}^N\int_0^T\int_0^{l_j}\varphi_j^2+\frac{1}{T}\sum_{j=m+1}^N\int_0^T\int_0^{l_j}\partial_{x}\varphi_{j}^2+2\left(\alpha-\frac{N}{2}\right)\int_0^T\varphi_1^2(t,0)\\
	%	&+\sum_{j=1}^m\int_0^T\partial_{x}\varphi_{j}^2(t,l_j)+2\sum_{j=m+1}^N\int_0^T\partial_{x}\varphi_{j}^2(t,l_j)+\sum_{j=m+1}^N\int_0^T\partial_{x}^{2}\varphi_{j}^2(t,l_j).
	%\end{align*}
Observe that from \eqref{eq 2.18}, the following holds
	\begin{align*}
		2\sum_{j=m+1}^N\int_0^T\partial_{x}\varphi_{j}^2(t,l_j)\leq 2\sum_{j=m+1}^N\int_0^{l_j}\varphi_j^2(T,x)+\sum_{j=m+1}^N\frac{6}{l_j}\int_0^T\int_0^{l_j}\partial_{x}\varphi_{j}^2,
	\end{align*}
	and consequently
	\begin{align*}
		\|\varphi^T\|_{X_m}^2\leq& \frac{1}{T}\sum_{j=1}^N\int_0^T\int_0^{l_j}\varphi_j^2+\left(\frac{1}{T}+\frac{6}{\sigma}\right)\sum_{j=m+1}^N\int_0^T\int_0^{l_j}\partial_{x}\varphi_{j}^2+2\left(\alpha-\frac{N}{2}\right)\int_0^T\varphi_1^2(t,0)\\
		&+2\sum_{j=m+1}^N\|\varphi_j^T\|_{L^2(0,l_j)}^2+\sum_{j=1}^m\int_0^T\partial_{x}\varphi_{j}^2(t,l_j)+\sum_{j=m+1}^N\int_0^T\partial_{x}^{2}\varphi_{j}^2(t,l_j),
	\end{align*}
	giving the result. 
	\end{proof}

\section{A study of entire functions}\label{sec3}
From this point, our analysis focuses on establishing the observability inequality. Let us first discuss the observability inequality.
\subsection{Observability inequality} We are here interested in proving the following observability inequality, that is, the existence of a constant $C>0$ such that
\begin{align}\label{Obs. Ineq.}
	\|\varphi^T\|_{\mathbb{L}^2(\mathcal{T})}^2\leq C\left(\sum_{j=1}^{m}\|\partial_{x}\varphi_{j}(\cdot,l_j)\|_{L^2(0,T)}^2+\sum_{j=m+1}^{N}\|\partial_{x}^{2}\varphi_{j}(\cdot,l_j)\|_{L^2(0,T)}^2\right),\ \forall\varphi^T \in \mathbb{L}^2(\mathcal{T}).
\end{align}
%By similar arguments it can be seen that case where $u_0 \in \mathbb{L}^2(\mathcal{T})$ is arbitrary and $u_0=0$ comes down to showing the inequality
%\begin{align*}
%	\|\varphi(0,\cdot)\|_{\mathbb{L}^2(\mathcal{T})}^2\leq \tilde{C}_{10}\left(\sum_{j=1}^{m}\|\partial_{x}\varphi_{j}(\cdot,l_j)\|_{L^2(0,T)}^2+\sum_{j=m+1}^{N}\|\partial_{x}^{2}\varphi_{j}(\cdot,l_j)\|_{L^2(0,T)}^2\right),\ \forall\varphi^T \in \mathbb{L}^2(\mathcal{T})
%\end{align*}
%for some positive constant $\tilde{C}_{10}$. But,
%\begin{align*}
%	\|\varphi(0,\cdot)\|_{\mathbb{L}^2(\mathcal{T})}=\|S(T)\varphi^T\|_{\mathbb{L}^2(\mathcal{T})}\leq \|\varphi^T\|_{\mathbb{L}^2(\mathcal{T})}
%\end{align*}
%and therefore, we fall back on the task of showing the observability inequality \eqref{Obs. Ineq.}. We conclude that our controllability problem comes down to verifying \eqref{Obs. Ineq.}.
Observe that to check \eqref{Obs. Ineq.} it is sufficient to check
\begin{align}\label{Obs. Ineq. 2}
	\|\varphi^T\|_{X_m}^2\leq \tilde{C}\left(\sum_{j=1}^{m}\|\partial_{x}\varphi_{j}(\cdot,l_j)\|_{L^2(0,T)}^2+\sum_{j=m+1}^{N}\|\partial_{x}^{2}\varphi_{j}(\cdot,l_j)\|_{L^2(0,T)}^2\right),\ \forall\varphi^T \in X_m,
\end{align}
for some positive constant $\tilde{C}$.

Indeed, given $\varphi^T \in \mathbb{L}^2(\mathcal{T})$, since $X_m$ is dense in $\mathbb{L}^2(\mathcal{T})$ there exists a sequence $(\varphi^{T,n})_{n \in \mathbb{N}}\subset X_m$ such that $\varphi^{T,n}\rightarrow\varphi^T$ in $\mathbb{L}^2(\mathcal{T})$. From Propositions \ref{reg. Adj.} and \ref{phi_xx in L^2} it follows that $\partial_{x}\varphi_{j}^n(\cdot,l_j)\rightarrow \partial_{x}\varphi_{j}(\cdot,l_j)$ and $\partial_{x}^{2}\varphi_{j}^n(\cdot,l_j)\rightarrow \partial_{x}^{2}\varphi_{j}(\cdot,l_j)$ in $L^2(0,T)$. Furthermore, using the Poincaré inequality, we get, for some $c>0$,
\begin{align*}
	\|\varphi^{T,n}\|_{\mathbb{L}^2(\mathcal{T})}\leq c\|\varphi^{T,n}\|_{X_m}.
\end{align*}
If \eqref{Obs. Ineq. 2} holds, then
\begin{align*}
	\|\varphi^{T,n}\|_{\mathbb{L}^2(\mathcal{T})}^2\leq c^2\tilde{C}\left(\sum_{j=1}^{m}\|\partial_{x}\varphi_{j}^n(\cdot,l_j)\|_{L^2(0,T)}^2+\sum_{j=m+1}^{N}\|\partial_{x}^{2}\varphi_{j}^n(\cdot,l_j)\|_{L^2(0,T)}^2\right)
\end{align*}
and passing to the limit, it follows that
\begin{align*}
	\|\varphi^{T}\|_{\mathbb{L}^2(\mathcal{T})}^2\leq c^2\tilde{C}\left(\sum_{j=1}^{m}\|\partial_{x}\varphi_{j}(\cdot,l_j)\|_{L^2(0,T)}^2+\sum_{j=m+1}^{N}\|\partial_{x}^{2}\varphi_{j}(\cdot,l_j)\|_{L^2(0,T)}^2\right)
\end{align*}
so \eqref{Obs. Ineq.} holds with $C=c^2\tilde{C}>0$. 
\begin{remark} 
The advantage of working with data in $X_m$ for the adjoint system is that, in addition to the additional regularity for the solution, the estimate given in the Proposition \ref{W.P.L^2xH_0^1} carries information related to the two types of controls used, while the corresponding estimate for data in $\mathbb{L}^2(\mathcal{T})$ carries only information related to the Neumann controls types.
\end{remark}

%The following two results will be useful to investigate the inequality \eqref{Obs. Ineq. 2}.
From now on, we will focus our efforts on describing the lengths $l_j=L>0$ for which the observability \eqref{Obs. Ineq. 2} holds. We strictly follow the argument used in \cite{Rosier}.
\begin{lemma}\label{lemma 3.4}
	Let $T>0$ be given. If the observability inequality \eqref{Obs. Ineq. 2} does not occur, then there exists $\varphi^T \in X_m$ with $\|\varphi^T\|_{X_m}=1$ for which the corresponding solution $\varphi$ of \eqref{Adj} satisfies
	\begin{align}\label{eq 3.6}
		\partial_{x}\varphi_{j}(\cdot,l_j)=0,\ j=1,\dots,m\text{ \ \ and \ \ }\partial_{x}^{2}\varphi_{j}(\cdot,l_j)=0,\ j=m+1,\dots,N.
	\end{align}
\end{lemma}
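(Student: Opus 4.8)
The plan is to argue by contradiction, following the classical compactness--uniqueness scheme of Rosier. If \eqref{Obs. Ineq. 2} fails for every constant, then taking the constant equal to $n$ produces a sequence $(\varphi^{T,n})_{n\in\mathbb{N}}\subset X_m$ with $\|\varphi^{T,n}\|_{X_m}=1$ whose associated solutions $\varphi^n$ of \eqref{Adj} satisfy
\[
\sum_{j=1}^{m}\|\partial_{x}\varphi_{j}^n(\cdot,l_j)\|_{L^2(0,T)}^2+\sum_{j=m+1}^{N}\|\partial_{x}^{2}\varphi_{j}^n(\cdot,l_j)\|_{L^2(0,T)}^2\leq \frac{1}{n}\xrightarrow{n\to\infty}0.
\]
Since $(\varphi^{T,n})$ is bounded in $X_m$, after extracting a subsequence I may assume $\varphi^{T,n}\rightharpoonup \varphi^T$ weakly in $X_m$; by Proposition \ref{W.P.L^2xH_0^1} the solutions $\varphi^n$ are bounded in $L^2(0,T;Y_m)$, and by linearity together with continuous dependence they converge (weakly) to the solution $\varphi$ of \eqref{Adj} with final datum $\varphi^T$.

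The crucial step is to upgrade this to strong convergence of the data, which is what forces $\|\varphi^T\|_{X_m}=1$ and hence $\varphi^T\neq 0$. Here I would use the explicit inequality of Proposition \ref{W.P.L^2xH_0^1} applied to the differences $\varphi^{T,n}-\varphi^{T,k}$: it controls $\|\varphi^{T,n}-\varphi^{T,k}\|_{X_m}^2$ by interior integrals of $\varphi^n-\varphi^k$ (namely the terms $\int_0^T\int_0^{l_j}|\varphi_j^n-\varphi_j^k|^2$, the terms $\int_0^T\int_0^{l_j}|\partial_x(\varphi_j^n-\varphi_j^k)|^2$ for $j>m$, the vertex term $\int_0^T|\varphi_1^n(t,0)-\varphi_1^k(t,0)|^2$, and $\sum_{j>m}\|\varphi_j^{T,n}-\varphi_j^{T,k}\|_{L^2(0,l_j)}^2$) plus precisely the observed boundary terms, which are $O(1/n+1/k)$. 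To show the interior terms are Cauchy I would invoke compactness: $\varphi^n$ is bounded in $L^2(0,T;Y_m)$ while $\partial_t\varphi^n=-\partial_x\varphi^n-\partial_x^3\varphi^n$ is bounded in $L^2(0,T;\mathbb{H}^{-2}(\mathcal{T}))$, so the Aubin--Lions--Simon lemma yields strong convergence of $\varphi^n$ in $L^2(0,T;\mathbb{L}^2(\mathcal{T}))$, and, interpolating the $Y_m$ bound with this, of $\partial_x\varphi_j^n$ in $L^2(0,T;L^2(0,l_j))$ for $j>m$. The junction trace $\varphi_1^n(\cdot,0)$ converges strongly in $L^2(0,T)$ by continuity of the evaluation map on $\mathbb{H}^1_e(\mathcal{T})$ combined with the compactness, and $\varphi_j^{T,n}$ converges strongly in $L^2(0,l_j)$ by Rellich, being bounded in $H_0^1(0,l_j)$ for $j>m$. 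Hence $(\varphi^{T,n})$ is Cauchy in $X_m$, so $\varphi^{T,n}\to\varphi^T$ strongly and $\|\varphi^T\|_{X_m}=1$.

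Finally I would pass to the limit in the vanishing observation. As $X_m\hookrightarrow \mathbb{L}^2(\mathcal{T})$, strong convergence in $X_m$ gives strong convergence in $\mathbb{L}^2(\mathcal{T})$, and the trace maps $\varphi^T\mapsto\partial_x\varphi_j(\cdot,l_j)$ and $\varphi^T\mapsto\partial_x^2\varphi_j(\cdot,l_j)$ are continuous from $\mathbb{L}^2(\mathcal{T})$ into $L^2(0,T)$ by estimate \eqref{est. Adj.-2} of Proposition \ref{reg. Adj.} and by Proposition \ref{phi_xx in L^2}, respectively. Thus $\partial_x\varphi_j^n(\cdot,l_j)\to\partial_x\varphi_j(\cdot,l_j)$ for $j\le m$ and $\partial_x^2\varphi_j^n(\cdot,l_j)\to\partial_x^2\varphi_j(\cdot,l_j)$ for $j>m$, both in $L^2(0,T)$; since the left-hand sides tend to $0$, the limits vanish, which is exactly \eqref{eq 3.6}.

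The main obstacle I anticipate is the strong-convergence (compactness) step, and specifically the treatment of the central-vertex trace $\varphi_1^n(\cdot,0)$. Unlike the single-interval case, the interior terms in the estimate of Proposition \ref{W.P.L^2xH_0^1} carry this coupling quantity through the junction condition $\sum_j\partial_x^2\varphi_j(t,0)=(\alpha-N)\varphi_1(t,0)$, so one must check that the Aubin--Lions compactness is compatible with the graph's vertex conditions and that the trace at $x=0$ passes to the limit strongly rather than merely weakly. Establishing this compactness on the whole network $\mathcal{T}$, rather than on a single edge, is the delicate point; once it is in place, the remaining limits are routine.
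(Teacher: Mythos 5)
Your proposal is correct and follows essentially the same compactness--uniqueness scheme as the paper: normalize a minimizing sequence, apply the estimate of Proposition \ref{W.P.L^2xH_0^1} to differences, use Aubin--Lions plus Rellich to make the interior, vertex, and final-data terms Cauchy, and pass to the limit in the boundary traces via Propositions \ref{reg. Adj.} and \ref{phi_xx in L^2}. The only cosmetic difference is your handling of the vertex trace $\varphi_1^n(\cdot,0)$ (spatial interpolation of the $H^1$ bound with the strong $L^2$ convergence), whereas the paper instead uses the hidden time-regularity of Proposition \ref{trace-Adj.}, namely boundedness in $H^{1/3}(0,T)$ and the compact embedding $H^{1/3}(0,T)\hookrightarrow L^2(0,T)$; likewise the paper gets strong $L^2(0,T;H^1)$ convergence for $j>m$ by applying Aubin--Lions directly to the triple $H^2\hookrightarrow H^1\hookrightarrow H^{-2}$ rather than by interpolation.
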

\begin{proof}
	Suppose that \eqref{Obs. Ineq. 2} is false. Then given $n \in \mathbb{N}$ there exists $\varphi^{T,n}\in X_m\backslash\{0\}$ such that
	\begin{align}\label{eq 3.7}		\sum_{j=1}^{m}\|\partial_{x}\varphi_{j}^n(\cdot,l_j)\|_{L^2(0,T)}^2+\sum_{j=m+1}^{n}\|\partial_{x}^{2}\varphi_{j}^n(\cdot,l_j)\|_{L^2(0,T)}^2<\frac{1}{n},\ \ \forall n \in \mathbb{N}.
	\end{align}	
	First note that thanks to Proposition \ref{W.P.L^2xH_0^1} we have for $j=1,\dots,N$
	\begin{align*}
		\|\varphi_j^n\|_{L^2(0,T,H^1(0,l_j))}\leq \|\varphi^n\|_{L^2(0,T;\mathbb{H}_e^1(\mathcal{T}))}\leq \|\varphi^n\|_{L^2(0,T;Y_m)}\leq C_9\|\varphi^{T,n}\|_{X_m}=C_9,
	\end{align*}
	that is, $(\varphi_j^n)_{n \in \mathbb{N}}$ is bounded in $L^2(0,T;H^1(0,l_j))$. On the other hand, using Lemma \ref{Ay in H^-2}
	\begin{align*}
		\|\partial_{t}\varphi_{j}(t,\cdot)\|_{H^{-2}(0,l_j)}=\|-\partial_{x}\varphi_{j}^n(t,\cdot)-\partial_{x}^{3}\varphi_{j}^n(t,\cdot)\|_{H^{-2}(0,l_j)}\leq \|\varphi_j^n(t,\cdot)\|_{H^1(0,l_j)}
	\end{align*}
	from where do we obtain
	\begin{align*}
		\|\partial_{t}\varphi_{j}^n\|_{L^2(0,T;H^{-2}(0,l_j))}\leq \|\varphi_j^n\|_{L^2(0,T;H^1(0,l_j))}\leq C.
	\end{align*}
	Thus $(\partial_{t}\varphi_{j}^n)_{n \in \mathbb{N}}$ is bounded in $L^2(0,T;H^{-2}(0,l_j))$. Since the first embedding in 
$
		H^1(0,l_j)\hookrightarrow L^2(0,l_j)\hookrightarrow H^{-2}(0,l_j)
$	is compact, from Aubin-Lions Lemma (see \cite{Aubin,Simon}) it follows that $(\varphi_j^n)_{n\in \mathbb{N}}$ is relatively compact in $L^2(0,T;L^2(0,l_j))$ and therefore it admits a convergent subsequence in $L^2(0,T;L^2(0,l_j))$.
	
	Analogously, the Proposition \ref{W.P.L^2xH_0^1} gives us for $j=m+1,\dots,N$
	\begin{align*}
		\|\varphi_j^n\|_{L^2(0,T;H^2(0,l_j))}\leq \|\varphi^n\|_{L^2(0,T;Y_m)}\leq C \|\varphi^{T,n}\|_{X_m},
	\end{align*}
	that is, $(\varphi_j^n)_{n \in \mathbb{N}}$ is bounded in $L^2(0,T;H^2(0,l_j))$. As $(\partial_{t}\varphi_{j}^n)_{n \in \mathbb{N}}$ is bounded in $L^2(0,T;H^{-2}(0,l_j))$ and the first embedding in 
$
		H^2(0,l_j)\hookrightarrow H^1(0,l_j)\hookrightarrow H^{-2}(0,l_j)
$	is compact, from Aubin-Lions Lemma (see \cite{Aubin,Simon}) we have $(\varphi_j^n)_{n \in \mathbb{N}}$ relatively compact in $L^2(0,T;H^1(0,l_j))$, that is, it has a convergent subsequence in $L^2(0,T;H^1(0,l_j))$.
	
	Now, due to Proposition \ref{trace-Adj.}, the sequence $(\varphi_1^n(\cdot,0))_{n\in \mathbb{N}}$ is bounded in $H^\frac{1}{3}(0,T)$ and thanks to compact embedding $H^\frac{1}{3}(0,T)\hookrightarrow L^2(0,T)$ we can extract from it a convergent subsequence in $L^2(0,T)$. In the same way, $(\varphi_j^{T,n})_{n \in \mathbb{N}}$ has a convergent subsequence in $L^2(0,l_j)$ due to compact embedding $H_0^1(0,l_j)\hookrightarrow L^2(0,l_j)$. In summary, there exists a subsequence $(\varphi^{T,n_k})_{k\in \mathbb{N}}$ of $(\varphi^{T,n})_{n\in \mathbb{N}}$ satisfying the following items:
	\begin{align}\label{subseq.}
		\begin{cases}
			(\varphi_j^{n_k})_{k\in \mathbb{N}} \text{ converges in }L^2(0,T;L^2(0,l_j)),\ j=1,\dots,N,\\
			(\varphi_j^{n_k})_{k\in \mathbb{N}} \text{ converges in } L^2(0,T;H^1(0,l_j)),\  j=m+1,\dots,N,\\
			(\varphi_1^{n_k}(\cdot,0))_{k\in \mathbb{N}} \text{ converges in } L^2(0,T),\\
			(\varphi_j^{T,n_k})_{k\in \mathbb{N}} \text{ converges in } L^2(0,l_j),\  j=m+1,\dots,N.
		\end{cases}
	\end{align}

Now, let $(\varphi^{T,n_k})_{k\in \mathbb{N}}$ the  subsequence of $(\varphi^{T,n})_{n\in \mathbb{N}}$ satisfying \eqref{subseq.}. From Proposition \ref{W.P.L^2xH_0^1} we have	
	\begin{align*}
		\|\varphi^{T,n_k}-\varphi^{T,n_r}\|_{X_m}^2\leq& \frac{1}{T}\sum_{j=1}^N\|\varphi_j^{n_k}-\varphi_j^{n_r}\|_{L^2(0,T;L^2(0,l_j)}^2+\left(\frac{1}{T}+\frac{6}{\sigma}\right)\sum_{j=m+1}^N\|\varphi_j^{n_k}-\varphi_j^{n_r}\|_{L^2(0,T;H^1(0,l_j)}^2\\
		&+2\left(\alpha-\frac{N}{2}\right)\|\varphi_1^{n_k}(\cdot,0)-\varphi_1^{n_r}(\cdot,0)\|_{L^2(0,T}^2+2\sum_{j=m+1}^N\|\varphi_j^{T,n_k}-\varphi_j^{T,n_r}\|_{L^2(0,l_j)}^2\\
		&+\sum_{j=1}^m\|\partial_{x}\varphi_{j}^{n_k}(\cdot,l_j)-\partial_{x}\varphi_{j}^{n_r}(\cdot,l_j)\|_{L^2(0,T)} ^2+\sum_{j=m+1}^N\|\partial_{x}^{2}\varphi_{j}^{n_k}(\cdot,l_j)-\partial_{x}^{2}\varphi_{j}^{n_r}(\cdot,l_j)\|_{L^2(0,T)}^2,
	\end{align*}
	which gives $(\varphi^{T,n_k})_{k\in \mathbb{N}}$ is a Cauchy sequence in $X_m$ so, there exists $\varphi^T\in X_m$ such that $\varphi^{T,n_k}\rightarrow \varphi^T$ in $X_m$. Consider $\varphi=S(T-\cdot)\varphi^T$. Since $X_m\hookrightarrow \mathbb{L}^2(\mathcal{T})$ continuously, from Propositions \ref{reg. Adj.} and \ref{phi_xx in L^2} ensures that
	\begin{align*}	
	\begin{cases}	
		\partial_{x}\varphi_{j}^{n_k}(\cdot,l_j)\rightarrow\partial_{x}\varphi_{j}(\cdot,l_j)\text{ \ in \ }L^2(0,T),\ \  j=1,\dots,m,\\		
		\partial_{x}^{2}\varphi_{j}^{n_k}(\cdot,l_j)\rightarrow\partial_{x}^{2}\varphi_{j}(\cdot,l_j)\text{ \ in \ }L^2(0,T),\ \  j=m+1,\dots,N.
		\end{cases}
	\end{align*}
	From \eqref{eq 3.7} we get
	\begin{align*}
		\partial_{x}\varphi_{j}(\cdot,l_j)=0,\ \  j=1,\dots,m
		\text{ \ \ and \ \ }
		\partial_{x}^{2}\varphi_{j}(\cdot,l_2)=0,\ \  j=m+1,\dots,N.
	\end{align*}
Since $\|\varphi^{T,n}\|_{X_m}=1$ for every $n \in \mathbb{N}$, we also have, in the limit, that $\|\varphi^T\|_{X_m}=1$. So, Lemma \ref{lemma 3.4} holds.
\end{proof}
From now on, we deal with the case $\alpha=N$.
\begin{lemma}\label{lemma 3.5}
	Let $T>0$ and denote by $N_T$ the space of data $\varphi^T \in X_m$ whose respective solutions $\varphi=S(T-\cdot)\varphi^T$ of \eqref{Adj} satisfy \eqref{eq 3.6}. If $N_T\ne \{0\}$ then there exists $\lambda \in \mathbb{C}$ and $\varphi\in \mathbb{H}^3(\mathcal{T})\backslash\{0\}$ such that
	\begin{align}\label{spectral problem}
		\begin{cases}			\lambda\varphi_j+\varphi_{j}^{\prime}+\varphi_{j}^{\prime\prime\prime}=0,&x \in (0,l_j),\  j=1,\dots,N,\\
			\varphi_j(0)=\varphi_1(0),& j=1,\dots,N,\\
			\displaystyle\sum_{j=1}^N\varphi_{j}^{\prime\prime}(0)=0,&\\		
			\varphi_j(l_j)=\varphi_{j}^{\prime}(0)=0,& j=1,\dots,N,\\
			\varphi_{j}^{\prime}(l_j)=0,& j=1,\dots,m,\\
			\varphi_{j}^{\prime\prime}(l_j)=0,& j=m+1,\dots,N.
		\end{cases}
	\end{align}
\end{lemma}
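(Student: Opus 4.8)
The plan is to follow Rosier's compactness--invariance--spectral scheme. Set $N_T=\{\varphi^T\in X_m:\ \varphi=S(T-\cdot)\varphi^T\text{ satisfies }\eqref{eq 3.6}\}$. Since the maps $\varphi^T\mapsto\partial_{x}\varphi_{j}(\cdot,l_j)$ (for $j\le m$) and $\varphi^T\mapsto\partial_{x}^{2}\varphi_{j}(\cdot,l_j)$ (for $j>m$) are continuous on $X_m$ by Propositions \ref{reg. Adj.} and \ref{phi_xx in L^2}, the set $N_T$ is a closed subspace of $X_m$. The goal is to produce a nonzero eigenpair of the generator $A^*$ sitting inside $N_T$: its eigenfunction will be exactly the object solving \eqref{spectral problem}, and the boundary conditions will come half from the domain $\mathcal{D}(A^*)$ and half from the defining vanishing traces of $N_T$.

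First I would show that $\dim N_T<\infty$. For $\varphi^T\in N_T$ the last two sums in the estimate of Proposition \ref{W.P.L^2xH_0^1} vanish, so $\|\varphi^T\|_{X_m}^2$ is bounded by the $L^2(0,T;L^2(0,l_j))$-norms of the $\varphi_j$, the $L^2(0,T;H^1(0,l_j))$-norms of the $\varphi_j$ for $j>m$, the $L^2(0,T)$-norm of $\varphi_1(\cdot,0)$, and the $L^2(0,l_j)$-norms of the $\varphi_j^T$ for $j>m$. Each of these quantities depends compactly on $\varphi^T$ with respect to $\|\cdot\|_{X_m}$: this is precisely the Aubin--Lions compactness and the compact embeddings $H^{1/3}(0,T)\hookrightarrow L^2(0,T)$ and $H^1_0\hookrightarrow L^2$ already exploited in the proof of Lemma \ref{lemma 3.4}. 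Hence the identity map on $N_T$ is compact, so $N_T$ is finite dimensional.

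Next, and this is the crux, I would prove that $N_T$ is invariant under $A^*$ and that $N_T\subseteq\mathcal{D}(A^*)$. The mechanism is time differentiation: for $\varphi^T\in N_T$ the function $\partial_t\varphi=-A^*\varphi$ solves the same adjoint system with the \emph{same} homogeneous boundary conditions — here the reduction $\alpha=N$ is exactly what collapses the central condition into $\sum_{j}\partial_x^2\varphi_j(0)=0$ — so differentiating the identities \eqref{eq 3.6} in $t$ shows that the observation traces of $-A^*\varphi^T$ again vanish, i.e. $A^*\varphi^T\in N_T$. The delicate point, which I expect to be the main obstacle, is legitimizing this differentiation and the membership $N_T\subset\mathcal{D}(A^*)$; finite dimensionality is the essential input that removes the difficulty, since $A^*$ has compact resolvent with discrete spectrum, every $\varphi^T\in N_T$ is an analytic vector, and $t\mapsto S(t)\varphi^T$ is an exponential sum. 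Consequently $t\mapsto\bigl(\partial_x\varphi_j(t,l_j),\partial_x^2\varphi_j(t,l_j)\bigr)$ is real-analytic, so its vanishing on $(0,T)$ propagates and yields $S(t)N_T\subseteq N_T$ for all $t\ge0$, whence $A^*N_T\subseteq N_T$.

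Finally, I would complexify. The operator $A^*$ then restricts to a linear endomorphism of the nonzero finite-dimensional complex space $N_T\otimes\mathbb{C}$, hence admits an eigenvalue $\mu\in\mathbb{C}$ and an eigenfunction $\varphi\in\mathcal{D}(A^*)\setminus\{0\}\subset\mathbb{H}^3(\mathcal{T})$ with $\varphi_j'+\varphi_j'''=\mu\varphi_j$; putting $\lambda=-\mu$ gives the first line of \eqref{spectral problem}. The remaining conditions follow by inspection: from $\varphi\in\mathcal{D}(A^*)\subset\mathbb{H}^1_r(\mathcal{T})\cap\mathbb{H}^1_e(\mathcal{T})$ one reads off $\varphi_j(l_j)=0$, $\varphi_j(0)=\varphi_1(0)$, $\varphi_j'(0)=0$, and (using $\alpha=N$) $\sum_{j}\varphi_j''(0)=0$; while $\varphi\in N_T$, together with the fact that its adjoint trajectory is a scalar multiple of $\varphi$ and $e^{\mu(T-t)}\neq0$, forces $\varphi_j'(l_j)=0$ for $j\le m$ and $\varphi_j''(l_j)=0$ for $j>m$. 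These are exactly the constraints of \eqref{spectral problem}, which completes the argument.
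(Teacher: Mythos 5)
Your proposal is correct and follows exactly the route the paper takes: the paper's proof of Lemma \ref{lemma 3.5} consists of a single sentence invoking the compactness--invariance--eigenvalue argument of \cite[Lemma 3.4]{Rosier}, which is precisely the scheme you spell out (finite dimensionality of $N_T$ via Proposition \ref{W.P.L^2xH_0^1} and the compactness already exploited in Lemma \ref{lemma 3.4}, invariance of $N_T$ under $A^*$ using $\alpha=N$, and an eigenvalue of the complexified restriction, with the boundary conditions read off from $\mathcal{D}(A^*)$ and from the vanishing traces). Your write-up is in fact more detailed than the paper's, which delegates every one of these steps to the citation.
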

\begin{proof} Using the arguments as those given in \cite[Lemma 3.4]{Rosier}, follows that if $N_T \neq \emptyset$, the map $\varphi^T \in N_T \mapsto \tilde{A}\left(N_T\right) \subset \mathbb{C} N_T$ (where $\mathbb{C} N_T$ denote the complexification of $N_T$ ) has (at least) one eigenvalue; hence, there exists $\lambda \in \mathbb{C}$ and $\varphi^T\in N_T\backslash\{0\}\subset \mathcal{D}(A^*)\subset \mathbb{H}^3(\mathcal{T})$ such that \eqref{spectral problem} holds, which give us the lemma. 
\end{proof}

\subsection{Entire function related to each condition}
Each observability is linked to a corresponding entire function, which plays a crucial role in the analysis. From this point forward, we will organize our analysis into the following  six cases:
\begin{align}\label{cases}
\begin{aligned}
	&\bullet\ N=2 \text{ and }m=1;\\
	&\bullet\ N\geq 3 \text{ and } m=1;\\
	&\bullet\ N\geq 3 \text{ and } m=N-1;\\
	&\bullet\ N>3 \text{ and } 1<m<N-1;\\
	&\bullet N\geq 2\text{ and }m=N\text{ (Full Neumann)}; \\
	&\bullet N\geq 2\text{ and }m=0\ \text{ (Full Dirichlet)}, \\
\end{aligned}
\end{align}
remembering that $l_j=L$, for all $j$.

\subsubsection{\textbf{Case \texorpdfstring{$N=2$}{N=2} and $m=1$}} In this case, we consider a slightly more general problem, namely 
\begin{align}\label{spectral problem-2}
	\begin{cases}
		\lambda\varphi_j+\varphi_{j}^{\prime}+\varphi_{j}^{\prime\prime\prime}=0,&x \in (0,l_j),\ j=1,2,\\
		\varphi_2(0)=\varphi_1(0),&\\
		\varsigma_{Ne}\varphi_{1}^{\prime\prime}(0)+\varsigma_{Di}\varphi_{2}^{\prime\prime}(0)=0,&\\		
		\varphi_j(l_j)=\varphi_{j}^{\prime}(0)=0,&j=1,2,\\
		\varphi_{1}^{\prime}(l_1)=\varphi_{2}^{\prime\prime}(l_2)=0,&
	\end{cases}
\end{align}
where $\varsigma_{Ne}$, $\varsigma_{Di}>0$. Problem \eqref{spectral problem} corresponds to the case $\varsigma_{Ne}=\varsigma_{Di}=1$. We can prove that the observability inequality \eqref{Obs. Ineq. 2} holds for any $L>0$. This is a consequence of the following lemma.
\begin{lemma}\label{lemma 3.6}
	Let $L>0$ and suppose $l_1=l_2=L$. For any $\lambda\in \mathbb{C}$, does not exist $\varphi\in \mathbb{H}^3(\mathcal{T})\backslash\{0\}$ satisfying \eqref{spectral problem-2}.
\end{lemma}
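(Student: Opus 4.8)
The plan is to argue by contradiction: suppose there exist $\lambda\in\mathbb{C}$ and $\varphi=(\varphi_1,\varphi_2)\in\mathbb{H}^3(\mathcal{T})\setminus\{0\}$ solving \eqref{spectral problem-2}, and derive $\varphi\equiv 0$. Writing $p(s)=s^3+s+\lambda$ and introducing the finite Laplace transforms $\widehat{\varphi_j}(s)=\int_0^L\varphi_j(x)e^{-sx}\,dx$, which are entire, I would multiply $\lambda\varphi_j+\varphi_j'+\varphi_j'''=0$ by $e^{-sx}$ and integrate by parts three times over $(0,L)$ to get
\[
p(s)\widehat{\varphi_j}(s)=\varphi_j''(0)+s\varphi_j'(0)+(s^2+1)\varphi_j(0)-\bigl[\varphi_j''(L)+s\varphi_j'(L)+(s^2+1)\varphi_j(L)\bigr]e^{-sL}.
\]
Inserting the boundary conditions of \eqref{spectral problem-2}, namely $\varphi_j'(0)=\varphi_j(L)=0$, $\varphi_1'(L)=0$ and $\varphi_2''(L)=0$, and setting $a=\varphi_1(0)=\varphi_2(0)$, $b_j=\varphi_j''(0)$, $c_1=\varphi_1''(L)$, $c_2=\varphi_2'(L)$, this collapses to
\[
p(s)\widehat{\varphi_1}(s)=b_1+(s^2+1)a-c_1e^{-sL},\qquad p(s)\widehat{\varphi_2}(s)=b_2+(s^2+1)a-c_2\,s\,e^{-sL},
\]
subject to the node relation $\varsigma_{Ne}b_1+\varsigma_{Di}b_2=0$, i.e. $b_2=-(\varsigma_{Ne}/\varsigma_{Di})b_1$.

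Since each $\widehat{\varphi_j}$ is entire, the two numerators must vanish at every root of $p$ — this is precisely the requirement that the relevant quotient of entire functions be entire. Assuming first that $p$ has three distinct roots $r_1,r_2,r_3$ (so $\sum r_k=0$ and $\sum_{k<l}r_kr_l=1$), I obtain $c_1e^{-r_kL}=a(r_k^2+1)+b_1$ and $c_2r_ke^{-r_kL}=a(r_k^2+1)-(\varsigma_{Ne}/\varsigma_{Di})b_1$ for $k=1,2,3$. The decisive step is to eliminate the transcendental factor $e^{-r_kL}$: multiplying the first relation by $c_2r_k$, the second by $c_1$ and subtracting cancels the exponentials and yields $R(r_k)=0$ for $k=1,2,3$, where
\[
R(s)=ac_2\,s^3-ac_1\,s^2+c_2(a+b_1)\,s+c_1\!\left(\tfrac{\varsigma_{Ne}}{\varsigma_{Di}}\,b_1-a\right)
\]
has degree at most three. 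As $R$ vanishes at the three distinct roots of the monic cubic $p$, either $R\equiv0$ or $R=ac_2\,p$ with $ac_2\ne0$. In the latter case matching the $s^2$- and $s$-coefficients forces $c_1=0$ and $b_1=0$, and matching constants gives $\lambda=0$; but then $s=0$ is a root of $p$, at which the first relation reads $c_1=a+b_1=a$, forcing $a=0$ and contradicting $ac_2\ne0$. Hence $R\equiv0$.

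From $R\equiv0$ I read off $ac_1=ac_2=0$, $c_2(a+b_1)=0$ and $c_1(\tfrac{\varsigma_{Ne}}{\varsigma_{Di}}b_1-a)=0$. If $a\ne0$ then $c_1=c_2=0$, so $a(r_k^2+1)+b_1=0$ for all $k$; this makes $r_k^2$ independent of $k$, forcing the three distinct roots to take at most the two values $\pm\sqrt{-1-b_1/a}$, a contradiction. Thus $a=0$, and the relations become $c_1e^{-r_kL}=b_1$ and $c_2r_ke^{-r_kL}=-(\varsigma_{Ne}/\varsigma_{Di})b_1$. If $c_1\ne0$ the first forces $e^{-r_1L}=e^{-r_2L}=e^{-r_3L}$, whence $c_2r_k$ is independent of $k$ and, the $r_k$ being distinct, $c_2=0$, giving $b_1=0$ and then $c_1=0$, a contradiction; if $c_1=0$ then $b_1=0$ and $c_2r_ke^{-r_kL}=0$ yields $c_2=0$ since at most one root is zero. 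In every case $a=b_1=b_2=c_1=c_2=0$, so both numerators vanish identically, $\widehat{\varphi_j}\equiv0$, and therefore $\varphi\equiv0$, the desired contradiction.

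The main remaining point is the degenerate spectrum. The discriminant of $p$ equals $-4-27\lambda^2$, so $p$ has a repeated root only for the two explicit values $\lambda=\pm 2i/(3\sqrt3)$, and never a triple root (that would require $3r=0$ and $3r^2=1$ simultaneously). For those two values one replaces, at the double root $r$, the single vanishing condition by the pair requiring both a numerator and its derivative to vanish at $r$, keeps the simple condition at the remaining simple root $\tilde r=-2r$, and runs the same elimination; I expect this finite extra bookkeeping, rather than any new idea, to be the genuine technical nuisance. Conceptually, the reason the argument closes for \emph{every} $L>0$ — so that the critical set is empty in this configuration — is the asymmetry between the Neumann-controlled edge, which contributes the constant coefficient $c_1$, and the Dirichlet-controlled edge, which contributes $c_2 s$: it is exactly this mismatch that makes the eliminated polynomial $R$ genuinely cubic with a nontrivial $s^2$-term, forcing triviality independently of $L$.
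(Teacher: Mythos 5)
Your generic-spectrum argument is sound and is essentially the paper's own strategy in different clothing: your transform $\widehat{\varphi_j}(s)=\int_0^L\varphi_j e^{-sx}dx$ is the paper's $\hat{\varphi_j}(\xi)$ under $s=i\xi$, your two numerator identities are exactly \eqref{2.2}--\eqref{2.3}, and your elimination of $e^{-r_kL}$ to produce a cubic $R$ that must be proportional to the characteristic polynomial is precisely what the paper does in Claim \ref{claim 3.4} with the polynomial $\tilde P$. Your bookkeeping in the two branches ($R=ac_2p$ versus $R\equiv0$) is correct, and it is a mild improvement in organization that you absorb $\lambda=0$ (where the roots $0,\pm i$ are already distinct) into the generic case rather than treating it separately as the paper does in Claim \ref{claim 2.2}, and that you dispense with the preliminary reduction of Claim \ref{claim 3.1}.

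The genuine gap is the multiple-root case. The lemma asserts non-existence for \emph{every} $\lambda\in\mathbb{C}$, and your discriminant computation correctly isolates the two offending values $\lambda=\pm 2i/(3\sqrt3)$, but you then only \emph{describe} the argument (``require the numerator and its derivative to vanish at the double root\dots I expect this finite extra bookkeeping\dots to be the genuine technical nuisance'') without carrying it out. This is not a formality one can wave at: the paper's Claim \ref{claim 2.3} devotes a full computation to it, writing $\varphi_2=d_0e^{-2\sigma x}+d_1e^{\sigma x}+d_2xe^{\sigma x}$ with $\sigma=\pm i/\sqrt3$, solving for $d_1,d_2$ in terms of $d_0$ from the conditions at $x=L$, and only then extracting the transcendental relation $(-5+3\sigma L)e^{-3\sigma L}=4$, whose impossibility rests on the specific fact that $\sigma L$ is purely imaginary so that $|{-5}\pm i\sqrt3 L|=4$ forces $25+3L^2=16$. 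Whether your version of the elimination (which now involves the extra terms $Le^{-rL}$ coming from differentiating the numerators) closes for every $L>0$ is exactly the content that must be checked, and until it is, the lemma is unproved for those two eigenvalue candidates. Everything else in your proposal I would accept as written.
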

\begin{proof}
	For $\psi\in \mathbb{L}^2(\mathcal{T})$, we introduce the notation $$\hat{\psi_j}(\xi)=\displaystyle\int_0^{l_j}\psi_j(x)e^{-ix\xi}dx.$$ Consider $\lambda\in \mathbb{C}$ and suppose that $\varphi\in \mathbb{H}^3(\mathcal{T})$ satisfies \eqref{spectral problem-2}. We will show that the unique solution for \eqref{spectral problem-2} is the trivial one, that is, $\varphi=0$.
	
	Multiplying the system \eqref{spectral problem-2} by $e^{-ix\xi}$, integrating by parts in $(0,l_j)$ and using the boundary conditions we get, for every $\xi \in \mathbb{C}$,
	\begin{align*}
		\left[(i\xi)^3+i\xi+\lambda\right]&=\left[(1-\xi^2)\varphi_j(0)-i\xi e^{-il_j\xi}\varphi_{j}^{\prime}(l_j)-e^{-il_j\xi}\varphi_{j}^{\prime\prime}(l_j)+\varphi_{j}^{\prime\prime}(0)\right],\ j=1,2.
	\end{align*}
	Writing $\lambda =ip$ with $p \in \mathbb{C}$ and multiplying this equation by $i$ yields
	\begin{align*}
		\left(\xi^3-\xi-p\right)\hat{\varphi_j}(\xi)&=i\left[(1-\xi^2)\varphi_j(0)-i\xi e^{-il_j\xi}\varphi_{j}^{\prime}(l_j)-e^{-il_j\xi}\varphi_{j}^{\prime\prime}(l_j)+\varphi_{j}^{\prime\prime}(0)\right],\ j=1,2.
	\end{align*}
	Setting
	\begin{align*}
		\kappa=\varphi_1(0)=\varphi_j(0),\ \ \ \delta_j=-\varphi_{j}^{\prime}(l_j),\ \ \ \gamma_j=-\varphi_{j}^{\prime\prime}(l_j),\ \ \ 
		\beta_j=\varphi_{j}^{\prime\prime}(0)
	\end{align*}
	one can write
	\begin{align*}
		\left(\xi^3-\xi-p\right)\hat{\varphi_j}(\xi)&=i\left[(1-\xi^2)\kappa+i\delta_j\xi e^{-il_j\xi}+\gamma_j e^{-il_j\xi}+\beta_j\right],\ \ j=1,2.
	\end{align*}
	Since $\delta_1=\gamma_2=0$ and $l_1=l_2=L$ we have
	\begin{align}
		&\left(\xi^3-\xi-p\right)\hat{\varphi}_1(\xi)=i\left[(1-\xi^2)\kappa+\gamma_1 e^{-iL\xi}+\beta_1\right],\ \forall\ \xi \in \mathbb{C},\label{2.2}\\
		&\left(\xi^3-\xi-p\right)\hat{\varphi}_2(\xi)=i\left[(1-\xi^2)\kappa+i\delta_2\xi e^{-iL\xi}+\beta_2\right],\ \ \forall\ \xi \in  \mathbb{C}.\label{2.3}
	\end{align}
Now, defining $f=\varphi_1-\varphi_2$ and $\beta=\beta_1-\beta_2$, from the above identities, we obtain
	\begin{align}\label{2.4}
		\left(\xi^3-\xi-p\right)\hat{f}(\xi)=i\left[(\gamma_1-i\delta_2\xi)e^{-iL\xi}+\beta\right],\ \forall\ \xi \in \mathbb{C}.
	\end{align}
	\begin{claim}\label{claim 3.1}
		If $\gamma_1=0$ or $\beta=0$ then $\varphi=0$.
	\end{claim}
	\begin{proof}
		If $\gamma_1=0$ then $\varphi_1$ solves the problem
		\begin{align*}
			\begin{cases}			\lambda\varphi_1+\varphi^{\prime}_{1}+\varphi^{\prime\prime\prime}_{1}=0,\\
				\varphi_1(L)=\varphi^{\prime}_{1}(L)=\varphi^{\prime\prime}_{1}(L)=0,
			\end{cases}
		\end{align*}
		so $\varphi_1=0$. Consequently,$\varphi_2$ satisfies
		\begin{align*}
			\begin{cases}			\lambda\varphi_2+\varphi^{\prime}_{2}+\varphi^{\prime\prime\prime}_{2}=0,\\
				\varphi_2(0)=\varphi^{\prime}_{2}(0)=\varphi^{\prime\prime}_{2}(0)=0,
			\end{cases}
		\end{align*}
		then $\varphi_2=0$ and therefore $\varphi=0$.
		
		Now, if $\beta=0$ then $\beta_1=\beta_2$ so $\varphi_1$ and $\varphi_2$ are solutions to the problem
		\begin{align*}
			\begin{cases}		
				\lambda \varphi_i+\varphi^{\prime}_{i}+\varphi^{\prime\prime\prime}_{i}=0,\quad i=1,2\\
				\varphi_i(0)=\kappa,\quad \varphi_i^{\prime}(0)=0, \quad 	\varphi_i^{\prime\prime}(0)=\beta_1.
			\end{cases}
		\end{align*}
		By uniqueness of solution, it follows that $\varphi_1=\varphi_2$, which implies $f=0$. Hence $\hat{f}=0$ and since $\beta=0$, \eqref{2.4} becomes
		\begin{align*}
			(\gamma_1-i\delta_2\xi)e^{-iL\xi}=0,\ \forall \xi \in \mathbb{C}.
		\end{align*}
		Evaluating this equality at $\xi=0$, we obtain $\gamma_1=0$, and, as seen before, this leads us to $\varphi=0$, showing Claim \ref{claim 3.1}.
	\end{proof}
	
	Assuming 
	\begin{align}\label{2.5}
		\gamma_1\ne 0\text{ \ and \ }\beta\ne 0,
	\end{align}
	with the following claim in hand, the Lemma \ref{lemma 3.6}  holds for the case $\lambda=0$.
	\begin{claim}\label{claim 2.2}
		If $\lambda=0$ then does not exist $\varphi=(\varphi_1,\varphi_2)\in \mathbb{H}^3(\mathcal{T})\backslash\{0\}$ satisfying \eqref{spectral problem-2}.
	\end{claim}
	\begin{proof}
		We will show that for $\lambda=0$, then we necessarily have $\varphi=0$. To do this, let us multiply \eqref{2.2} and \eqref{2.3} by $-i$ to get
		\begin{align}
			&-i\left(\xi^3-\xi\right)\hat{\varphi}_1(\xi)=(1-\xi^2)\kappa+\gamma_1 e^{-iL\xi}+\beta_1,\ \forall\ \xi \in \mathbb{C}\label{2.6},\\
			&-i\left(\xi^3-\xi\right)\hat{\varphi}_2(\xi)=(1-\xi^2)\kappa+i\delta_2\xi e^{-iL\xi}+\beta_2,\ \ \forall\ \xi \in  \mathbb{C}.\label{2.7}
		\end{align}
		Evaluating \eqref{2.6} at $\xi=1$ and $\xi=-1$ we obtain, respectively,
		\begin{align*}
			\gamma_1e^{-iL}=-\beta_1\text{ \ and \ }\gamma_1e^{iL}=-\beta_1
		\end{align*}
		from where we have $\gamma_1e^{-iL}=\gamma_1e^{iL}$ and, since $\gamma_1\ne 0$, it follows that
		\begin{align}\label{2.8}
			e^{-iL}=e^{iL}.
		\end{align}
		On the other hand, evaluating \eqref{2.7} at $\xi=1$ and $\xi=-1$ we obtain, respectively,
		\begin{align*}
			i\delta_2e^{-iL}=-\beta_2\text{ \ and \ }-i\delta_2e^{iL}=-\beta_2
		\end{align*}
		which gives us
		$$i\delta_2e^{-iL}=-i\delta_2e^{iL}.$$
		If $\delta_2\ne 0$, then the above equality provides
		\begin{align}\label{2.9}
			e^{-iL}=-e^{iL}.
		\end{align}
		Adding \eqref{2.8} and \eqref{2.9} we get $2e^{-iL}=0$, which is not possible. Therefore, $\delta_2=0$ so $\varphi_2$ solves the problem
		\begin{align*}
			\begin{cases}			\lambda\varphi_2+\varphi^{\prime}_{2}+\varphi^{\prime\prime\prime}_{2}=0,\\
				\varphi_2(L)=\varphi^{\prime}_{2}(L)=\varphi^{\prime\prime}_{2}(L)=0,
			\end{cases}
		\end{align*}
		and consequently $\varphi_2=0$. Thanks to this fact and using the boundary conditions of \eqref{spectral problem-2}, $\varphi_1$ is the solution of the following system 
		\begin{align*}
			\begin{cases}			\lambda\varphi_1+\varphi^{\prime}_{1}+\varphi^{\prime\prime\prime}_{1}=0,\\
				\varphi_1(0)=\varphi^{\prime}_{1}(0)=\varphi^{\prime\prime}_{1}(0)=0,
			\end{cases}
		\end{align*}
		which tells us that $\varphi_1=0$ and, consequently $\varphi=0$, which give the Claim \ref{claim 2.2}.
	\end{proof}
	
	Define $P_\lambda(\mu)=\mu^3+\mu+\lambda$. Before studying the case $\lambda\ne 0$, we need to establish an important relationship between the multiplicity of the roots of $P_\lambda$ and the solutions of \eqref{spectral problem-2}.
	\begin{claim}\label{claim 2.3} If $P_\lambda$ admits multiples roots, then $\varphi=0$.
	\end{claim}
	\begin{proof}
		Let $\mu_0, \mu_1$ and $\mu_2$ the roots of $P_\lambda$. By Girard's relations, we have
		\begin{align}
			\mu_0+\mu_1+\mu_2&=0,\label{2.10}\\
			\mu_0\mu_1+\mu_1\mu_2+\mu_0\mu_2&=1,\label{2.11}\\
			\mu_0\mu_1\mu_2&=-\lambda\label{2.12}.
		\end{align}
		If $P_\lambda$ has a triple root thus, by \eqref{2.10}, we have
		$$\mu_0=\mu_1=\mu_2=0.$$
So, from \eqref{2.12} it follows that $\lambda=0$. With this, the Claim \ref{claim 2.2} ensures that $\varphi=0$.
		
		Now, suppose that $P_\lambda$ has a double root, let us say $\mu_1=\mu_2=\sigma$. Relation \eqref{2.10} gives $\mu_0=-2\sigma$ and substituting it in \eqref{2.11} 
		%we have
		%\begin{align*}
		%	-2\sigma\cdot\sigma+\sigma\cdot\sigma-2\sigma\cdot\sigma=1
		%\end{align*}
		%which 
		leads us to
		\begin{align}\label{2.13}
			\sigma=\pm\frac{i}{\sqrt{3}}.
		\end{align}
		By the theory of ordinary differential equations, the set
		\begin{align*}
			\left\{e^{-2\sigma x},e^{\sigma x}, xe^{\sigma x}\right\}
		\end{align*}
		is a generator of the solutions of the following ordinary differential equation
		\begin{align*}
			\lambda y+y^{\prime}+y^{\prime\prime\prime}=0.
		\end{align*}

Note that $\varphi_2$ solution of
		\begin{align*}
			\begin{cases}			\lambda\varphi_2+\varphi^{\prime}_{2}+\varphi^{\prime\prime\prime}_{2}=0,\\
				\varphi_2(L)=\varphi^{\prime}_{2}(0)=\varphi^{\prime\prime}_{2}(L)=0,
			\end{cases}
		\end{align*}
ensures the existence of  $d_0,d_1,d_2 \in \mathbb{C}$ such that
		\begin{align*}
			\varphi_2(x)=d_0e^{-2\sigma x}+d_1e^{\sigma x}+d_2xe^{\sigma x}.
		\end{align*}
		Consequently
		\begin{equation}\label{2.14a}
				\varphi^{\prime}_{2}(x)=d_0(-2\sigma)e^{-2\sigma x}+d_1\sigma e^{\sigma x}+d_2(e^{\sigma x}+x\sigma e^{\sigma x}),
\end{equation}
and
		\begin{equation}\label{2.14b}
				\varphi^{\prime\prime}_{2}(x)=d_0(-2\sigma)^2e^{-2\sigma x}+d_1\sigma^2 e^{\sigma x}+d_2(\sigma e^{\sigma x}+\sigma e^{\sigma x}+x\sigma^2 e^{\sigma x}).
			\end{equation}
Observe that the boundary condition $\varphi_2(L)=0$ gives us
	%	\begin{align*}
	%		d_0e^{-2\sigma L}+d_1e^{\sigma L}+d_2Le^{\sigma L}=0
	%	\end{align*}
	%	which implies
		\begin{align*}
			d_1+Ld_2=-d_0e^{-3\sigma L}.
		\end{align*}
		On the other hand, the condition $\varphi^{\prime\prime}_{2}(L)=0$ provides, due the relation given by \eqref{2.14b}, that
	%	\begin{align*}
	%		4d_0\sigma^2e^{-2\sigma L}+d_1\sigma^2 e^{\sigma L}+2d_2\sigma e^{\sigma L}+d_2L \sigma^2 e^{\sigma L}=0,
	%	\end{align*}
	%	or equivalently,
		\begin{align*}
			d_1\sigma+d_2(2+L\sigma)=-4d_0\sigma e^{-3\sigma L}.
		\end{align*}
		Thus, we have the system
		\begin{align*}
			\left\{
			\begin{array}{l}
				d_1+Ld_2=-d_0e^{-3\sigma L}\\
				d_1\sigma+d_2(2+L\sigma)=-4d_0\sigma e^{-3\sigma L}
			\end{array}
			\right.
		\end{align*}
		Below, we solve this system for $d_1$ and $d_2$ using Gauss-Jordan elimination (we denote by $r_k$ the $k$-th row of a matrix):
		\begin{align*}
			&\left[
			\begin{array}{ccc}
				1&L&-d_0e^{-3\sigma L}\\
				\sigma&2+L\sigma&-4d_0\sigma e^{-3\sigma L}
			\end{array}
			\right]\sim
			%\overset{r_2\leftrightarrow r_2-\sigma r_1}{\tikz \draw[thick, ->] (-0.2, 0) -- (1.7, 0);}
			%\left[
			%\begin{array}{ccc}
			%	1&L&-d_0e^{-3\sigma L}\\
			%	0&2&-3d_0\sigma e^{-3\sigma L}
			%\end{array}
			%\right]
		%	\overset{r_2\leftrightarrow \frac{1}{2}r_2}{\tikz \draw[thick, ->] (0, 0) -- (1.5, 0);}\\
		%	~~\\
		%	&\left[
		%	\begin{array}{ccc}
		%		1&L&-d_0e^{-3\sigma L}\\
		%		0&1&-\frac{3d_0\sigma e^{-3\sigma L}}{2}
		%	\end{array}
		%	\right]
		%	\overset{r_1\leftrightarrow r_1-Lr_2}{\tikz \draw[thick, ->] (-0.2, 0) -- (1.7, 0);}
			\left[
			\begin{array}{ccc}
				1&0&-d_0e^{-3\sigma L}+\frac{3d_0\sigma Le^{-3\sigma L}}{2}\\
				0&1&-\frac{3d_0\sigma e^{-3\sigma L}}{2}
			\end{array}
			\right].
		\end{align*}
		Therefore 
		\begin{align}\label{2.15}
			d_1=-d_0e^{-3\sigma L}+\frac{3d_0\sigma Le^{-3\sigma L}}{2}\text{ \ \ and \ \ }d_2=-\frac{3d_0\sigma e^{-3\sigma L}}{2}.
		\end{align}
		
		Using this values in \eqref{2.14a} yields,
		\begin{align*}
			\varphi_{2x}(x)&=d_0(-2\sigma)e^{-2\sigma x}+\left(-d_0e^{-3\sigma L}+\frac{3d_0\sigma Le^{-3\sigma L}}{2}\right)\sigma e^{\sigma x}+\left(-\frac{3d_0\sigma e^{-3\sigma L}}{2}\right)(e^{\sigma x}+x\sigma e^{\sigma x}).
		\end{align*}
		Since $\varphi_{2x}(0)=0$ we get that
		\begin{align*}
			-2d_0\sigma-d_0\sigma e^{-3\sigma L}+\frac{3d_0\sigma^2 Le^{-3\sigma L}}{2}-\frac{3d_0\sigma e^{-3\sigma L}}{2}=0.
		\end{align*}
		Multiplying the previous equality by $2/\sigma$, we have
		\begin{align*}
			d_0\left(-4-2e^{-3\sigma L}+3\sigma Le^{-3\sigma L}-3 e^{-3\sigma L}\right)=0.
		\end{align*}
		If $d_0\ne 0$, then it follows that
		\begin{align*}
			-4-2e^{-3\sigma L}+3\sigma Le^{-3\sigma L}-3 e^{-3\sigma L}=0 \iff \left(-5+3\sigma L\right)e^{-3\sigma L}=4.
		\end{align*}
		Due to \eqref{2.13} this equality becomes
		\begin{align*}
			\left(-5\pm \frac{3iL}{\sqrt{3}}\right)e^{\pm \frac{3iL}{\sqrt{3}}}=4.
		\end{align*}
		Taking the absolute value in the previous equality, we see that
		\begin{align*}
			\left|-5\pm \frac{3iL}{\sqrt{3}}\right|=4 \implies 25+3L^2=16 \implies 3L^2<0,
		\end{align*}
a contradiction. Therefore $d_0=0$ and from \eqref{2.15} it follows that $d_1=d_2=0$, which results in $\varphi_2=0$. Hence, using the boundary conditions of \eqref{spectral problem-2}, $\varphi_1$ solves the problem
		\begin{align*}
			\begin{cases}			\lambda\varphi_1+\varphi^{\prime}_{1}+\varphi^{\prime\prime\prime}_{1}=0,\\
				\varphi_1(0)=\varphi^{\prime}_{1}(0)=\varphi^{\prime\prime}_{1}(0)=0,
			\end{cases}
		\end{align*}
		that is, $\varphi_1=0$ and therefore $\varphi=0$, giving the Claim \ref{claim 2.3}.
	\end{proof}
	
	According to claims \ref{claim 2.2} and \ref{claim 2.3}, it remains to study the case where $\lambda\in \mathbb{C}\backslash\{0\}$ and the roots of $P_\lambda$ are all simple. In this case, defining $P(\xi)=\xi^3-\xi-p$, due to identity
	\begin{align*}
		P_\lambda(i\xi)=-iP(\xi),\ \forall \xi \in \mathbb{C},
	\end{align*}
	we have that the roots of $P$ are all simple, too. Moreover, as $\lambda=ip$ we must have
$$
		p \in \mathbb{C}\backslash\{0\}.
$$
	With this in mind, the next claim completes the proof of Lemma \ref{lemma 3.6}.
	\begin{claim}\label{claim 3.4}
		If $\lambda \in \mathbb{C}\backslash\{0\}$ and the roots of $P_\lambda$ are all simple then $\delta_2=0$.
	\end{claim}
	\begin{proof}
		Suppose, by contradiction, that $\delta_2\ne 0$. Let $\xi_0,\xi_1,\xi_2$ the roots of $P$. As $p\ne 0$ we have $\xi_s\ne 0$ for $s=0,1,2$. The relation \eqref{2.2} ensures that
		\begin{align*}
			(1-\xi_s^2)\kappa+\gamma_1 e^{-iL\xi_s}+\beta_1=0.
		\end{align*}
		Multiplying this equation by $1/\gamma_1$ yields
		\begin{align}\label{2.17}
			(1-\xi_s^2)\kappa\frac{1}{\gamma_1}+e^{-iL\xi_s}+\frac{\beta_1}{\gamma_1}=0,\ s=0,1,2.
		\end{align}
		Now, from \eqref{2.3}, note that
		\begin{align*}
			(1-\xi_s^2)\kappa+i\delta_2\xi_s e^{-iL\xi_s}+\beta_2=0.
		\end{align*}
		Multiplying this equation by $1/i\delta_2\xi_s$, we obtain
		\begin{align}\label{2.18}
			(1-\xi_s^2)\kappa\frac{1}{i\delta_2\xi_s}+e^{-iL\xi_s}+\frac{\beta_2}{i\delta_2\xi_s}=0,\ s=0,1,2.
		\end{align}
		Taking the difference between \eqref{2.18} and \eqref{2.17} and multiplying the result by  $\gamma_1 i\delta_2\xi_s$  yields that
	%	\begin{align*}
	%		(1-\xi_s^2)\kappa\left(\frac{1}{\gamma_1}-\frac{1}{i\delta_2\xi_s}\right)+\frac{\beta_1}{\gamma_1}-\frac{\beta_2}{i\delta_2\xi_s}=0.
	%	\end{align*}
	%	Now, we multiply this equation by $\gamma_1 i\delta_2\xi_s$ to get
	%	\begin{align*}
	%		(1-\xi_s^2)\kappa\left(i\delta_2\xi_s-\gamma_1\right)+\beta_1i\delta_2\xi_s-\beta_2\gamma_1=0,
	%	\end{align*}
	%	that is,
	%	\begin{align*}
	%		\kappa i\delta_2\xi_s-\kappa i\delta_2\xi_s^3-\kappa \gamma_1+\kappa\gamma_1\xi_s^2+\beta_1 i\delta_2\xi_s-\beta_2\gamma_1=0.
	%	\end{align*}
	%	Reorganizing the terms, we can write
		\begin{align*}
			-\kappa i\delta_2\xi_s^3+\kappa\gamma_1\xi_s^2+(\kappa i\delta_2+\beta_1 i\delta_2)\xi_s-(\kappa\gamma_1+\beta_2\gamma_1)=0,\ s=0,1,2.
		\end{align*}
		Since $\xi_0,\xi_1,\xi_2$ are all distinct, we conclude that $\xi_s$ is a simple root of the polynomial $\tilde{P}$ defined by
		\begin{align*}
			\tilde{P}(\xi)=-\kappa i\delta_2\xi^3+\kappa\gamma_1\xi^2+(\kappa i\delta_2+\beta_1i\delta_2)\xi-(\kappa\gamma_1+\beta_2\gamma_1),
		\end{align*}
		 for $s=0,1,2$. Thus, there exist $c \in \mathbb{C}\backslash\{0\}$ such that
		\begin{align*}
			P(\xi)=c\tilde{P}(\xi),\ \forall\xi \in\mathbb{C}.
		\end{align*}
		Hence, the corresponding coefficients of $P$ and $c\tilde{P}$ must be equal. In particular,
		\begin{align*}
			-c\kappa i\delta_2=1\text{ \ \ and \ \ }c\kappa\gamma_1=0.
		\end{align*}
The first equality tells us that $\kappa\ne 0$, since in our hypothesis $\delta_2\neq0$. Thus, using this fact in the second equality, we obtain $\gamma_1=0$, which is a contradiction with \eqref{2.5}; consequently, Claim \ref{claim 3.4} is complete.
	\end{proof}
	So, the previous claims ensure the Lemma \ref{lemma 3.6}.
\end{proof}
\subsubsection{\textbf{Case \texorpdfstring{$N\geq3$ and $m=1$}{N>=3 and m=1}}} In this case the problem \eqref{spectral problem} becomes
\begin{align}\label{spectral problem-3}
	\begin{cases}
		\lambda\varphi_j+\varphi^{\prime}_{j}+\varphi^{\prime\prime\prime}_{j}=0,&x \in (0,l_j),\ j=1,...,N,\\
		\varphi_j(0)=\varphi_1(0),&j=1,...,N,\\
		\displaystyle\sum_{j=1}^N\varphi^{\prime\prime}_{j}(0)=0,&\\		
		\varphi_j(l_j)=\varphi^{\prime}_{j}(0)=0,&j=1,...,N,\\
		\varphi^{\prime}_{1}(l_1)=0,\ \ \varphi_{j}^{\prime\prime}(l_j)=0,&j=2,...,N,
	\end{cases}
\end{align}
and we have the following result.
\begin{lemma}\label{lemma 3.6-b}
	Let $L>0$ and assume $l_j=L$, for any $j=1,...,N$. There exist $\lambda\in \mathbb{C}$ and $\varphi\in \mathbb{H}^3(\mathcal{T})\backslash\{0\}$ satisfying \eqref{spectral problem-3} if and only if $L \in \mathcal{N}^*$.
\end{lemma}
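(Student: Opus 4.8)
The plan is to run the Fourier-transform machinery of Lemma~\ref{lemma 3.6}, now exploiting that there are $N-1\ge 2$ Dirichlet edges. Writing $\lambda=ip$ and $\hat{\varphi_j}(\xi)=\int_0^L\varphi_j(x)e^{-ix\xi}dx$, and setting $\kappa=\varphi_j(0)$, $\delta_j=-\varphi_j'(l_j)$, $\gamma_j=-\varphi_j''(l_j)$, $\beta_j=\varphi_j''(0)$ as in the previous case, an integration by parts against the boundary conditions of \eqref{spectral problem-3} gives, for the Dirichlet edges ($\gamma_j=0$, $j\ge 2$), $(\xi^3-\xi-p)\hat{\varphi_j}(\xi)=i[(1-\xi^2)\kappa+i\delta_j\xi e^{-iL\xi}+\beta_j]$, and for the Neumann edge ($\delta_1=0$), $(\xi^3-\xi-p)\hat{\varphi_1}(\xi)=i[(1-\xi^2)\kappa+\gamma_1 e^{-iL\xi}+\beta_1]$. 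Since each $\hat{\varphi_j}$ is entire, the numerators must vanish at the three roots $\xi_0,\xi_1,\xi_2$ of $P(\xi)=\xi^3-\xi-p$, which is the source of all relations. First I would dispose of the degenerate cases exactly as in Claims~\ref{claim 2.2}--\ref{claim 2.3}: if $P$ has a multiple root, then each Dirichlet edge already satisfies $\varphi_j(L)=\varphi_j'(0)=\varphi_j''(L)=0$, so the verbatim computation of Claim~\ref{claim 2.3} (with the contradiction $25+3L^2=16$) forces $\varphi_j=0$ for every $j\ge2$; the junction condition then yields $\beta_1=0$ and $\kappa=0$, so $\varphi_1$ has trivial Cauchy data at $0$ and $\varphi\equiv0$. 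The triple-root case falls under $\lambda=0$, which I would treat with the basis $\{1,\cos x,\sin x\}$: a Dirichlet edge allows at most $\varphi_j=\kappa\cos x$ when $\cos L=0$, but then the Neumann condition $\varphi_1'(L)=-\kappa\sin L=0$ forces $\kappa=0$. Hence I may assume $\lambda\neq0$ and that $\xi_0,\xi_1,\xi_2$ are simple and nonzero.

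The heart of the \emph{only if} direction is a comparison of two Dirichlet edges. Subtracting the relations for $j,k\ge2$ eliminates $\kappa$ and the constant term and gives $i\xi_s e^{-iL\xi_s}(\delta_j-\delta_k)=\beta_k-\beta_j$ for $s=0,1,2$. If $\delta_j\neq\delta_k$ for some pair (which is necessary for a nontrivial solution, see below), this forces $\xi_0e^{-iL\xi_0}=\xi_1e^{-iL\xi_1}=\xi_2e^{-iL\xi_2}$. Setting $a=-iL\xi_0$, $b=-iL\xi_1$, so that $-iL\xi_2=-(a+b)$ by $\xi_0+\xi_1+\xi_2=0$, this reads $ae^a=be^b=-(a+b)e^{-(a+b)}$; meanwhile Vieta's relation $\sum_{s<t}\xi_s\xi_t=-1$ translates, under the same substitution, into $L^2=-(a^2+ab+b^2)$, so $L\in\mathcal{N}^*$. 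If instead $\delta_j=\delta_k$ for every pair, the same identity gives $\beta_j=\beta_k$, so all Dirichlet edges carry identical data and, being solutions of the same ODE with the same Cauchy datum $(\kappa,0,\beta^D)$ at $0$, coincide; the whole system then collapses to the two-edge problem \eqref{spectral problem-2} with weights $\varsigma_{Ne}=1$, $\varsigma_{Di}=N-1$ (the junction becomes $\beta_1+(N-1)\beta^D=0$), and Lemma~\ref{lemma 3.6} yields $\varphi\equiv0$. Thus any nontrivial solution forces $L\in\mathcal{N}^*$.

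For the \emph{if} direction, given $L\in\mathcal{N}^*$ I would pick witnesses $a,b$ and set $\xi_0=ia/L$, $\xi_1=ib/L$, $\xi_2=-(\xi_0+\xi_1)$; with $c=-(a+b)$ the algebraic identity $ab+bc+ca=-(a^2+ab+b^2)=L^2$ shows $\sum\xi_s=0$ and $\sum_{s<t}\xi_s\xi_t=-1$, so the $\xi_s$ are the roots of $\xi^3-\xi-p$ with $p=\prod\xi_s\neq0$, and $ae^a=be^b=ce^c$ gives $\xi_0e^{-iL\xi_0}=\xi_1e^{-iL\xi_1}=\xi_2e^{-iL\xi_2}=:C\neq0$. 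I then construct a nonzero solution by choosing $\kappa=\gamma_1=\beta_1=0$, $\delta_2=1$, $\delta_3=-1$, $\delta_j=0$ for $j\ge4$, and $\beta_j=-i\delta_j C$: each Dirichlet relation reduces to $i\delta_j C+\beta_j=0$, the Neumann relations hold trivially, and the junction becomes $-iC\sum_{j\ge2}\delta_j=0$, which is satisfied because $\delta_2+\delta_3=0$. The function $\hat{\varphi_2}(\xi)=-[\xi e^{-iL\xi}-C]/(\xi^3-\xi-p)$ is then entire of exponential type $\le L$ and square-integrable, hence by Paley--Wiener is the transform of a genuine $\varphi_2\in H^3(0,L)\setminus\{0\}$, and the reconstructed $(\varphi_1,\dots,\varphi_N)$ solves \eqref{spectral problem-3} nontrivially. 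This uses $N\ge3$ precisely so that two distinct Dirichlet edges are available.

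The main obstacle, and where I would concentrate the care, is the bookkeeping that makes the reconstruction rigorous: verifying that entireness together with the prescribed left-endpoint data $(\kappa,0,\beta_j)$ genuinely recovers a solution meeting all the prescribed right-endpoint conditions (the Paley--Wiener/exponential-type step), and confirming that the substitution $a=-iL\xi_0$, $b=-iL\xi_1$ is a faithful correspondence between root configurations of $P$ and witness pairs of $\mathcal{N}^*$, in particular that any witness producing a multiple root of $P$ is already covered by the degenerate case. The translation of $\sum_{s<t}\xi_s\xi_t=-1$ into $L^2=-(a^2+ab+b^2)$ must be recorded exactly, since it is what couples Vieta's relations to the second defining condition of $\mathcal{N}^*$.
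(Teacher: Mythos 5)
Your overall strategy is sound, but it takes a substantially longer road than the paper and leaves its constructive half unfinished. For the \emph{only if} direction you and the paper share the same structural pivot: if all the Dirichlet components carry identical boundary data then the system collapses to the two--edge problem \eqref{spectral problem-2} with $\varsigma_{Ne}=1$, $\varsigma_{Di}=N-1$, and Lemma \ref{lemma 3.6} kills the solution. Where you diverge is in the non-degenerate case: you re-derive the characterization of $\mathcal{N}^*$ from scratch by pairing two Dirichlet edges, extracting $\xi_0e^{-iL\xi_0}=\xi_1e^{-iL\xi_1}=\xi_2e^{-iL\xi_2}$ from the vanishing of the numerators at the roots of $P$, and translating via Vieta into the defining relations of \eqref{critical-2}. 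The paper instead symmetrizes: it sets $\psi=\sum_{j=2}^N\varphi_j$ and $\psi_j=\psi-(N-1)\varphi_j$, observes that each $\psi_j$ satisfies exactly the single-edge boundary conditions $\psi_j(0)=\psi_j(L)=\psi_j'(0)=\psi_j''(L)=0$ studied by Glass and Guerrero, and cites \cite[Propositions 1 and 2]{GG1} to conclude $\psi_j=0$ when $L\notin\mathcal{N}^*$. Your computation is a legitimate (and essentially correct) re-proof of that cited result, including the $\lambda=0$ and multiple-root eliminations, at the cost of redoing work the literature already supplies.

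The genuine gap is in your \emph{if} direction. The paper's construction is immediate: for $L\in\mathcal{N}^*$ take the nontrivial single-edge eigenfunction $z$ guaranteed by \cite{GG1} with $z(0)=z(L)=z'(0)=z''(L)=0$ and set $\varphi=(0,z,-z,0,\dots,0)$; all vertex and edge conditions are checked by inspection. You instead propose to manufacture $\varphi_2$ by inverting the entire function $-[\xi e^{-iL\xi}-C]/(\xi^3-\xi-p)$ via Paley--Wiener, and you explicitly defer the two points on which this hinges: (i) that the reconstructed function is a genuine $H^3(0,L)$ solution of the ODE attaining precisely the prescribed traces $(\kappa,0,\beta_j)$ at $0$ and $(0,-\delta_j,0)$ at $L$ (the inverse implication of the boundary-term identity is not automatic and must be argued, as in \cite{Rosier}); and (ii) that every $L\in\mathcal{N}^*$ admits a witness $(a,b)$ for which the associated cubic has simple nonzero roots, without which your three-exponential ansatz is unavailable. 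Point (ii) does hold (a double root forces $a$ off the imaginary axis and hence $L^2<0$), but as written your proof does not establish it, so the existence half is incomplete. Replacing the Paley--Wiener construction by the paper's explicit $(0,z,-z,0,\dots,0)$ closes the gap with no further work.
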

\begin{proof}
If $L \in \mathcal{N}^*$ then, from \cite[Propositions 1 and 2]{GG1}, there exist $\lambda \in\mathbb{C}$ and $z \in H^3(0,L)\backslash\{0\}$ such that
	\begin{align*}
		\begin{cases}
			\lambda z+z^{\prime}+z^{\prime\prime\prime}=0,\\
			z(0)=z(L)=z^{\prime}(0)=z^{\prime\prime}(L)=0.
		\end{cases}
	\end{align*}
	Defining $\varphi_1=0,\ \varphi_2=z,\ \varphi_3=-z,\ \varphi_j=0\text{ for }j=4,...,N\text{ and }\varphi=(\varphi_1,\varphi_2,...,\varphi_N)$ then $\varphi \in \mathbb{H}^3(\mathcal{T})\backslash\{0\}$ and $(\lambda,\varphi)$ satisfies \eqref{spectral problem-3}.\\

Now, assuming that there exist $\lambda \in \mathbb{C}$ and $\varphi \in \mathbb{H}^3(\mathcal{T})\backslash\{0\}$ satisfying \eqref{spectral problem-3}, we will show by contradiction that this leads to $L \in \mathcal{N}^*$. Suppose that this does not occur, that is, $L \notin \mathcal{N}^*$. Define the following functions
	\begin{align*}
		\psi=\sum_{j=2}^N\varphi_j\text{ \ \ and \ \ }\psi_j=\psi-(N-1)\varphi_j,\ j=2,...,N.
	\end{align*}
	Note that, for each $j \in \{2,...,N\}$, $\psi_j$ solves the problem
	\begin{align}
    \label{eq: critical dir}
		\begin{cases}
			\lambda \psi_j+\psi^{\prime}_{j}+\psi^{\prime\prime}_{j}=0,\\
			\psi_j(0)=\psi_j(L)=\psi^{\prime}_{j}(0)=\psi^{\prime\prime}_{j}(L)=0.
		\end{cases}
	\end{align}
	Since $L\notin \mathcal{N}^*$, from \cite[Propositions 1 and 2]{GG1} it follows that
	\begin{align*}
		\psi_j=0,\text{for}\ j=2,...,N, \implies \varphi_j=\frac{1}{N-1}\psi,\text{for}\  j=2,...,N.
	\end{align*}
%	so that
%	\begin{align*}
%		\varphi_j=\frac{1}{N-1}\psi,\text{for}\  j=2,...,N.
%	\end{align*}
	Consequently,
	$$%\label{3.28}
		\varphi_j=\varphi_2,\ \text{for}\ j=2,...,N.
$$
	This implies that, $\varphi_{1}$ and $\varphi_{N}$ satisfy the spectral problem
    \begin{align*}
	\begin{cases}
		\lambda\varphi_j+\varphi^{\prime}_{j}+\varphi^{\prime\prime\prime}_{j}=0,&x \in (0,l_j),\ j=1,N,\\
		\varphi_N(0)=\varphi_1(0),&\\
		\varphi^{\prime\prime}_{1}(0)+(N-1)\varphi^{\prime\prime}_{N}(0)=0,&\\		
		\varphi_j(L)=\varphi^{\prime}_{j}(0)=0,&j=1,N,\\
		\varphi^{\prime}_{1}(L)=\varphi^{\prime\prime}_{N}(L)=0,&
	\end{cases}
\end{align*}
which corresponds to \eqref{spectral problem-2} in the case $\varsigma_{Ne}=1$ and $\varsigma_{Di}=N-1$. Therefore, the unique solution is $\varphi_{1}=\varphi_{N}=0$. Finally, as $0=\varphi_{N}=\varphi_{j}$ for $j=2,\dots,N$, we obtain $\varphi=0$.\end{proof}
\subsubsection{\textbf{Case \texorpdfstring{$N\geq3$ and $m=N-1$}{N>=3 and m=N-1}}}
In this case, the problem \eqref{spectral problem} becomes
\begin{align}\label{spectral problem-4}
\begin{cases}
\lambda\varphi_j+\varphi^{\prime}_{j}+\varphi^{\prime\prime\prime}_{j}=0,&x \in (0,l_j),\ j=1,...,N,\\
\varphi_j(0)=\varphi_1(0),&j=1,...,N,\\
\displaystyle\sum_{j=1}^N\varphi^{\prime\prime}_{j}(0)=0,&\\		
\varphi_j(l_j)=\varphi^{\prime}_{j}(0)=0,&j=1,...,N,\\
\varphi_{j}^{\prime}(l_j)=0,&j=1,...,N-1\\
\varphi^{\prime\prime}_{N}(l_N)=0,&
\end{cases}
\end{align}
and we have the following result.
\begin{lemma}\label{lemma 3.7}
	Let $L>0$ and assume $l_j=L$, for $j=1,...,N$. There exist $\lambda\in \mathbb{C}$ and $\varphi\in \mathbb{H}^3(\mathcal{T})\backslash\{0\}$ satisfying \eqref{spectral problem-4} if and only if $L \in \mathcal{N}$.
\end{lemma}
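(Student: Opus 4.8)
The plan is to mirror the two-step argument of Lemma~\ref{lemma 3.6-b}, now exploiting the symmetry among the $N-1$ Neumann edges $j=1,\dots,N-1$ and reducing the leftover coupled system to the two-edge problem \eqref{spectral problem-2}, which is already resolved by Lemma~\ref{lemma 3.6}. The one external ingredient I rely on is the characterization of Rosier's critical set: by \cite{Rosier}, $L\in\mathcal{N}$ (with $\mathcal{N}$ given by \eqref{critical}) if and only if there exist $\lambda\in\mathbb{C}$ and $z\in H^3(0,L)\backslash\{0\}$ solving
\begin{equation}\label{rosier-bvp}
\begin{cases}
\lambda z+z'+z'''=0,\\
z(0)=z(L)=z'(0)=z'(L)=0.
\end{cases}
\end{equation}
This is precisely the homogeneous boundary-value problem carried by each Neumann edge of \eqref{spectral problem-4} once the common nodal value $\varphi_1(0)$ is subtracted.

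For the sufficiency direction, suppose $L\in\mathcal{N}$ and take $(\lambda,z)$ as in \eqref{rosier-bvp}. Since $N\geq 3$ there are at least two Neumann edges, so I would set $\varphi_1=z$, $\varphi_2=-z$, and $\varphi_j=0$ for $j=3,\dots,N$. A direct verification shows $\varphi=(\varphi_1,\dots,\varphi_N)\in\mathbb{H}^3(\mathcal{T})\backslash\{0\}$ solves \eqref{spectral problem-4}: the equation holds edgewise; the continuity relations reduce to $\varphi_j(0)=z(0)=0$ and the node condition to $\sum_{j=1}^N\varphi_j''(0)=z''(0)-z''(0)=0$; while the terminal conditions $\varphi_j(L)=\varphi_j'(0)=0$, together with $\varphi_j'(L)=0$ for $j\leq N-1$ and $\varphi_N''(L)=0$, are inherited from \eqref{rosier-bvp} on edges $1,2$ and are trivial on the remaining edges.

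For the necessity direction I would argue by contraposition: assume $L\notin\mathcal{N}$ and that $(\lambda,\varphi)$ solves \eqref{spectral problem-4}, and deduce $\varphi=0$. Setting $\kappa=\varphi_1(0)$, each edge $j\in\{1,\dots,N-1\}$ satisfies $\varphi_j(0)=\kappa$, $\varphi_j'(0)=0$, $\varphi_j(L)=0$, $\varphi_j'(L)=0$, so every difference $\varphi_i-\varphi_j$ with $1\leq i,j\leq N-1$ solves \eqref{rosier-bvp} with homogeneous data. Since $L\notin\mathcal{N}$ forces the trivial solution, I obtain $\varphi_1=\cdots=\varphi_{N-1}=:w$. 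The node condition then reads $(N-1)w''(0)+\varphi_N''(0)=0$, and the pair $(w,\varphi_N)$ satisfies exactly the two-edge problem \eqref{spectral problem-2} with $\varsigma_{Ne}=N-1>0$ on the Neumann slot carrying $w$ and $\varsigma_{Di}=1>0$ on the Dirichlet slot carrying $\varphi_N$. Lemma~\ref{lemma 3.6} then yields $w=\varphi_N=0$, hence $\varphi=0$, contradicting $\varphi\neq 0$.

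The bookkeeping---checking that the differences land in \eqref{rosier-bvp} and that $(w,\varphi_N)$ matches \eqref{spectral problem-2}---is routine. The only genuine input is the identification of \eqref{rosier-bvp} as the boundary-value problem characterizing $\mathcal{N}$, which I take from Rosier \cite{Rosier}. I expect the main (mild) difficulty to be ensuring the reduction stays airtight when $\kappa\neq 0$: one must confirm that $w$ and $\varphi_N$ jointly verify all constraints of \eqref{spectral problem-2}, with no hidden condition on $\kappa$ dropped, so that Lemma~\ref{lemma 3.6}---which already covers every $\lambda$ and all root configurations of $P_\lambda$---applies verbatim.
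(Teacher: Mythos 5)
Your proposal is correct and follows essentially the same route as the paper: the same explicit construction $\varphi_1=z$, $\varphi_2=-z$, $\varphi_j=0$ for sufficiency, and for necessity the same reduction---showing $\varphi_1=\cdots=\varphi_{N-1}$ via solutions of Rosier's homogeneous boundary-value problem (you use pairwise differences $\varphi_i-\varphi_j$ where the paper uses the equivalent combinations $\psi-(N-1)\varphi_j$) and then invoking Lemma~\ref{lemma 3.6} with $\varsigma_{Ne}=N-1$, $\varsigma_{Di}=1$. The reduction to \eqref{spectral problem-2} is airtight as you describe it, including the case $\kappa\neq 0$.
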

\begin{proof} If $L \in \mathcal{N}$ the result follows direct by \cite[Lemma 3.5]{Rosier} there exist $\lambda \in\mathbb{C}$ and $z \in H^3(0,L)\backslash\{0\}$ such that
	\begin{align*}
		\begin{cases}
			\lambda z+z^{\prime}+z^{\prime\prime\prime}=0,\\
			z(0)=z(L)=z^{\prime}(0)=z^{\prime}(L)=0.
		\end{cases}
	\end{align*}
	Defining $\varphi_1=z,\ \varphi_2=-z,\ \varphi_j=0\text{ for }j=3,...,N\text{ and }\varphi=(\varphi_1,\varphi_2,...,\varphi_N)$ then $\varphi \in \mathbb{H}^3(\mathcal{T})\backslash\{0\}$ with $(\lambda,\varphi)$ satisfying \eqref{spectral problem-4}.
	
	Conversely, assume that there exist $\lambda \in \mathbb{C}$ and $\varphi \in \mathbb{H}^3(\mathcal{T})\backslash\{0\}$ satisfying \eqref{spectral problem-4}. We will show that $L \in \mathcal{N}$. Suppose, by contradiction, that $L \notin \mathcal{N}$ and define
	\begin{align*}
		\psi=\sum_{j=1}^{N-1}\varphi_j\text{ \ \ and \ \ }\psi_j=\psi-(N-1)\varphi_j,\ j=1,...,N-1.
	\end{align*}
	For each $j \in \{1,...,N-1\}$, $\psi_j$ solves the problem
	\begin{align*}
		\begin{cases}
			\lambda \psi_j+\psi^{\prime}_{j}+\psi^{\prime\prime\prime}_{j}=0,\\
						\psi_j(0)=\psi_j(L)=\psi^{\prime}_{j}(0)=\psi^{\prime}_{j}(L)=0.
		\end{cases}
	\end{align*}
	Since $L\notin \mathcal{N}$, from \cite[Lemma 3.5]{Rosier} it follows that
	\begin{align*}
		\psi_j=0,\ j=1,...,N-1 \implies \varphi_j=\frac{1}{N-1}\psi,\ j=1,...,N-1.
	\end{align*}
%	so that
%	\begin{align*}
%		\varphi_j=\frac{1}{N-1}\psi,\ j=1,...,N-1.
%	\end{align*}
	Consequently,
	$$%\label{3.39}
		\varphi_j=\varphi_1,\ j=1,...,N-1.
$$
	This implies that, $\varphi_{1}$ and $\varphi_{N}$ satisfy the spectral problem
    \begin{align*}
	\begin{cases}
		\lambda\varphi_j+\varphi^{\prime}_{j}+\varphi^{\prime\prime\prime}_{j}=0,&x \in (0,l_j),\ j=1,N,\\
		\varphi_N(0)=\varphi_1(0),&\\
		(N-1)\varphi^{\prime\prime}_{1}(0)+1\varphi^{\prime\prime}_{N}(0)=0,&\\		
		\varphi_j(L)=\varphi^{\prime}_{j}(0)=0,&j=1,N,\\
		\varphi^{\prime}_{1}(L)=\varphi^{\prime\prime}_{N}(L)=0,&
	\end{cases}
\end{align*}
which corresponds to \eqref{spectral problem-2} in the case $\varsigma_{Ne}=N-1$ and $\varsigma_{Di}=1$. Therefore, the unique solution is $\varphi_{1}=\varphi_{N}=0$. Finally, as $0=\varphi_{1}=\varphi_{j}$ for $j=1,\dots,N-1$, we obtain $\varphi=0$.\end{proof}

\subsubsection{\textbf{Case \texorpdfstring{$N>3$ and $1<m<N-1$}{N>3 and 1<m<N-1}}} We recall the problem \eqref{spectral problem} given by
\begin{align}\label{spectral problem-5}
\begin{cases}			
\lambda\varphi_j+\varphi^{\prime}_{j}+\varphi^{\prime\prime}_{j}=0,&x \in (0,l_j),\ j=1,...,N,\\
\varphi_j(0)=\varphi_1(0),&j=1,...,N,\\
\displaystyle\sum_{j=1}^N\varphi^{\prime\prime}_{j}(0)=0,&\\		
\varphi_j(l_j)=\varphi^{\prime\prime}_{j}(0)=0,&j=1,...,N,\\
\varphi^{\prime}_{j}(l_j)=0,&j=1,...,m,\\
\varphi^{\prime\prime}_{j}(l_j)=0,&j=m+1,...,N.
\end{cases}
\end{align}
For this case, we have the following result.
\begin{lemma}\label{lemma 3.8}
	Let $L>0$ and assume $l_j=L$ for $j=1,...,N$. There exist $\lambda\in \mathbb{C}$ and $\varphi\in \mathbb{H}^3(\mathcal{T})\backslash\{0\}$ satisfying \eqref{spectral problem-5} if and only if $L \in \mathcal{N}\cup \mathcal{N}^*$.
\end{lemma}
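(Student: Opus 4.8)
The plan is to mirror the proofs of Lemmas~\ref{lemma 3.6-b} and~\ref{lemma 3.7}, now handling the two families of edges simultaneously and closing the argument with Lemma~\ref{lemma 3.6} in its general form \eqref{spectral problem-2}. The crucial structural fact is that $1<m<N-1$ forces $m\geq 2$ and $N-m\geq 2$, so there are at least two Neumann edges $j=1,\dots,m$ and at least two Dirichlet edges $j=m+1,\dots,N$. For the sufficiency direction (if $L\in\mathcal{N}\cup\mathcal{N}^*$, then a nontrivial solution exists) I would argue constructively and split into two cases. If $L\in\mathcal{N}$, then by \cite[Lemma 3.5]{Rosier} there are $\lambda\in\mathbb{C}$ and $z\in H^3(0,L)\setminus\{0\}$ with $\lambda z+z'+z'''=0$ and $z(0)=z(L)=z'(0)=z'(L)=0$; setting $\varphi_1=z$, $\varphi_2=-z$ on two Neumann edges and $\varphi_j=0$ otherwise gives a nonzero element of $\mathbb{H}^3(\mathcal{T})$ solving \eqref{spectral problem-5}. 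If $L\in\mathcal{N}^*$, then by \cite[Propositions 1 and 2]{GG1} there are $\lambda$ and $z\neq 0$ with the same ODE and $z(0)=z(L)=z'(0)=z''(L)=0$; placing $\varphi_{m+1}=z$, $\varphi_{m+2}=-z$ on two Dirichlet edges and zero elsewhere works. In both cases the vertex conditions hold because $z(0)=0$ and the two nonzero second derivatives at the node cancel, while the unused right-endpoint condition is met trivially since those components vanish.

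For the necessity direction I would prove the contrapositive: assuming $L\notin\mathcal{N}\cup\mathcal{N}^*$, I show that the only solution of \eqref{spectral problem-5} is $\varphi=0$. The key observation is that for any two edges $i,j$ of the same type the difference $f:=\varphi_i-\varphi_j$ solves the KdV ODE and, by the vertex conditions $\varphi_i(0)=\varphi_j(0)$ and $\varphi_i'(0)=\varphi_j'(0)=0$, satisfies $f(0)=f'(0)=f(L)=0$, together with $f'(L)=0$ for two Neumann edges or $f''(L)=0$ for two Dirichlet edges. Hence on the Neumann family $f$ solves Rosier's critical problem and, since $L\notin\mathcal{N}$, vanishes, giving $\varphi_1=\cdots=\varphi_m=:\varphi_{Ne}$; on the Dirichlet family $f$ solves the Glass--Guerrero problem and, since $L\notin\mathcal{N}^*$, vanishes, giving $\varphi_{m+1}=\cdots=\varphi_N=:\varphi_{Di}$. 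This is exactly the analogue of the auxiliary $\psi_j$ reduction used in the two preceding lemmas, and one may equally phrase it through the functions $\psi_j$.

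Collapsing the two families reduces the whole system to a two-edge problem for $(\varphi_{Ne},\varphi_{Di})$: both solve the ODE, share their value at the vertex with vanishing first derivative there, vanish at $x=L$, and satisfy $\varphi_{Ne}'(L)=\varphi_{Di}''(L)=0$, while the conservation condition $\sum_{j=1}^N\varphi_j''(0)=0$ becomes $m\,\varphi_{Ne}''(0)+(N-m)\,\varphi_{Di}''(0)=0$. This is precisely \eqref{spectral problem-2} with $\varsigma_{Ne}=m>0$ and $\varsigma_{Di}=N-m>0$, so Lemma~\ref{lemma 3.6} forces $\varphi_{Ne}=\varphi_{Di}=0$ and therefore $\varphi=0$, completing the contrapositive. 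The only point demanding care -- rather than a genuine obstacle -- is confirming that the reduced vertex weights $m$ and $N-m$ are both strictly positive, which is exactly what the hypothesis $1<m<N-1$ guarantees; this is precisely why Lemma~\ref{lemma 3.6} was stated for arbitrary positive $\varsigma_{Ne},\varsigma_{Di}$, and the remaining verifications are routine bookkeeping paralleling the earlier cases.
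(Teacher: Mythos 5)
Your proposal is correct and follows essentially the same route as the paper: sufficiency by planting $\pm z$ on two edges of the appropriate type (possible since $1<m<N-1$ gives $m\ge 2$ and $N-m\ge 2$), and necessity by showing all components within each family coincide (your pairwise differences are equivalent to the paper's $\psi_j=\psi-m\varphi_j$ and $\theta_j=\theta-(N-m)\varphi_j$ reduction) and then invoking Lemma~\ref{lemma 3.6} for \eqref{spectral problem-2} with $\varsigma_{Ne}=m$, $\varsigma_{Di}=N-m$.
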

\begin{proof} If $L \in \mathcal{N}\cup \mathcal{N}^*$ the result is a direct consequence of \cite[Lemma 3.5]{Rosier} and \cite[Propositions 1 and 2]{GG1} (see Lemmas \ref{lemma 3.7} and \ref{lemma 3.6}).  
%Consider $L \in \mathcal{N}\cup \mathcal{N}^*$. First, if $L \in \mathcal{N}$, from \cite[Lema 3.5]{Rosier}, there exist $\lambda \in\mathbb{C}$ and $z \in H^3(0,L)\backslash\{0\}$ such that
%	\begin{align*}
%		\left\{
%		\begin{cases}
%			\lambda z+\partial_{x}z+\partial_{x}^{3}z=0,\\
%			z(0)=z(L)=\partial_{x}z(0)=z_{x}(L)=0.
%		\end{array}
%		\right.
%	\end{align*}
%	Defining $\varphi_1=z,\ \varphi_2=-z,\ \varphi_j=0\text{ for }j=3,...,N\text{ and }\varphi=(\varphi_1,\varphi_2,...,\varphi_N)$ then $\varphi \in \mathbb{H}^3(\mathcal{T})\backslash\{0\}$ and $(\lambda,\varphi)$ satisfies \eqref{spectral problem-5}. Otherwise, if $L \in \mathcal{N}^*$ then, from \cite[Propositions 1 and 2]{GG1} there exist $\lambda \in\mathbb{C}$ and $z \in H^3(0,L)\backslash\{0\}$ such that
%	\begin{align*}
%		\left\{
%		\begin{cases}
%			\lambda z+\partial_{x}z+\partial_{x}^{3}z=0,\\
%			z(0)=z(L)=\partial_{x}z(0)=\partial_{x}^{2}z(L)=0.
%		\end{array}
%		\right.
%	\end{align*}
%	Defining $\varphi_j=0$ for $j=1,...,N-2$, $\varphi_{N-1}=z$, $\varphi_N=-z$ and 
%	$\varphi=(\varphi_1,\varphi_2,...,\varphi_N)$ then $\varphi \in \mathbb{H}^3(\mathcal{T})\backslash\{0\}$ and $(\lambda,\varphi)$ satisfies \eqref{spectral problem-5}.
%	
Conversely, suppose there exist \(\lambda \in \mathbb{C}\) and \(\varphi \in \mathbb{H}^3(\mathcal{T}) \setminus \{0\}\) satisfying \eqref{spectral problem-5}. We will demonstrate, by contradiction, that this implies \(L \in \mathcal{N} \cup \mathcal{N}^*\). Assume, for the sake of contradiction, that \(L \notin \mathcal{N} \cup \mathcal{N}^*\), and define
	\begin{align*}
		\psi=\sum_{j=1}^{m}\varphi_j\text{ \ \ and \ \ }\psi_j=\psi-m\varphi_j,\ j=1,...,m.
	\end{align*}
	For each $j \in \{1,...,m\}$, $\psi_j$ solves the problem
	\begin{align*}
		\begin{cases}
			\lambda \psi_j+\psi^{\prime}_{j}+\psi^{\prime\prime\prime}_{j}=0,\\
			\psi_j(0)=\psi_j(L)=\psi^{\prime}_{j}(0)=\psi^{\prime}_{j}(L)=0.
		\end{cases}
	\end{align*}
	Since $L\notin \mathcal{N}$, from \cite[Lemma 3.5]{Rosier} it follows that
	\begin{align*}
		\psi_j=0,\ j=1,...,m \implies \varphi_j=\frac{1}{m}\psi,\ j=1,...,m.
	\end{align*}
%	so
%	\begin{align*}
%		\varphi_j=\frac{1}{m}\psi,\ j=1,...,m.
%	\end{align*}
	Consequently,
$$	%\label{3.49}
		\varphi_j=\varphi_1,\ j=1,...,m.
$$

	Now define
	\begin{align*}
		\theta=\sum_{j=m+1}^N\varphi_j\text{ \ \ and \ \ }\theta_j=\theta-(N-m)\varphi_j,\ j=m+1,...,N.
	\end{align*}
	Note that, for each $j \in \{m+1,...,N\}$, $\theta_j$ solves the problem
	\begin{align*}
		\begin{cases}
			\lambda \theta_j+\theta^{\prime}_{j}+\theta^{\prime\prime\prime}_{j}=0,\\
			\theta_j(0)=\theta_j(L)=\theta^{\prime}_{j}(0)=\theta^{\prime\prime}_{j}(L)=0.
		\end{cases}
	\end{align*}
	Additionally, $L\notin \mathcal{N}^*$, from \cite[Propositions 1 and 2]{GG1} it follows that
	\begin{align*}
		\theta_j=0,\ j=m+1,...,N
	\end{align*}
	so that
	\begin{align*}
		\varphi_j=\frac{1}{N-m}\theta,\ j=m+1,...,N.
	\end{align*}
	Consequently,
	\begin{align}\label{3.50}
		\varphi_j=\varphi_N,\ j=m+1,...,N.
	\end{align}
This implies that, $\varphi_{1}$ and $\varphi_{N}$ satisfy the spectral problem
    \begin{align*}
	\begin{cases}
		\lambda\varphi_j+\varphi^{\prime}_{j}+\varphi^{\prime\prime\prime}_{j}=0,&x \in (0,l_j),\ j=1,N,\\
		\varphi_N(0)=\varphi_1(0),&\\
		m\varphi^{\prime\prime}_{1}(0)+(N-m)\varphi^{\prime\prime}_{N}(0)=0,&\\		
		\varphi_j(L)=\varphi^{\prime}_{j}(0)=0,&j=1,N,\\
		\varphi^{\prime}_{1}(L)=\varphi^{\prime\prime}_{N}(L)=0,&
	\end{cases}
\end{align*}
which corresponds to \eqref{spectral problem-2} in the case $\varsigma_{Ne}=m$ and $\varsigma_{Di}=N-m$. Therefore, the unique solution is $\varphi_{1}=\varphi_{N}=0$. Finally, as $0=\varphi_{1}=\varphi_{j}$ for $j=1,\dots,m$ and $0=\varphi_{N}=\varphi_{j}$ for $j=m+1,\dots,N$,  we obtain $\varphi=0$.\end{proof}

To conclude this part, we analyze the case of a full Neumann or Dirichlet boundary, i.e., we consider system \eqref{LkdV-2} with either $N$ Neumann or $N$ Dirichlet controls.
\subsubsection{\textbf{Full Neumann}} We start by analyzing the associate spectral problem arising for $N$ Neumann controls 
\begin{align}\label{spectral problem-Ne}
\begin{cases}			
\lambda\varphi_j+\varphi^{\prime}_{j}+\varphi^{\prime\prime\prime}_{j}=0,&x \in (0,l_j),\ j=1,...,N,\\
\varphi_j(0)=\varphi_1(0),&j=1,...,N,\\
\displaystyle\sum_{j=1}^N\varphi^{\prime\prime}_{j}(0)=0,&\\		
\varphi_j(l_j)=\varphi^{\prime}_{j}(0)=0,&j=1,...,N,\\
\varphi^{\prime}_{j}(l_j)=0,&j=1,...,N.
\end{cases}
\end{align}
For this case, we have the following result.
\begin{lemma}\label{lemma 3.9}
Let $L>0$ and assume $l_j=L$ for $j=1,...,N$. There exist $\lambda\in \mathbb{C}$ and $\varphi\in \mathbb{H}^3(\mathcal{T})\backslash\{0\}$ satisfying \eqref{spectral problem-Ne} if and only if $L \in \mathcal{N}\cup \mathcal{N}^*$.
\end{lemma}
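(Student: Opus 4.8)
The plan is to adapt the sum/difference decomposition used in the mixed cases to the present situation, where every edge carries the \emph{same} Neumann right condition, so that the reduction to the two-edge problem \eqref{spectral problem-2} is no longer available and a genuinely new single-edge problem appears. Given a pair $(\lambda,\varphi)$ solving \eqref{spectral problem-Ne}, I set $\xi_j=\varphi_1-\varphi_j$ and $\psi=\sum_{j=1}^N\varphi_j$. The conditions $\varphi_j(0)=\varphi_1(0)$ and $\varphi_j(L)=\varphi_j'(0)=\varphi_j'(L)=0$ show that each difference $\xi_j$ solves the Rosier (Neumann) single-edge problem with $\xi_j(0)=\xi_j(L)=\xi_j'(0)=\xi_j'(L)=0$, while the node condition $\sum_j\varphi_j''(0)=0$ becomes $\psi''(0)=0$, so that $\psi$ solves the \emph{symmetric} single-edge problem
\begin{align}\label{sym-neumann}
\begin{cases}
\lambda\psi+\psi'+\psi'''=0,& x\in(0,L),\\
\psi(L)=\psi'(0)=\psi''(0)=\psi'(L)=0.
\end{cases}
\end{align}
Since $\varphi\equiv 0$ if and only if $\psi\equiv 0$ and $\xi_j\equiv 0$ for all $j$, the problem \eqref{spectral problem-Ne} is entirely governed by these two scalar problems.

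For the direct implication, if $L\in\mathcal{N}$ I take the nontrivial $z$ of \cite[Lemma 3.5]{Rosier} with $z(0)=z(L)=z'(0)=z'(L)=0$ and set $\varphi_1=z,\ \varphi_2=-z,\ \varphi_j=0$ for $j\ge 3$, obtaining an antisymmetric nontrivial solution of \eqref{spectral problem-Ne}; if $L\in\mathcal{N}^*$ I take a nontrivial solution $\psi$ of \eqref{sym-neumann} (available by the core fact below) and set $\varphi_j=\psi/N$, which solves \eqref{spectral problem-Ne} because $\sum_j\varphi_j''(0)=\psi''(0)=0$. For the converse I argue by contradiction: if $(\lambda,\varphi)$ is nontrivial and $L\notin\mathcal{N}\cup\mathcal{N}^*$, then $L\notin\mathcal{N}$ and \cite[Lemma 3.5]{Rosier} force $\xi_j\equiv 0$, so all $\varphi_j$ coincide with $\psi/N$; and $L\notin\mathcal{N}^*$ together with the core fact force $\psi\equiv 0$, whence $\varphi\equiv 0$, a contradiction.

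It therefore remains to establish the core fact: \eqref{sym-neumann} has a nontrivial solution if and only if $L\in\mathcal{N}^*$. This is the main obstacle, precisely because \eqref{sym-neumann} is not literally one of the single-edge problems treated in \cite{Rosier,GG1} (its boundary data mix first- and second-order conditions at both endpoints). Following the Fourier computation of Lemma \ref{lemma 3.6}, I write $\lambda=ip$, $P(\xi)=\xi^3-\xi-p$, and integrate the equation against $e^{-ix\xi}$; the four boundary conditions of \eqref{sym-neumann} annihilate all boundary terms but two, yielding
\begin{align}\label{ft-sym}
P(\xi)\,\hat{\psi}(\xi)=i\big[(1-\xi^2)\psi(0)-\psi''(L)e^{-iL\xi}\big],\qquad \xi\in\mathbb{C}.
\end{align}
Since $\hat{\psi}$ is entire, the bracket must vanish at the three roots $\xi_0,\xi_1,\xi_2$ of $P$, giving $(1-\xi_s^2)\psi(0)=\psi''(L)e^{-iL\xi_s}$, and a nontrivial pair $(\psi(0),\psi''(L))$ exists exactly when $(1-\xi_s^2)e^{iL\xi_s}$ is independent of $s$.

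To identify this condition with membership in $\mathcal{N}^*$, I put $a_s=iL\xi_s$; then $\sum_s a_s=0$ and $\sum_{s<t}a_sa_t=L^2$, and the condition rewrites as $(L^2+a_s^2)e^{a_s}=\text{const}$. Using the identity $L^2+a_s^2=a_{s+1}a_{s+2}$ (indices mod $3$) and $a_0a_1a_2\ne 0$, valid for $\lambda\ne 0$ with simple roots, this is equivalent to $e^{a_s}/a_s=\text{const}$, i.e. $a_s e^{-a_s}=\text{const}$; replacing $a_s$ by $-a_s$ (equivalently $\lambda$ by $-\lambda$) turns it into $a_s e^{a_s}=\text{const}$, which together with $L^2=-(a_0^2+a_0a_1+a_1^2)$ is precisely the defining relation of $\mathcal{N}^*$ in \eqref{critical-2}. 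The degenerate situations $\lambda=0$ and $P$ having a multiple root are handled exactly as in Claims of Lemma \ref{lemma 3.6} (evaluation at $\xi=0,\pm 1$ and the modulus bound leading to $3L^2<0$), and yield only $\psi\equiv 0$, which completes the identification and hence the proof.
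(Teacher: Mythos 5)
Your reduction coincides with the paper's: split $\varphi$ into differences (which solve Rosier's Neumann single‑edge problem and hence vanish when $L\notin\mathcal{N}$) and the sum $\psi=\sum_j\varphi_j$, which solves the symmetric scalar problem $\lambda\psi+\psi'+\psi'''=0$, $\psi(L)=\psi'(0)=\psi'(L)=\psi''(0)=0$; your constructions for the forward direction (antisymmetric $(z,-z,0,\dots,0)$ when $L\in\mathcal{N}$, and $\varphi_j=\psi/N$ when $L\in\mathcal{N}^*$) are also the paper's. Where you genuinely differ is in identifying the critical lengths of the symmetric problem. The paper reflects ($\theta(x)=\psi(L-x)$, $\tilde\lambda=-\lambda$), expands $\theta$ in the exponential basis $e^{\mu_s x}$, and performs a Gauss--Jordan rank analysis of the resulting $4\times 3$ boundary matrix to arrive at $ae^a=be^b=ce^c$. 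You instead Fourier‑transform $\psi$ directly, force the bracket $(1-\xi^2)\psi(0)-\psi''(L)e^{-iL\xi}$ to vanish at the three roots of $\xi^3-\xi-p$, and convert $(1-\xi_s^2)e^{iL\xi_s}=\mathrm{const}$ into $a_se^{-a_s}=\mathrm{const}$ via the identity $L^2+a_s^2=a_{s+1}a_{s+2}$; the sign flip $a_s\mapsto -a_s$ then plays the role of the paper's reflection and lands exactly on \eqref{critical-2}. This algebra is correct and arguably more transparent than the rank computation.

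Two steps are missing, though both are repairable. First, your Fourier identity only yields \emph{necessity}: if a nontrivial $\psi$ exists, the bracket vanishes at the roots. For the ``if'' half of your core fact you must still produce a nontrivial $\psi$ when $(1-\xi_s^2)e^{iL\xi_s}$ is constant, either by a Paley--Wiener argument as in Rosier or, as the paper does, by exhibiting a nontrivial kernel of the $4\times 3$ exponential boundary system; as written, ``a nontrivial pair $(\psi(0),\psi''(L))$ exists exactly when\dots'' conflates solvability of two linear relations with solvability of the boundary‑value problem. Second, the degenerate cases cannot be dispatched ``exactly as in'' the claims of Lemma \ref{lemma 3.6}, which concern different boundary conditions; the double‑root computation must be redone for the symmetric problem (the paper's Claim \ref{claim 2.4} does this and reaches the contradiction $-4=5\sigma L$ with $\sigma\in i\mathbb{R}$), while your $\lambda=0$ treatment by evaluation at $\xi=0,\pm 1$ does go through. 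With these two routine completions, your argument is a valid alternative to the paper's.
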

\begin{proof}
Consider the function $\psi=\displaystyle\sum_{j=1}^{N}\varphi_{j}$. We will see that if the length is not critical, it is enough to analyze the spectral problem satisfied by the sum function.
\begin{claim}
If $L\notin \mathcal{N}$, then $\psi\equiv0$ if and only if $\varphi_{j}\equiv0$ for all $j=1,\dots, N$. 
\end{claim}
\begin{proof}
One direction is direct. In the other hand, if  $\psi\equiv0$, we have $0=\psi(0)=N\varphi_{j}(0)$, thus $\varphi_{j}$ solves
\[\begin{cases}
\lambda \varphi_j+\varphi^{\prime}_{j}+\varphi^{\prime}_{j}=0,\\
\varphi_j(0)=\varphi_j(L)=\varphi^{\prime}_{j}(0)=\varphi^{\prime}_{j}(L)=0.   
\end{cases}\]
Since $L\notin \mathcal{N}$, from \cite[Lemma 3.5]{Rosier} it follows that $\varphi_{j}\equiv 0$.
\end{proof}
The condition $L\notin \mathcal{N}$ is necessary. In fact, if not, we can not control $\psi_{j}=\varphi_{j}-\dfrac{1}{N}\psi$. By the previous claim, it is enough to study when we have a non-trivial function $\psi$. We can immediately see that $\psi$ solves

\[\begin{cases}
\lambda \psi+\psi^{\prime}+\psi^{\prime\prime\prime}=0,\\
\psi(L)=\psi^{\prime}(0)=\psi^{\prime}(L)=\psi^{\prime\prime}(0)=0.   
\end{cases}\]

Considering $\theta(x)=\psi(L-x)$ and $\tilde{\lambda}=-\lambda$, we get

\begin{align}
\label{eq: theta}
\begin{cases}
\tilde{\lambda}\theta+\theta^{\prime}+\theta^{\prime\prime\prime}=0,\\
\theta(0)=\theta^{\prime}(L)=\theta^{\prime}(0)=\theta^{\prime\prime}(L)=0.   
\end{cases}
\end{align}
Define $P_{\Tilde{\lambda}}(\mu)=\mu^{3}+\mu+\Tilde{\lambda}$.
\begin{claim}\label{claim 2.4} If $P_{\Tilde{\lambda}}$ admits multiples roots, then $\theta=0$.
\end{claim}
\begin{proof}
We follow the proof of Claim~\ref{claim 2.3}. Multiplying the system \eqref{eq: theta} by $e^{-ix\xi}$, integrating by parts in $(0,L)$ and using the boundary conditions we get, for every $\xi \in \mathbb{C}$
\[(\xi^{3}-\xi-p)\hat{\theta}(\xi)=i\kappa(1-\xi^{2})e^{-iL\xi}+\beta,\]
where $p=i\Tilde{\lambda}$, $\kappa=-\theta(L)$ and $\beta=\theta^{\prime\prime}(0)$. If $P_{\Tilde{\lambda}}$ has a triple root, then by Girard's relations $\tilde{\lambda}=0$, thus $p=0$ and $\hat{\theta}$ satisfies 
\[(\xi^{3}-\xi)\hat{\theta}(\xi)=i\kappa(1-\xi^{2})e^{-iL\xi}+\beta, \qquad \forall \xi \in \mathbb{C}.\]
Evaluating in $\xi=1$, we get $\theta^{\prime\prime}(0)=\beta=0$. Therefore, $\theta$ satisfies $\theta(0)=\theta^{\prime}(0)=\theta^{\prime\prime}(0)=0$, which implies $\theta=0$. If $P_{\Tilde{\lambda}}$ has a double root $\mu_{1}=\mu_{2}=\sigma$, by Girard's relations $\sigma=\pm\dfrac{i}{\sqrt{3}}$ and the solution $\theta$ of \eqref{eq: theta} can be written as
\[\theta(x)=d_{0}e^{-2\sigma x}+d_{1}e^{\sigma x}+d_{2}xe^{\sigma x}.\]
The boundary condition $\theta(0)=0$ gives us $d_{0}+d_{1}=0$. While, $\theta^{\prime}(0)=\theta^{\prime}(L)$ give us respectively
\[-2\sigma d_{0}+d_{1}\sigma+d_2=0\quad \text{and}\quad -2\sigma d_{0}e^{-2\sigma L}+d_{1}\sigma e^{\sigma L}+d_{2}(e^{\sigma L}+L\sigma e^{\sigma L})=0.\]
Using $d_{0}=-d_{1}$, we get
\[\begin{cases}
3\sigma d_{1}+d_{2}=0\\
\sigma d_{1}(2e^{-3\sigma L}+1)+d_{2}(1+\sigma L)=0.
\end{cases}\]
The previous system has a non-trivial solution if and only if $\sigma L=2\sigma e^{-3\sigma L}$. In that case $d_{2}=-3\sigma d_{1}$ On the other hand, the condition $\theta^{\prime\prime}(L)=0$ provides that
%\begin{align*}
%	4d_0\sigma^2e^{-2\sigma L}+d_1\sigma^2 e^{\sigma L}+2d_2\sigma e^{\sigma L}+d_2L \sigma^2 e^{\sigma L}=0,
%\end{align*}
%or equivalently,
\begin{align*}
	d_1\sigma+d_2(2+L\sigma)=-4d_0\sigma e^{-3\sigma L}.
\end{align*}
Using $d_{0}=-d_{1}$, $\sigma L=2\sigma e^{-3\sigma L}$ and $d_{2}=-3\sigma d_{1}$ we obtain $-4=5\sigma L$, which is not possible since $L>0$ and $\sigma \in i\mathbb{R}$, and Claim \ref{claim 2.4} follows.
\end{proof}
By the previous claim, $\theta(x)=d_{0}e^{\mu_{0}x}+d_{1}e^{\mu_{1}x}+d_{2}e^{\mu_{2}x}$, where $\mu_{0}$, $\mu_{1}$ and $\mu_{2}$ are the simple roots of the characteristic polynomial $\mu^{3}+\mu+\tilde{\lambda}=0$. By imposing the boundary conditions, we deduce the following overdetermined system 
\[\begin{cases}
d_{0}+d_{1}+d_{2}=0\\
\mu_{0}d_{0}+\mu_{1}d_{1}+\mu_{2}d_{2}=0\\
\mu_{0}d_{0}e^{\mu_{0}L}+\mu_{1}d_{1}e^{\mu_{1}L}+\mu_{2}d_{2}e^{\mu_{2}L}=0\\
\mu_{0}^{2}d_{0}e^{\mu_{0}L}+\mu_{1}^{2}d_{1}e^{\mu_{1}L}+\mu_{2}^{2}d_{2}e^{\mu_{2}L}=0.
\end{cases}\]
By calling $a=\mu_{0}L$, $b=\mu_{1}L$ and $c=\mu_{2}L$, we obtain the following matrix system
\[\begin{bmatrix}
    1&1&1\\
    ae^a&be^b&ce^c\\
    a&b&c\\
    a^{2}e^a&b^{2}e^b&c^{2}e^c
\end{bmatrix}\begin{bmatrix}
C_{1}\\C_{2}\\C_{3}
\end{bmatrix}=\begin{bmatrix}
0\\0\\0\\0
\end{bmatrix}.\]
By Gauss-Jordan elimination
\[\begin{bmatrix}
    1&1&1\\
    ae^a&be^b&ce^c\\
    a&b&c\\
    a^{2}e^a&b^{2}e^b&c^{2}e^c
\end{bmatrix}
%\overset{r_4\leftrightarrow r_4-a r_2}{\tikz \draw[thick, ->] (-0.2, 0) -- (1.7, 0);}\begin{bmatrix}
%    1&1&1\\
%    ae^a&be^b&ce^c\\
%    a&b&c\\
%    0&b(b-a)e^b&c(c-a)e^c
%\end{bmatrix}\overset{r_3\leftrightarrow r_3-a r_1}{\tikz \draw[thick, ->] (-0.2, 0) -- (1.7, 0);}\begin{bmatrix}
%    1&1&1\\
%    ae^a&be^b&ce^c\\
%    0&b-a&c-a\\
%    0&b(b-a)e^b&c(c-a)e^c
%\end{bmatrix}\]
%\[\overset{r_3\leftrightarrow r_4-be^{b}r_3}{\tikz \draw[thick, ->] (-0.2, 0) -- (1.7, 0);}
\sim\begin{bmatrix}
   1&1&1\\
    ae^a&be^b&ce^c\\
    0&b-a&c-a\\
    0&0&(c-a)(ce^c-be^{b})
\end{bmatrix}=
\begin{bmatrix}
    &A_{a,b,c}&\\
    0&0&B_{a,b,c}
\end{bmatrix}.\]
We have a non-trivial function $\theta$ if and only if
\begin{itemize}
    \item The rank of $A_{a,b,c}$ is one.
    \item The rank of $A_{a,b,c}$ is two and $B_{a,b,c}=0$.
\end{itemize}
Easy computations give us 
\begin{itemize}
    \item $rank(A_{a,b,c})=1$, if and only if $a=b=c$. This case is not possible, because the roots are simple.
    \item $rank(A_{a,b,c})=2$
    \begin{enumerate}
        \item $a=b\neq c$, $be^{b}=ce^{c}$. Not possible.
        \item $a=b$, $be^{b}\neq ce^{a}$. Not possible.
        \item  $ae^{a}\neq be^{b}$, $ce^{c}(a-b)+be^{b}(c-a)+ae^{a}(b-c)=0$.
        \item $a\neq b$, $ae^{a}=be^{b}=ce^{c}$.
    \end{enumerate}
\end{itemize}
On the other hand, again the roots are simple $B_{a,b,c}=0$ if and only if $be^{b}=ce^{c}$. Using this in the third case, we get
\[0=ce^{c}(a-b)+be^{b}(c-a)+ae^{a}(b-c)=(b-c)(ae^{a}-ce^{c}),\]
which implies $ae^{a}=ce^{c}=be^{b}$, that contradicts the third case. Finally, in the fourth case, we have a non-trivial solution if $ae^{a}=be^{b}=ce^{c}$. Recall that Girard's relations $a+b+c=0$ and $L^{2}=-(a^{2}+ab+b^{2})$, which correspond to the expression \eqref{critical-2} of the critical lengths $\mathcal{N}^{*}$.
\end{proof}
\begin{remark}
In the previous analysis, we have shown that different spectral problems could have the same set of critical lengths. $\mathcal{N}^{*}$ is the set of critical lengths of the spectral problem \eqref{eq: theta} which is a different one than \eqref{eq: critical dir} studied in \cite{GG1}.

\end{remark}
%%%%%%%%%%%%%%%%%%%%
\subsubsection{\textbf{Full Dirichlet}}
We pass now to the full Dirichlet. By Lemma~\ref{lemma 3.5}, it is enough to focus on the spectral problem \eqref{spectral problem}.
\begin{align}\label{spectral problem-Di}
\begin{cases}			
\lambda\varphi_j+\varphi^{\prime}_{j}+\varphi^{\prime\prime\prime}_{j}=0,&x \in (0,l_j),\ j=1,...,N,\\
\varphi_j(0)=\varphi_1(0),&j=1,...,N,\\
\displaystyle\sum_{j=1}^N\varphi^{\prime\prime}_{j}(0)=0,&\\		
\varphi_j(l_j)=\varphi^{\prime}_{j}(0)=0,&j=1,...,N,\\
\varphi^{\prime\prime}_{j}(l_j)=0,&j=1,...,N.
\end{cases}
\end{align}
For this case, we have the following result.
\begin{lemma}\label{lemma 3.10}
Let $L>0$ and assume $l_j=L$ for $j=1,...,N$. There exist $\lambda\in \mathbb{C}$ and $\varphi\in \mathbb{H}^3(\mathcal{T})\backslash\{0\}$ satisfying \eqref{spectral problem-Di} if and only if $L \in \mathcal{N}^{*}\cup \mathcal{N}^{\dagger}$.
\end{lemma}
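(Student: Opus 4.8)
The plan is to mirror the Full Neumann argument (Lemma~\ref{lemma 3.9}), with Rosier's set $\mathcal{N}$ replaced by the Glass--Guerrero set $\mathcal{N}^*$, and with the critical condition of the sum function identified with $\mathcal{N}^{\dagger}$. First I introduce $\psi=\sum_{j=1}^N\varphi_j$. Summing the equations in \eqref{spectral problem-Di} and using the coupling $\sum_{j}\varphi_j''(0)=0$ shows that $\psi$ solves
\[\begin{cases}\lambda\psi+\psi'+\psi'''=0,\\ \psi(L)=\psi'(0)=\psi''(0)=\psi''(L)=0.\end{cases}\]
The reduction step is the following observation: if $\psi\equiv0$ while $\varphi\neq0$, then $\varphi_j(0)=\psi(0)/N=0$ for every $j$, so each $\varphi_j$ solves the single-edge problem $\lambda z+z'+z'''=0$ with $z(0)=z(L)=z'(0)=z''(L)=0$; since some $\varphi_{j_0}\neq0$, \cite[Propositions 1 and 2]{GG1} force $L\in\mathcal{N}^*$. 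Thus the case of vanishing sum already lands in $\mathcal{N}^*$.

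It then remains to characterize when the $\psi$-system has a nontrivial solution and to show this happens exactly for $L\in\mathcal{N}^{\dagger}$. Following Lemma~\ref{lemma 3.9} I set $\theta(x)=\psi(L-x)$ and $\tilde\lambda=-\lambda$, so that $\tilde\lambda\theta+\theta'+\theta'''=0$ with the cleaner conditions $\theta(0)=\theta''(0)=\theta'(L)=\theta''(L)=0$. Writing $P_{\tilde\lambda}(\mu)=\mu^3+\mu+\tilde\lambda$, I first dispose of the degenerate spectra by a claim in the spirit of Claim~\ref{claim 2.4}: a triple root forces $\tilde\lambda=0$ and then $\theta=0$, while a double root $\sigma=\pm i/\sqrt{3}$, after imposing the four boundary conditions, leads to an impossible identity forcing a positive quantity to equal a negative one. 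Hence only simple roots $\mu_0,\mu_1,\mu_2$ survive and $\theta(x)=\sum_s d_s e^{\mu_s x}$. Setting $a=\mu_0L$, $b=\mu_1L$, $c=\mu_2L$, the four boundary conditions yield the homogeneous system with matrix
\[\begin{bmatrix}1&1&1\\ a^2&b^2&c^2\\ ae^a&be^b&ce^c\\ a^2e^a&b^2e^b&c^2e^c\end{bmatrix}.\]
Because the first two rows are independent (the $\mu_s$ are distinct), a nontrivial kernel is equivalent to the vanishing of the two $3\times3$ minors built from rows $\{1,2,3\}$ and $\{1,2,4\}$. Reading these as two linear relations in $(e^a,e^b,e^c)$, their solution space is one-dimensional and spanned by $(1/a^2,1/b^2,1/c^2)$ — which one verifies using the identity $a^3(b-c)+b^3(c-a)+c^3(a-b)=-(a-b)(b-c)(c-a)(a+b+c)$ together with $a+b+c=0$. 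This gives exactly $a^2e^a=b^2e^b=c^2e^c$. Combined with Girard's relations $a+b+c=0$ and $L^2=-(a^2+ab+b^2)$ (so that, with $c=-(a+b)$, $c^2e^c=(a+b)^2e^{-(a+b)}$), this is precisely the defining condition of $\mathcal{N}^{\dagger}$ in \eqref{critical-3}.

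Finally I assemble the equivalence. For the ``only if'' direction: given a nontrivial $\varphi$, either $\psi\not\equiv0$, whence the $\theta$-analysis gives $L\in\mathcal{N}^{\dagger}$, or $\psi\equiv0$, whence the reduction step gives $L\in\mathcal{N}^*$. For the ``if'' direction: if $L\in\mathcal{N}^*$, take a nonzero $z$ solving the single-edge $\mathcal{N}^*$-problem and set $\varphi_1=z$, $\varphi_2=-z$, $\varphi_j=0$ for $j\geq3$ (this uses $N\geq2$ and verifies every condition in \eqref{spectral problem-Di}); if $L\in\mathcal{N}^{\dagger}$, the simple-root construction produces a nonzero $\psi$, and $\varphi_j:=\psi/N$ for all $j$ solves \eqref{spectral problem-Di}. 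I expect the main obstacle to be the core algebraic step: reducing the rank condition of the $4\times3$ matrix to the clean symmetric form $a^2e^a=b^2e^b=c^2e^c$ and matching it to \eqref{critical-3}. A secondary technical point is confirming that the double-root case genuinely yields a contradiction for these particular boundary conditions (they differ from the Neumann ones), and checking in the ``if'' part that the triple $(a,b,c)$ furnished by $\mathcal{N}^{\dagger}$ corresponds to simple roots, so that the exponential ansatz is legitimate.
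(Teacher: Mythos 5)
Your proposal is correct and follows essentially the same route as the paper: reduce to the sum function $\psi=\sum_j\varphi_j$ (handling the $\psi\equiv0$ case via the single-edge Glass--Guerrero problem and $\mathcal{N}^*$), reflect to $\theta(x)=\psi(L-x)$, exclude multiple roots of $P_{\tilde\lambda}$, and reduce the $4\times3$ boundary system for simple roots to $a^{2}e^{a}=b^{2}e^{b}=c^{2}e^{c}$, which with $a+b+c=0$ and $L^{2}=-(a^{2}+ab+b^{2})$ is the definition of $\mathcal{N}^{\dagger}$. Your packaging of the rank condition via the two $3\times3$ minors and the explicit kernel vector $(1/a^{2},1/b^{2},1/c^{2})$ is a mild (and arguably cleaner) variant of the paper's Gauss--Jordan case analysis, and you make the ``if'' direction explicit where the paper leaves it implicit.
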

As in the full Neumann case, it is enough to analyze the spectral problem associated with the sum function $\psi=\displaystyle\sum_{j=1}^{N}\varphi_{j}$. 
\begin{claim}
If $L\notin \mathcal{N}^{*}$, then $\psi\equiv0$ if and only if $\varphi_{j}\equiv0$ for all $j=1,\dots, N$. 
\end{claim}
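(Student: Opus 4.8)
The plan is to establish the two implications of the equivalence separately, following verbatim the strategy already used for the Full Neumann claim. The forward implication is immediate: if $\varphi_j \equiv 0$ for every $j$, then by definition $\psi = \sum_{j=1}^N \varphi_j \equiv 0$, so nothing needs to be proved.

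For the reverse implication, assume $\psi \equiv 0$. The first step would be to exploit the vertex continuity condition $\varphi_j(0) = \varphi_1(0)$ appearing in \eqref{spectral problem-Di}. Evaluating $\psi$ at the central node gives $0 = \psi(0) = \sum_{j=1}^N \varphi_j(0) = N\varphi_1(0)$, whence $\varphi_1(0) = 0$, and by the same continuity condition $\varphi_j(0) = 0$ for every $j = 1, \dots, N$.

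The second step would be to collect, for each fixed $j$, the remaining boundary data carried by \eqref{spectral problem-Di}, namely $\varphi_j(L) = 0$, $\varphi_j'(0) = 0$, and $\varphi_j''(L) = 0$. Together with the newly obtained $\varphi_j(0) = 0$, this shows that each $\varphi_j$ solves the decoupled scalar spectral problem
\[
\begin{cases}
\lambda \varphi_j + \varphi_j' + \varphi_j''' = 0, \\
\varphi_j(0) = \varphi_j(L) = \varphi_j'(0) = \varphi_j''(L) = 0,
\end{cases}
\]
which is exactly the problem \eqref{eq: critical dir} whose critical set is $\mathcal{N}^*$. Since $L \notin \mathcal{N}^*$ by hypothesis, the characterization in \cite[Propositions 1 and 2]{GG1} forces $\varphi_j \equiv 0$ for each $j$, which yields the claim.

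I do not expect a genuine obstacle here, as the argument is a direct transcription of the Full Neumann claim; the one point requiring care is that the right endpoint now carries the condition $\varphi_j''(L) = 0$ rather than $\varphi_j'(L) = 0$, so that the relevant decoupled problem is the one associated with $\mathcal{N}^*$ (via \cite{GG1}) and not the one associated with $\mathcal{N}$ (via \cite{Rosier}). This is precisely what makes the hypothesis $L \notin \mathcal{N}^*$---rather than $L \notin \mathcal{N}$---the correct one in the Dirichlet setting.
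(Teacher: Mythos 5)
Your argument is correct and is essentially identical to the paper's own proof: evaluate $\psi$ at the vertex to get $\varphi_j(0)=0$, observe that each $\varphi_j$ then solves the decoupled problem $\varphi_j(0)=\varphi_j(L)=\varphi_j'(0)=\varphi_j''(L)=0$, and invoke \cite[Propositions 1 and 2]{GG1} with $L\notin\mathcal{N}^*$. In fact your write-up is slightly more careful than the printed proof, which contains a typo in the decoupled boundary conditions (writing $\varphi_j'(L)=0$ where the condition inherited from \eqref{spectral problem-Di} is $\varphi_j''(L)=0$); your identification of the right endpoint condition, and hence of $\mathcal{N}^*$ rather than $\mathcal{N}$, is the correct one.
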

\begin{proof}
One direction is direct. In the other hand, if  $\psi\equiv0$, we have $0=\psi(0)=N\varphi_{j}(0)$, thus $\varphi_{j}$ solves
\[\begin{cases}
\lambda \varphi_j+\varphi^{\prime}_{j}+\varphi^{\prime\prime}_{j}=0,\\
\varphi_j(0)=\varphi_j(L)=\varphi^{\prime}_{j}(0)=\varphi^{\prime}_{j}(L)=0.   
\end{cases}\]
			Since $L\notin \mathcal{N}^{*}$, from \cite[Proposition 1 and 2]{GG1} it follows that $\varphi_{j}\equiv 0$.
\end{proof}

We can immediately see that $\psi$ solves

\[\begin{cases}
\lambda \psi+\psi^{\prime}+\psi^{\prime\prime\prime}=0,\\
\psi(L)=\psi^{\prime}(0)=\psi^{\prime\prime}(L)=\psi^{\prime\prime}(0)=0.   
\end{cases}\]

Considering $\theta(x)=\psi(L-x)$ and $\tilde{\lambda}=-\lambda$, we get

\begin{align}
\label{eq: theta dir}
\begin{cases}
\tilde{\lambda}\theta+\theta^{\prime}+\theta^{\prime\prime}=0,\\
\theta(0)=\theta^{\prime}(L)=\theta^{\prime\prime}(0)=\theta^{\prime\prime}(L)=0.   
\end{cases}    
\end{align}
Define $P_{\Tilde{\lambda}}(\mu)=\mu^{3}+\mu+\Tilde{\lambda}$.
\begin{claim}\label{claim 2.5} If $P_{\Tilde{\lambda}}$ admits multiples roots, then $\theta=0$.
\end{claim}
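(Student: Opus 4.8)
The plan is to mirror the proof of Claim~\ref{claim 2.4} (the full Neumann case), adapting it to the Dirichlet traces appearing in \eqref{eq: theta dir}. First I would pass to the Fourier-type transform $\hat{\theta}(\xi)=\int_0^L\theta(x)e^{-ix\xi}\,dx$: multiplying $\tilde{\lambda}\theta+\theta'+\theta'''=0$ by $e^{-ix\xi}$, integrating by parts three times on $(0,L)$, and inserting the boundary conditions $\theta(0)=\theta''(0)=\theta'(L)=\theta''(L)=0$, all boundary contributions collapse except those carrying the two surviving traces $\theta(L)$ and $\theta'(0)$. Writing $p=-i\tilde{\lambda}$, $\kappa=-\theta(L)$ and $\delta=\theta'(0)$, this produces the single algebraic identity
\begin{equation*}
(\xi^3-\xi-p)\,\hat{\theta}(\xi)=i\kappa(1-\xi^2)e^{-iL\xi}-\xi\,\delta,\qquad \forall\,\xi\in\mathbb{C}.
\end{equation*}
A multiple root of $P_{\tilde{\lambda}}(\mu)=\mu^3+\mu+\tilde{\lambda}$ is either triple or double, and I would treat the two cases separately via Girard's relations.

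For the triple root, Girard's relations force all three roots to be $0$, hence $\tilde{\lambda}=0$ and $p=0$. Evaluating the identity at $\xi=1$ is the decisive step: there $\xi^3-\xi=0$ and $1-\xi^2=0$ vanish simultaneously, so both the $\hat{\theta}$ term and the $\kappa$ term drop out and one is left with $\delta=0$, i.e. $\theta'(0)=0$. Combined with the boundary data $\theta(0)=\theta''(0)=0$, the function $\theta$ then solves the third-order ODE with zero Cauchy data at $x=0$, and uniqueness of the initial value problem yields $\theta\equiv 0$.

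For the double root $\mu_1=\mu_2=\sigma$ (so $\mu_0=-2\sigma$), Girard's relations give $3\sigma^2=-1$, that is $\sigma=\pm i/\sqrt{3}$, and the general solution reads $\theta(x)=d_0e^{-2\sigma x}+d_1e^{\sigma x}+d_2xe^{\sigma x}$. I would then impose the four boundary conditions: $\theta(0)=0$ gives $d_0=-d_1$; $\theta''(0)=0$ gives $d_2=\tfrac{3}{2}\sigma d_1$; substituting these into $\theta'(L)=0$ and $\theta''(L)=0$ leaves two relations in $d_1$ alone. An appropriate combination of the two cancels the $\sigma L$-proportional terms and reduces to $e^{3\sigma L}=4$ whenever $d_1\neq 0$. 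Since $\sigma\in i\mathbb{R}$, the left-hand side has modulus one, a contradiction; hence $d_1=0$, whence $d_0=d_2=0$ and $\theta\equiv 0$.

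The integration by parts and the linear algebra are routine; the only genuinely delicate points are choosing the evaluation node $\xi=1$ in the triple-root case (so that two terms cancel at once) and arranging the two $L$-boundary relations in the double-root case so that the exponentials collapse and the modulus obstruction $|e^{3\sigma L}|=1\neq 4$ surfaces. Both are exactly the mechanisms already exploited in Claims~\ref{claim 2.3} and \ref{claim 2.4}, with $\theta'(0)$ and the second-derivative traces now playing the roles of the Neumann data, so I expect no essential new difficulty beyond correctly bookkeeping the Dirichlet boundary terms.
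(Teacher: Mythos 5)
Your proposal is correct and follows the paper's strategy step for step: the Fourier-type identity for $\hat{\theta}$, Girard's relations to split into triple and double roots, evaluation at $\xi=1$ to kill both the $\hat{\theta}$-term and the $\kappa$-term in the triple-root case, and the explicit basis $\{e^{-2\sigma x},e^{\sigma x},xe^{\sigma x}\}$ with $\sigma=\pm i/\sqrt{3}$ in the double-root case. The triple-root argument is identical to the paper's. In the double-root case you diverge in a way worth recording: you fix $d_0=-d_1$ and $d_2=\tfrac{3}{2}\sigma d_1$ from the two conditions at $x=0$ (namely $\theta(0)=\theta''(0)=0$) and then obtain the contradiction from the two conditions at $x=L$ — subtracting the relations coming from $\theta'(L)=0$ and $\theta''(L)=0$ cancels the $\sigma L$-terms and forces $e^{3\sigma L}=4$, impossible since $\sigma\in i\mathbb{R}$ gives $|e^{3\sigma L}|=1$ (I have checked the brackets: they are $2e^{-3\sigma L}+\tfrac{5}{2}+\tfrac{3}{2}\sigma L$ and $-4e^{-3\sigma L}+4+\tfrac{3}{2}\sigma L$, whose difference indeed yields $e^{-3\sigma L}=\tfrac14$). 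The paper instead derives $d_2=-3\sigma d_1$ from what it labels ``$\theta'(L)=0$'', but the displayed equation $-2\sigma d_0+d_1\sigma+d_2=0$ is the formula for $\theta'(0)$, which is \emph{not} among the boundary conditions of \eqref{eq: theta dir}; it then concludes $d_1=0$ from $\theta''(0)=0$ alone. Your version uses only the four traces that $\theta$ actually satisfies, at the modest cost of reintroducing the modulus obstruction already used in Claims \ref{claim 2.3} and \ref{claim 2.4}, and is therefore the more reliable of the two computations.
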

\begin{proof}
We follow the proof of Claim~\ref{claim 2.3}. Multiplying the system \eqref{eq: theta dir} by $e^{-ix\xi}$, integrating by parts in $(0,L)$ and using the boundary conditions we get, for every $\xi \in \mathbb{C}$
\[(\xi^{3}-\xi-p)\hat{\theta}(\xi)=i\kappa(1-\xi^{2})e^{-iL\xi}+\iota\xi,\]
where $p=i\Tilde{\lambda}$, $\kappa=-\theta(L)$ and $\iota=i\theta^{\prime}(0)$. If $P_{\Tilde{\lambda}}$ has a triple root, then by Girard's relations $\tilde{\lambda}=0$, thus $p=0$ and $\hat{\theta}$ satisfies 
\[(\xi^{3}-\xi)\hat{\theta}(\xi)=i\kappa(1-\xi^{2})e^{-iL\xi}+\iota\xi, \qquad \forall \xi \in \mathbb{C}.\]
Evaluating in $\xi=1$, we get $\theta^{\prime}(0)=\iota=0$. Therefore, $\theta$ satisfies $\theta(0)=\theta^{\prime}(0)=\theta^{\prime\prime}(0)=0$, which implies $\theta=0$.
\\

\noindent If $P_{\Tilde{\lambda}}$ has a double root $\mu_{1}=\mu_{2}=\sigma$, by Girard's relations $\sigma=\pm\dfrac{i}{\sqrt{3}}$ and the solution $\theta$ of \eqref{eq: theta dir} can be written as
\[\theta(x)=d_{0}e^{-2\sigma x}+d_{1}e^{\sigma x}+d_{2}xe^{\sigma x}.\]
The boundary conditions $\theta(0)=\theta^{\prime}(L)=0$ gives us respectively
\[d_0+d_1=0, \quad \text{and} \quad -2\sigma d_{0}+d_{1}\sigma+d_2=0,\]
from where we get $d_{0}=-d_{1}$ and $d_{2}=-3\sigma d_{1}$. On the other hand, the boundary condition $\theta^{\prime\prime}(0)$ gives
\[4d_0\sigma^2+d_1\sigma^2+2d_2\sigma=0.\]
Replacing $d_{0}=-d_{1}$ and $d_{2}=-3\sigma d_{1}$ in the above expression we get $d_{1}=0$, and finally $\theta=0$, showing the Claim \ref{claim 2.5}.
\end{proof}
By the previous claim, $\theta(x)=d_{0}e^{\mu_{0}x}+d_{1}e^{\mu_{1}x}+d_{2}e^{\mu_{2}x}$, where $\mu_{0}$, $\mu_{1}$ and $\mu_{2}$ are the simple roots of the characteristic polynomial $\mu^{3}+\mu+\tilde{\lambda}=0$. By imposing the boundary conditions and  calling $a=\mu_{0}L$, $b=\mu_{1}L$ and $c=\mu_{2}L$, we obtain the following matrix system
\[\begin{bmatrix}
    1&1&1\\
     ae^a&be^b&ce^c\\
    a^2&b^2&c^2\\
    a^{2}e^a&b^{2}e^b&c^{2}e^c
\end{bmatrix}\begin{bmatrix}
C_{1}\\C_{2}\\C_{3}
\end{bmatrix}=\begin{bmatrix}
0\\0\\0\\0
\end{bmatrix}.\]
By Gauss-Jordan elimination, $c\neq 0$
\[\begin{bmatrix}
    1&1&1\\
    ae^a&be^b&ce^c\\
    a&b&c\\
    a^{2}e^a&b^{2}e^b&c^{2}e^c
\end{bmatrix}\thicksim\begin{bmatrix}
    1&1&1\\
    ae^a&be^b&ce^c\\
    0&b^2-a^2&c^2-a^2\\
    0&0&\dfrac{(c-a)}{c}(c^2e^c-b^2e^{b})
\end{bmatrix}=\begin{bmatrix}
    &\Tilde{A}_{a,b,c}&\\
    0&0&\Tilde{B}_{a,b,c}
\end{bmatrix}.\]
Easy computations give us 
\begin{itemize}
    \item $rank(\tilde{A}_{a,b,c})=1$, if and only if
    \[ce^c-ae^a=0, \ a^2-c^2=0, ce^c-be^b=0, \  b^2-c^2=0.\]
    Not possible.
    \item $rank(\tilde{A}_{a,b,c})=2$
    \begin{enumerate}
        \item  $ce^c-ae^a=0$, $a^2-b^2=0$, $ce^c-be^b=0$, $b^2-c^2\neq0.$ Not possible.
        \item $be^b-ae^a=0$, $a^2-b^2=0$, $ce^c-be^b\neq0$. 
        \item $ce^c-ae^a=0$, $a^2-b^2\neq0$, $ce^c-be^b=0$. Not possible, because $L\notin \mathcal{N}^*$.
        \item $a^2e^a(b-c)+b^2e^b(c-a)+c^{2}e^c(a-b)=0$.
    \end{enumerate}
\end{itemize}
On the other hand, by symmetry
\[\Tilde{B}_{a,b,c}=\dfrac{(c-a)(c^{2}e^{c}-b^{2}e^{b})}{c}=0, \quad \Longrightarrow c^{2}e^{c}=b^{2}e^{b}=a^{2}e^{a}.\]
Then we have a non-trivial solution if $c^{2}e^{c}=b^{2}e^{b}=a^{2}e^{a}$. Recall that Girard's relations $a+b+c=0$ and $L^{2}=-(a^{2}+ab+b^{2})$, which corresponds to the expression \eqref{critical-3} of the critical lengths $\mathcal{N}^{\dagger}$.

%%%%%%%%%%%%%%%%%%

\subsection{Proof of Theorem \ref{main-a}} The main result in this work, Theorem \ref{main-a}, is a consequence of Lemmas \ref{lemma 3.4} and \ref{lemma 3.5}, combined with the results obtained in the study of each case in \eqref{cases}, through Lemmas \ref{lemma 3.6}, \ref{lemma 3.6-b}, \ref{lemma 3.7}, \ref{lemma 3.8}, \ref{lemma 3.9} and \ref{lemma 3.10}.
%%%%%%

%\section{Further comments}\label{sec5}

\appendix
\section{A review of trace estimates}\label{trace-review}
Let us review some results for the single KdV equation to see it. Precisely, the first result is given by \cite[Propositions 2.7-2.9]{Bona Sun Zhang} and \cite[Proposition 2.2]{Capistrano 2017}.
\begin{proposition}\label{W.P.KdV(0,L)-1} Let $L,T>0$ be given. For any $(h_1,h_2,h_3)\in H^\frac{1}{3}(0,T)\times H^\frac{1}{3}(0,T)\times L^2(0,T)$ there exists a unique solution $v \in \mathcal{Z}_T:=C([0,T];L^2(0,L))\cap L^2(0,L;H^1(0,L))$ to the problem
\begin{align}\label{KdV(0,L)-1}
\begin{cases}
\partial_{t}v+\partial_{x}v+\partial_{x}^{3}v=0,&t\in(0,T),\ x \in (0,L),\\
v(t,0)=h_1(t),\ \ v(t,L)=h_2(t),\ \ \partial_{x}v(t,L)=h_3(t),&t \in(0,T),\\
v(0,x)=0,&x \in (0,L)
\end{cases}
\end{align}
which possesses the sharp Kato smoothing property
\begin{align*}
\partial_x^kv\in L_x^\infty\left(0,L;H^{\frac{1-k}{3}}(0,T)\right),\ k=0,1,2.
\end{align*}
	Furthermore, there exists $C>0$ such that
	\begin{align*}
		\|v\|_{\mathcal{Z}_T}+\sum_{k=0}^2\sup_{x \in (0,L)}\|\partial_x^kv(\cdot,x)\|_{H^\frac{1-k}{3}(0,T)}\leq C\left(\|h_1\|_{H^\frac{1}{3}(0,T)}+\|h_2\|_{H^\frac{1}{3}(0,T)}+\|h_3\|_{L^2(0,T)}\right).
	\end{align*}
\end{proposition}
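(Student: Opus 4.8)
The plan is to follow the Laplace transform method of Bona, Sun, and Zhang, which is designed precisely for boundary value problems of this type. Since the initial datum vanishes, taking the Laplace transform in time, $\hat{v}(s,x)=\int_0^\infty e^{-st}v(t,x)\,dt$, converts the equation into the third-order ODE $\hat{v}_{xxx}+\hat{v}_x+s\hat{v}=0$ on $(0,L)$, subject to $\hat{v}(s,0)=\hat{h}_1(s)$, $\hat{v}(s,L)=\hat{h}_2(s)$, and $\partial_x\hat{v}(s,L)=\hat{h}_3(s)$. The characteristic equation $\lambda^3+\lambda+s=0$ has three roots $\lambda_1(s),\lambda_2(s),\lambda_3(s)$, so that $\hat{v}(s,x)=\sum_{j=1}^3 c_j(s)e^{\lambda_j(s)x}$; imposing the three boundary conditions yields a $3\times 3$ linear system whose coefficient matrix has a determinant $\Delta(s)$ of Lopatinskii type. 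Since the underlying homogeneous operator $v\mapsto -v_x-v_{xxx}$ with the corresponding homogeneous boundary conditions is dissipative, $\Delta(s)\neq 0$ for $\operatorname{Re}(s)>0$, so Cramer's rule expresses each $c_j(s)$ explicitly in terms of the transformed data and the roots.

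The second step is to invert the Laplace transform along a vertical contour and to recast $v$ as a superposition of boundary forcing operators acting on $h_1,h_2,h_3$. After shifting the contour to the imaginary axis, each such operator becomes a Fourier multiplier in the time variable whose symbol is assembled from $\lambda_j(s)$, $e^{\lambda_j(s)x}$, and $1/\Delta(s)$. The regularity assertions then reduce to $L^2$-type bounds for these multipliers, uniformly in $x\in[0,L]$, together with the identification of the correct Sobolev exponents: the scaling $\tau\sim\xi^3$ of the dispersion relation means each spatial derivative costs exactly $1/3$ of a temporal derivative, which is what converts $h_1,h_2\in H^{1/3}(0,T)$ and $h_3\in L^2(0,T)$ into $\partial_x^k v\in L^\infty_x(0,L;H^{(1-k)/3}(0,T))$ for $k=0,1,2$.

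The crux of the argument, and what I expect to be the main obstacle, is the asymptotic analysis of the roots and of $\Delta(s)$ for large $|s|$. Writing $s=i\rho$ and letting $|\rho|\to\infty$, the roots behave like $\lambda_j(s)\sim\omega^{\,j}(-s)^{1/3}$ with $\omega=e^{2\pi i/3}$, so that exactly one root has large positive real part and one has large negative real part; one must extract the precise growth of $\Delta(s)$ so that each decaying exponential $e^{\lambda_j(s)x}$ is always paired against the correct boundary datum and no spurious growth in $x$ survives. This forces a splitting into frequency regimes: at low frequencies one verifies $\Delta(s)\neq 0$ directly, ruling out eigenvalues and thereby securing uniqueness, while at high frequencies the sharp $1/3$ gain is harvested from the leading-order behaviour of the symbols. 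Uniqueness can alternatively be obtained from the energy identity for the homogeneous problem, and the stated continuity estimate follows by tracking constants through the multiplier bounds. Since the conclusion coincides exactly with the cited results, the full technical details may be imported from \cite{Bona Sun Zhang} and \cite{Capistrano 2017}.
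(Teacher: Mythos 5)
The paper does not prove this proposition; it imports it verbatim from \cite[Propositions 2.7--2.9]{Bona Sun Zhang} and \cite[Proposition 2.2]{Capistrano 2017}, and your sketch accurately reproduces the Laplace-transform/boundary-forcing-operator argument used in those references, including the key points (nonvanishing of the Lopatinskii determinant for $\operatorname{Re}(s)>0$, the root asymptotics $\lambda_j(s)\sim\omega^j(-s)^{1/3}$, and the $1/3$-derivative exchange rate dictated by the dispersion relation). Your approach is therefore essentially the same as the paper's, which simply cites these results.
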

Henceforth we denote by $W_b(t)\vec{h}$ the solution of \eqref{KdV(0,L)-1} corresponding to $\vec{h}=(h_1,h_2,h_3)$. Considering this, \cite[Proposition 2.1]{Bona Sun Zhang} gives us the following proposition.
\begin{proposition} Let $L,T>0$ be given. For any $v_0 \in L^2(0,L)$ there exists a unique solution $v \in \mathcal{Z}_T$ for the problem
\begin{align}\label{KdV(0,L)-2}
\begin{cases}
\partial_{t}v+\partial_{x}v+\partial_{x}^{3}v=0,&t\in(0,T),\ x \in (0,L),\\
v(t,0)=0,\ \ v(t,L)=0,\ \ \partial_{x}v(t,L)=0,&t \in(0,T),\\
v(0,x)=v_0(x),&x \in (0,L),
\end{cases}
\end{align}
which is given by $v=W(\cdot)v_0$ where $\{W(t)\}_{t\geq 0}$ is the $C_0$-semigroup of contractions generated in $L^2(0,L)$ by the operator $A_Lv=-\partial_{x}v-\partial_{x}^{3}v$ with domain
	\begin{align*}
		D(A_L)=\{w\in H^3(0,L);\ v(0)=v(L)=\partial_{x}v(L)=0\}.
	\end{align*}
	Moreover, there exists a constant $C>0$ such that
	\begin{align*}
		\|v\|_{\mathcal{Z}_T}\leq C\|v_0\|_{L^2(0,L)}\quad \text{and}\quad \|\partial_{x}v(\cdot,0)\|_{L^2(0,T)}\leq C\|v_0\|_{L^2(0,L)}.
	\end{align*}
\end{proposition}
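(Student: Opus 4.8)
The plan is to obtain the semigroup from the Lumer--Phillips theorem and then extract the regularity and trace estimates from energy identities, extending to general data by density.

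First I would verify that $A_L$ is m-dissipative. For $v\in D(A_L)$, integrating by parts and using $v(0)=v(L)=\partial_x v(L)=0$ yields the dissipation identity $(A_Lv,v)_{L^2(0,L)}=-\tfrac12\bigl(\partial_x v(0)\bigr)^2\le 0$, so $A_L$ is dissipative. A parallel computation shows that the formal adjoint $A_L^\ast v=\partial_x v+\partial_x^3 v$, with domain $D(A_L^\ast)=\{w\in H^3(0,L):w(0)=w(L)=\partial_x w(0)=0\}$, satisfies $(A_L^\ast v,v)_{L^2(0,L)}=-\tfrac12\bigl(\partial_x v(L)\bigr)^2\le 0$ and is thus dissipative as well. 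To close the argument I would solve, for fixed $f\in L^2(0,L)$, the boundary value problem $v+\partial_x v+\partial_x^3 v=f$ subject to the three boundary conditions; this is a constant-coefficient linear ODE whose solvability and uniqueness follow from elementary ODE theory, giving $\mathrm{Range}(I-A_L)=L^2(0,L)$. By Lumer--Phillips, $A_L$ generates a $C_0$-semigroup of contractions $\{W(t)\}_{t\ge0}$, so $v=W(\cdot)v_0\in C([0,T];L^2(0,L))$ is the unique mild solution, and the contraction property gives directly both uniqueness and $\|v\|_{C([0,T];L^2(0,L))}\le\|v_0\|_{L^2(0,L)}$.

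Next I would establish the hidden regularity for smooth data $v_0\in D(A_L)$, where $v\in C([0,T];D(A_L))\cap C^1([0,T];L^2(0,L))$ justifies all integrations by parts. Integrating the dissipation identity in time gives $\int_0^T\bigl(\partial_x v(t,0)\bigr)^2\,dt=\|v_0\|_{L^2(0,L)}^2-\|v(T)\|_{L^2(0,L)}^2\le\|v_0\|_{L^2(0,L)}^2$, which is exactly the trace estimate $\|\partial_x v(\cdot,0)\|_{L^2(0,T)}\le\|v_0\|_{L^2(0,L)}$. For the interior gain of one derivative I would multiply the equation by $xv$ and integrate over $(0,T)\times(0,L)$; after repeated integration by parts all boundary contributions drop out thanks to $v(0)=v(L)=\partial_x v(L)=0$, leaving the identity $\tfrac32\int_0^T\!\!\int_0^L(\partial_x v)^2=\tfrac12\int_0^L x\,v_0^2-\tfrac12\int_0^L x\,v(T)^2+\tfrac12\int_0^T\!\!\int_0^L v^2$. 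Bounding the right-hand side by $\tfrac12(L+T)\|v_0\|_{L^2(0,L)}^2$, where the last term is controlled by the contraction bound, yields $\|\partial_x v\|_{L^2(0,T;L^2(0,L))}\le C\|v_0\|_{L^2(0,L)}$ and hence $\|v\|_{\mathcal{Z}_T}\le C\|v_0\|_{L^2(0,L)}$.

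Finally, since $D(A_L)$ is dense in $L^2(0,L)$, the linear maps $v_0\mapsto v$ and $v_0\mapsto\partial_x v(\cdot,0)$, bounded from $D(A_L)$ into $\mathcal{Z}_T$ and $L^2(0,T)$ respectively, extend continuously to all of $L^2(0,L)$, and the estimates pass to the limit. The main obstacle is the sharp trace control: the boundary value $\partial_x v(\cdot,0)$ is a priori undefined for merely $L^2$ initial data, so the whole point is that the dissipation identity provides an a priori bound on smooth data which, via density, simultaneously gives meaning to this trace and furnishes the estimate; by comparison the multiplier computation for the $H^1$ gain is routine, but it must be organized so that every boundary term is annihilated by the homogeneous boundary conditions.
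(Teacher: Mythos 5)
Your proof is correct, but it is worth noting that the paper itself does not prove this proposition at all: it is quoted verbatim from \cite[Proposition 2.1]{Bona Sun Zhang}, so there is no internal argument to compare against. Your self-contained route — Lumer--Phillips for the generation of $\{W(t)\}_{t\ge 0}$, the dissipation identity $(A_Lv,v)_{L^2(0,L)}=-\tfrac12\bigl(\partial_x v(0)\bigr)^2$ integrated in time for the trace bound, the multiplier $xv$ for the $L^2(0,T;H^1(0,L))$ gain, and density to pass from $D(A_L)$ to $L^2(0,L)$ — is exactly the standard argument behind the cited result, and all of your identities check out (in particular the coefficient $\tfrac32$ in front of $\int_0^T\!\!\int_0^L(\partial_x v)^2$ and the bound $\tfrac12(L+T)\|v_0\|_{L^2(0,L)}^2$ are right). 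The only step you should not wave away is the range condition $\mathrm{Range}(I-A_L)=L^2(0,L)$: for a third-order boundary value problem, existence is \emph{not} automatic from ``elementary ODE theory'' but follows from uniqueness of the homogeneous problem via the Fredholm alternative for the $3\times 3$ system on a fundamental set of solutions, and that uniqueness is itself supplied by the energy identity $\|v\|_{L^2(0,L)}^2+\tfrac12\bigl(\partial_x v(0)\bigr)^2=(f,v)_{L^2(0,L)}$ with $f=0$. Alternatively, since you have already checked that both $A_L$ and $A_L^\ast$ are densely defined, closed and dissipative, you can invoke the corollary of Lumer--Phillips used elsewhere in this paper (in the proof of Proposition 2.2) and skip the ODE solvability altogether. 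What your approach buys over the paper's bare citation is a transparent derivation in which the constant $C$ and the role of each boundary condition are explicit; what it does not give is the sharp Kato smoothing for all three traces $\partial_x^k v$, $k=0,1,2$, which the paper obtains separately from \cite{Krammer Rivas Zhang} and \cite{Bona Sun Zhang} in the subsequent propositions — but that is not part of the statement you were asked to prove.
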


We will show that the solution $v=W(\cdot)v_0$ of \eqref{KdV(0,L)-2} possesses the Kato smoothing property. According to \cite{Krammer Rivas Zhang}, the linear KdV equation
\begin{align*}
\begin{cases}
		\partial_{t}z+\partial_{x}z+\partial_{x}^{3}z=0,&\ t \in \mathbb{R}_+,\ x \in \mathbb{R}\\
		z(0,x)=z_0(x)
\end{cases}
\end{align*}
has a unique solution, given by
\begin{align*}
	z(t,x)=(W_\mathbb{R}z_0)(x):=c\int_\mathbb{R}e^{i(\xi^3-\xi)t}e^{ix\xi}\hat{z_0}(\xi)d\xi
\end{align*}
where $c \in \mathbb{R}$ and $\Hat{z_0}$ denotes the Fourier transform of $z_0$. The result can be read below. 
\begin{proposition}\cite[Lemma 2.9]{Krammer Rivas Zhang}\ There exists a constant $C>0$ such that, for every $z_0 \in L^2(\mathbb{R})$,
	\begin{align*}
		\sum_{k=0}^{2}\sup_{x \in \mathbb{R}}\|\partial_x^kW_\mathbb{R}(\cdot)v_0(x)\|_{H_t^\frac{1-k}{3}(\mathbb{R})}\leq C\|v_0\|_{L^2(\mathbb{R})}.
	\end{align*}
\end{proposition}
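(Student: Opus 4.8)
\noindent The plan is to prove this sharp Kato-smoothing estimate directly on the Fourier side, exploiting that $W_\mathbb{R}(t)$ is an explicit Fourier multiplier. Writing $\phi(\xi)=\xi^3-\xi$ for the dispersion relation, we have $\partial_x^k W_\mathbb{R}(\cdot)v_0(x)=c\int_\mathbb{R}(i\xi)^k e^{ix\xi}\widehat{v_0}(\xi)\,e^{i\phi(\xi)t}\,d\xi$, which for each fixed $x$ is a function of $t$ whose temporal frequency content is carried by the characteristic curve $\tau=\phi(\xi)$. Since $|e^{ix\xi}|=1$, every bound produced this way is automatically uniform in $x$, which is exactly what the $\sup_{x}$ demands; so I would fix $x$ once and for all and estimate the $H_t^{(1-k)/3}$ norm of the resulting function of $t$.

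First I would compute the temporal Fourier transform. Splitting $\mathbb{R}$ into the intervals of monotonicity of $\phi$ (delimited by the critical points $\xi=\pm 1/\sqrt{3}$ of $\phi$) and substituting $\tau=\phi(\xi)$ on each monotone branch turns $\partial_x^k W_\mathbb{R}(\cdot)v_0(x)$ into an inverse Fourier transform in $\tau$, with Jacobian $1/|\phi'(\xi)|$, where $\phi'(\xi)=3\xi^2-1$. By Plancherel in $t$ and changing variables back to $\xi$, the square of the $H_t^{(1-k)/3}$ norm is, up to the cross terms coming from the at most three branches, controlled by $\int_\mathbb{R} m_k(\xi)\,|\widehat{v_0}(\xi)|^2\,d\xi$ with multiplier $m_k(\xi)=(1+\phi(\xi)^2)^{(1-k)/3}\,|\xi|^{2k}/|\phi'(\xi)|$. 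The whole estimate thus reduces to the boundedness of $m_k$ together with a control of the cross terms.

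The heart of the matter, and the reason the exponent $(1-k)/3$ is sharp, is the high-frequency regime: as $|\xi|\to\infty$ one has $|\phi(\xi)|\sim|\xi|^3$ and $|\phi'(\xi)|\sim|\xi|^2$, so $m_k(\xi)\sim|\xi|^{\,2(1-k)+2k-2}=|\xi|^{0}$; that is, the gain of one derivative ($|\xi|^{2k}/|\phi'(\xi)|\sim|\xi|^{2k-2}$) is compensated exactly by the time-regularity weight $(1+\phi^2)^{(1-k)/3}\sim|\xi|^{2(1-k)}$. The \emph{main obstacle} I anticipate is instead the opposite regime, near the degenerate frequencies $\xi=\pm 1/\sqrt{3}$, where $\phi'$ vanishes and $1/|\phi'(\xi)|$ is singular, so that the naive branch-by-branch majorization breaks down because two branches coalesce into a fold. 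To handle this I would keep the two colliding branches together and exploit the nondegeneracy $\phi''(\pm 1/\sqrt{3})\neq 0$, estimating the associated oscillatory integral by a stationary-phase / van der Corput argument (localized in $t$, consistently with the finite-time trace estimates these smoothing bounds are used to derive) rather than by the pointwise multiplier bound. Once the fold contribution is controlled and the finitely many branches are summed, taking the supremum over $x$ and adding the cases $k=0,1,2$ yields the stated inequality.
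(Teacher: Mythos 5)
The paper offers no proof of this statement: it is quoted verbatim (up to the $z_0$/$v_0$ typo) from Kramer--Rivas--Zhang, so the only thing to compare against is the cited lemma, whose proof is indeed the Fourier-restriction-to-the-characteristic-curve argument you sketch. Your reduction is set up correctly: the uniformity in $x$ is free because $|e^{ix\xi}|=1$, the change of variables $\tau=\phi(\xi)$ on each monotone branch produces exactly the multiplier $m_k(\xi)=(1+\phi(\xi)^2)^{(1-k)/3}|\xi|^{2k}/|\phi'(\xi)|$, the high-frequency cancellation $|\xi|^{2(1-k)}\cdot|\xi|^{2k-2}=|\xi|^0$ is the right accounting for why $(1-k)/3$ is sharp, and the cross terms between the three branches are harmless. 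You have also correctly located the only genuine difficulty at the stationary points $\xi=\pm 1/\sqrt{3}$ of the group velocity.

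The gap is in how you propose to dispose of those points. No stationary-phase or van der Corput refinement can rescue the inequality there, because with the norm taken over all of $\mathbb{R}$ in $t$ (as the statement is transcribed here) the estimate is simply false near a fold: take $\widehat{v_0}=\varepsilon^{-1/2}\chi_{[\xi_0,\xi_0+\varepsilon]}$ with $\xi_0=1/\sqrt{3}$; since $\phi'(\xi_0)=0$, the phase $t(\phi(\xi)-\phi(\xi_0))$ stays $O(1)$ for $|t|\lesssim \varepsilon^{-2}$, so $|\partial_xW_{\mathbb{R}}(t)v_0(0)|\gtrsim\varepsilon^{1/2}$ on that whole time interval and $\|\partial_xW_{\mathbb{R}}(\cdot)v_0(0)\|_{L^2_t(\mathbb{R})}^2\gtrsim\varepsilon^{-1}\to\infty$ while $\|v_0\|_{L^2}\simeq 1$. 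The cited Lemma 2.9 carries a compactly supported cutoff $\psi(t)$ (equivalently, the estimate is used only on $(0,T)$, which is all the paper needs for Proposition \ref{trace-KdV-(0,L)2} and its successors); your parenthetical ``localized in $t$'' gestures at this but does not confront the fact that the localization is not a convenience of the proof, it is part of the statement. Once the cutoff is in place, the fold region needs none of the oscillatory-integral machinery you invoke: the contribution of $\{|\xi\mp 1/\sqrt{3}|<\delta\}$ is, by Cauchy--Schwarz, bounded in $L^\infty_t$ by $C_\delta\|v_0\|_{L^2}$, its $t$-derivative is bounded the same way because $\phi$ is bounded on that compact frequency set, and a compactly supported $W^{1,\infty}$ function lies in $H^1_t\subset H^{(1-k)/3}_t$. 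I would therefore restate the proposition with the cutoff (or over $(0,T)$), replace the van der Corput step by this elementary argument, and keep the rest of your proof as is.
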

Additionally, \cite{Bona Sun Zhang} showed the following:
\begin{proposition}\cite[Proposition 2.13]{Bona Sun Zhang}\label{trace-KdV-(0,L)2} Let $L,T>0$ be given. There exists a constant $C>0$ such that, for every $v_0 \in L^2(0,L)$, the solution $v=W(\cdot)v_0$ of \eqref{KdV(0,L)-2} satisfies
	\begin{align*}
		\sum_{k=0}^{2}\sup_{x \in (0,L)}\|\partial_x^kv(\cdot,x)\|_{H^\frac{1-k}{3}(0,T)}\leq C\|v_0\|_{L^2(0,L)}.
	\end{align*}
\end{proposition}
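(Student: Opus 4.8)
The plan is to reduce the initial-value problem \eqref{KdV(0,L)-2} to the boundary-value problem \eqref{KdV(0,L)-1}, for which the sharp Kato smoothing property is already available from Proposition \ref{W.P.KdV(0,L)-1}, by subtracting off the solution of the Cauchy problem posed on the whole line. The key point is that the whole-line smoothing estimate for $W_\mathbb{R}$ controls exactly the traces that appear as admissible boundary data in \eqref{KdV(0,L)-1}.

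First I would extend the datum by zero, setting $\tilde v_0 = v_0$ on $(0,L)$ and $\tilde v_0 = 0$ on $\mathbb{R}\setminus(0,L)$, so that $\tilde v_0 \in L^2(\mathbb{R})$ with $\|\tilde v_0\|_{L^2(\mathbb{R})}=\|v_0\|_{L^2(0,L)}$. Let $w=W_\mathbb{R}(\cdot)\tilde v_0$ be the corresponding solution of the free KdV equation on the line. By the sharp smoothing estimate for $W_\mathbb{R}$ stated above (\cite[Lemma 2.9]{Krammer Rivas Zhang}), together with the boundedness of the restriction map $H^s(\mathbb{R})\to H^s(0,T)$ in the time variable, the spatial traces of $w$ satisfy
\begin{align*}
\sum_{k=0}^{2}\sup_{x\in(0,L)}\|\partial_x^k w(\cdot,x)\|_{H^{\frac{1-k}{3}}(0,T)}\leq C\|v_0\|_{L^2(0,L)};
\end{align*}
in particular $w(\cdot,0),\,w(\cdot,L)\in H^{\frac13}(0,T)$ and $\partial_x w(\cdot,L)\in L^2(0,T)$, each with norm controlled by $\|v_0\|_{L^2(0,L)}$.

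Next I would set $u = v - w|_{[0,T]\times[0,L]}$. Since both $v$ and $w$ solve the linear KdV equation and $\tilde v_0 = v_0$ on $(0,L)$, the function $u$ has vanishing initial data and solves \eqref{KdV(0,L)-1} with boundary data $\vec h=\big(-w(\cdot,0),\,-w(\cdot,L),\,-\partial_x w(\cdot,L)\big)$, which lies in $H^{\frac13}(0,T)\times H^{\frac13}(0,T)\times L^2(0,T)$ by the previous step. Proposition \ref{W.P.KdV(0,L)-1} then yields
\begin{align*}
\sum_{k=0}^{2}\sup_{x\in(0,L)}\|\partial_x^k u(\cdot,x)\|_{H^{\frac{1-k}{3}}(0,T)}\leq C\big(\|w(\cdot,0)\|_{H^{\frac13}(0,T)}+\|w(\cdot,L)\|_{H^{\frac13}(0,T)}+\|\partial_x w(\cdot,L)\|_{L^2(0,T)}\big)\leq C\|v_0\|_{L^2(0,L)}.
\end{align*}
Writing $v = u + w|_{[0,T]\times[0,L]}$ and adding the two displayed estimates via the triangle inequality closes the argument.

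The only genuinely delicate point is verifying that the traces of the whole-line solution $w$ land in the precise Sobolev spaces demanded as boundary data in Proposition \ref{W.P.KdV(0,L)-1}; this is exactly the content of the sharp smoothing estimate for $W_\mathbb{R}$, so once that estimate is invoked the matching of the regularity exponents $H^{(1-k)/3}$ is automatic. The remaining ingredients — extension by zero, the subtraction that transfers an initial-data problem into a boundary-data problem, and the final triangle inequality — are routine, and no corner compatibility conditions are required since the whole analysis is carried out at the level of the sharp trace spaces rather than for classical solutions.
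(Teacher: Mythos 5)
Your argument is correct and is essentially the standard decomposition underlying the cited result of Bona--Sun--Zhang, which the paper itself only quotes: extend $v_0$ by zero, control the traces of the whole-line solution via the sharp smoothing estimate for $W_\mathbb{R}$, and absorb the resulting boundary values into the boundary-forced problem \eqref{KdV(0,L)-1} handled by Proposition \ref{W.P.KdV(0,L)-1}. This is exactly the route the appendix sets up (introducing $W_\mathbb{R}$ and the Kramer--Rivas--Zhang trace lemma immediately beforehand), so nothing further is needed.
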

%\begin{proof}
%	Given $v_0 \in L^2(0,L)$ define $v_0^*\in L^2(\mathbb{R})$ by
%	\begin{align*}
%		v_0^*(x)=\left\{
%		\begin{array}{cl}
%			v_0(x),&\ x \in (0,L),\\
%			0,&x \in \mathbb{R}\backslash(0,L).
%		\end{array}
%		\right.
%	\end{align*}
%	Define $z(t,\cdot)=W_\mathbb{R}(t)v_0^*$, $g_1(t)=z(t,0),\ g_2(t)=z(t,L)$ and $g_3(t)=\partial_{x}z(t,L)$. From Proposition \ref{traces-W_R} we have $\vec{g}=(g_1,g_2,g_3)\in H^\frac{1}{3}(0,T)\times H^\frac{1}{3}(0,T)\times L^2(0,T)$. Noting that
%	\begin{align*}
%		v(t,\cdot)=W_\mathbb{R}(t)v_0^*-W_b(t)\vec{g}\ \text{ in }(0,L),\ \forall t\in(0,T)
%	\end{align*}
%	the result it follows that from Propositions \ref{W.P.KdV(0,L)-1} and \ref{traces-W_R}.
%\end{proof}
Combining the previous propositions, precisely, Propositions \ref{W.P.KdV(0,L)-1} and \ref{trace-KdV-(0,L)2}, and \cite[Proposition 2.6]{Capistrano 2017}, the next results are verified.
\begin{proposition}
	Consider $L,T>0$. For every $v_0 \in L^2(0,L)$ and $(h_1,h_2,h_3)\in \mathcal{H}_T:=H^\frac{1}{3}(0,T)\times H^\frac{1}{3}(0,T)\times L^2(0,T)$ there exists a unique solution $v \in \mathcal{Z}_T:=C([0,T];L^2(0,L))\cap L^2(0,L;H^1(0,L))$ to the problem
	\begin{align*}
		\begin{cases}
			\partial_{t}v+\partial_{x}v+\partial_{x}^{3}v=0,&t\in(0,T),\ x \in (0,L),\\
			v(t,0)=h_1(t),\ \ v(t,L)=h_2(t),\ \ \partial_{x}v(t,L)=h_3(t),&t \in(0,T),\\
			v(0,x)=v_0(x),&x \in (0,L),
		\end{cases}
	\end{align*}
	which possesses the sharp Kato smoothing property
	\begin{align*}
		\partial_x^kv\in L_x^\infty\left(0,L;H^{\frac{1-k}{3}}(0,T)\right),\ k=0,1,2.
	\end{align*}
	Furthermore, there exists $C>0$ such that
	\begin{align*}
		\|v\|_{\mathcal{Z}_T}+\sum_{k=0}^2\sup_{x \in (0,L)}\|\partial_x^kv(\cdot,x)\|_{H^\frac{1-k}{3}(0,T)}\leq C \left(\|v_0\|_{L^2(0,L)}+\|h\|_{\mathcal{H}_T}\right).
	\end{align*}
\end{proposition}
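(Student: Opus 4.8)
The plan is to exploit the linearity of the KdV equation and reduce the statement to the two special cases already established, combining them by superposition. Specifically, I would write the solution as $v = v_1 + v_2$, where $v_1$ carries the initial datum $v_0$ with homogeneous boundary data and $v_2$ carries the boundary triple $\vec{h} = (h_1, h_2, h_3)$ with vanishing initial datum. Concretely, let $v_1 = W(\cdot) v_0$ be the solution of the initial value problem \eqref{KdV(0,L)-2}, and let $v_2 = W_b(\cdot)\vec{h}$ be the solution of the boundary value problem \eqref{KdV(0,L)-1}. By construction $v := v_1 + v_2$ belongs to $\mathcal{Z}_T$ and, since the equation together with the initial and boundary conditions are all linear, $v$ satisfies $\partial_t v + \partial_x v + \partial_x^3 v = 0$, $v(0,\cdot)=v_0$, $v(\cdot,0)=h_1$, $v(\cdot,L)=h_2$, $\partial_x v(\cdot,L)=h_3$; that is, $v$ solves the full inhomogeneous problem.

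For the regularity and the estimate, I would treat each summand with the corresponding result already proved. For $v_1$, the preceding proposition gives $v_1 \in \mathcal{Z}_T$ with $\|v_1\|_{\mathcal{Z}_T} \le C\|v_0\|_{L^2(0,L)}$, while Proposition \ref{trace-KdV-(0,L)2} yields the sharp Kato smoothing $\partial_x^k v_1 \in L_x^\infty(0,L;H^{(1-k)/3}(0,T))$ for $k=0,1,2$ together with $\sum_{k=0}^2 \sup_{x}\|\partial_x^k v_1(\cdot,x)\|_{H^{(1-k)/3}(0,T)} \le C\|v_0\|_{L^2(0,L)}$. For $v_2$, Proposition \ref{W.P.KdV(0,L)-1} directly supplies $v_2 \in \mathcal{Z}_T$, the same Kato smoothing property, and the bound $\|v_2\|_{\mathcal{Z}_T} + \sum_{k=0}^2 \sup_x \|\partial_x^k v_2(\cdot,x)\|_{H^{(1-k)/3}(0,T)} \le C\|\vec{h}\|_{\mathcal{H}_T}$.

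Adding these two estimates and applying the triangle inequality in $\mathcal{Z}_T$ and in each space $H^{(1-k)/3}(0,T)$ then produces the claimed combined bound, with $\|v_0\|_{L^2(0,L)} + \|h\|_{\mathcal{H}_T}$ on the right-hand side. Uniqueness follows once more from linearity: if $v$ and $\tilde{v}$ are two solutions in $\mathcal{Z}_T$, their difference $w = v - \tilde{v}$ solves the homogeneous problem \eqref{KdV(0,L)-2} with $v_0 = 0$, whence $w = W(\cdot)0 = 0$ by the uniqueness already asserted there. I do not expect a genuine obstacle here: the only substantive content, namely the sharp Kato smoothing up to the second spatial derivative, is entirely absorbed into Propositions \ref{W.P.KdV(0,L)-1} and \ref{trace-KdV-(0,L)2}, so the present statement reduces to the decomposition and a clean bookkeeping of the constants.
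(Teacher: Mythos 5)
Your proposal is correct and coincides with the paper's own argument: the paper proves this proposition precisely by superposing the solution $W(\cdot)v_0$ of the homogeneous-boundary problem with the solution $W_b(\cdot)\vec{h}$ of the zero-initial-data problem, and then invoking Propositions \ref{W.P.KdV(0,L)-1} and \ref{trace-KdV-(0,L)2} (together with a cited result from \cite{Capistrano 2017}) to obtain the Kato smoothing and the combined estimate. Your added details on linearity and on uniqueness via the homogeneous problem are exactly the bookkeeping the paper leaves implicit.
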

\begin{corollary}\label{W.P.KdV(0,L)-4}
	Let $T,L>0$ be given. Given $v^T \in L^2(0,L)$ and $h \in H^{\frac{1}{3}}(0,T)$ there exists a unique solution $v \in \mathcal{Z}_T$ for the problem
	\begin{align*}
		\begin{cases}
			\partial_{t}v+\partial_{x}v+\partial_{x}^{3}v=0,&t\in(0,T),\ x \in (0,L),\\
			v(t,0)=h(t),\ \ v(t,L)=\partial_{x}v(t,0)=0,&t \in(0,T),\\
			v(T,x)=v^T(x),&x \in (0,L)
		\end{cases}
	\end{align*}
	which satisfies
	\begin{align*}
		\partial_x^kv\in L_x^\infty\left(0,L;H^{\frac{1-k}{3}}(0,T)\right),\ k=0,1,2.
	\end{align*}
	and
	\begin{align*}
		\|v\|_{\mathcal{Z}_T}+\sum_{k=0}^2\sup_{x \in (0,L)}\|\partial_x^kv(\cdot,x)\|_{H^\frac{1-k}{3}(0,T)}\leq C \left(\|v^T\|_{L^2(0,L)}+\|h\|_{H^\frac{1}{3}(0,T)}\right),
	\end{align*}
	for some constant $C>0$.
\end{corollary}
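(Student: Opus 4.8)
The plan is to reduce the stated problem, by a reflection in both time and space, to the initial/boundary value problem treated in the preceding Proposition (which itself rests on Propositions \ref{W.P.KdV(0,L)-1} and \ref{trace-KdV-(0,L)2}). Concretely, I would set $w(t,x):=v(T-t,L-x)$ and check that $w$ solves the same forward equation: since $\partial_t w=-(\partial_t v)(T-t,L-x)$, $\partial_x w=-(\partial_x v)(T-t,L-x)$ and $\partial_x^3 w=-(\partial_x^3 v)(T-t,L-x)$, one gets $\partial_t w+\partial_x w+\partial_x^3 w=-(\partial_t v+\partial_x v+\partial_x^3 v)(T-t,L-x)=0$. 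The double reflection is exactly what is needed: the time reversal converts the terminal condition into an initial one, while the space reversal interchanges the roles of the two endpoints, turning the two conditions prescribed at $x=0$ in the corollary into two conditions at $x=L$, which is the configuration handled by the preceding Proposition.

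Next I would translate the data. The terminal datum becomes the initial datum $w(0,x)=v(T,L-x)=v^T(L-x)=:w_0(x)\in L^2(0,L)$. For the lateral traces one computes $w(t,0)=v(T-t,L)=0$, $w(t,L)=v(T-t,0)=h(T-t)=:\tilde h(t)$, and $\partial_x w(t,L)=-(\partial_x v)(T-t,0)=0$. Hence $w$ solves the problem of the preceding Proposition with boundary data $(h_1,h_2,h_3)=(0,\tilde h,0)\in \mathcal{H}_T$ and initial datum $w_0\in L^2(0,L)$, where $\tilde h\in H^{\frac{1}{3}}(0,T)$ because the time reversal $t\mapsto T-t$ is an isometry of $H^{\frac{1}{3}}(0,T)$. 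Applying that Proposition yields a unique $w\in \mathcal{Z}_T$ enjoying the sharp Kato smoothing property together with the estimate controlling $\|w\|_{\mathcal{Z}_T}+\sum_{k=0}^2\sup_{x\in(0,L)}\|\partial_x^k w(\cdot,x)\|_{H^{\frac{1-k}{3}}(0,T)}$ by $\|w_0\|_{L^2(0,L)}+\|\tilde h\|_{H^{\frac{1}{3}}(0,T)}$.

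Finally I would invert the change of variables, setting $v(t,x)=w(T-t,L-x)$, and transfer every conclusion. Uniqueness passes through because the map $v\mapsto w$ is a bijection, so a second solution of the corollary's problem would produce a second solution of the transformed one. For the estimate I would use the invariances $\|w_0\|_{L^2(0,L)}=\|v^T\|_{L^2(0,L)}$ and $\|\tilde h\|_{H^{\frac{1}{3}}(0,T)}=\|h\|_{H^{\frac{1}{3}}(0,T)}$, together with $\|v\|_{\mathcal{Z}_T}=\|w\|_{\mathcal{Z}_T}$. The step demanding the most care — and the one I regard as the main obstacle — is verifying that the sharp Kato smoothing seminorms are invariant under the reflection: from $\partial_x^k v(\cdot,x)=(-1)^k(\partial_x^k w)(T-\cdot,L-x)$ one must observe that taking $\sup_{x\in(0,L)}$ is unaffected by $x\mapsto L-x$, that the factor $(-1)^k$ does not change the norm, and that the $H^{\frac{1-k}{3}}(0,T)$ norm is preserved by the time reversal $t\mapsto T-t$. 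Only after this bookkeeping does the membership $\partial_x^k v\in L_x^\infty\left(0,L;H^{\frac{1-k}{3}}(0,T)\right)$ and the claimed estimate follow for $v$ with the same constant $C$.
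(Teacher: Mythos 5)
Your proposal is correct and is essentially the argument the paper intends: the corollary is stated in Appendix~\ref{trace-review} as a direct consequence of the preceding forward-in-time Proposition, and the reflection $w(t,x)=v(T-t,L-x)$ with data $(h_1,h_2,h_3)=(0,\tilde h,0)$, $w_0(x)=v^T(L-x)$ is precisely the reduction that makes this work. Your bookkeeping of the sign changes, the invariance of the $H^{\frac{1-k}{3}}(0,T)$ and $\mathcal{Z}_T$ norms under the double reflection, and the transfer of uniqueness are all sound.
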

%\begin{proof}
%	Let $w\in \mathcal{Z}_T$ the solution of \eqref{KdV(0,L)-3} with data $h_1=h$, $h_2=h_3=0$ and $w(0,x)=v^T$. Defining $v(t,x)=w(T-t,L-x)$ we see that $v$ is solution of \eqref{KdV(0,L)-4}. Using Proposition \ref{W.P.KdV(0,L)3}, we get the traces properties.
%\end{proof}
\begin{proposition}\label{W,P.Kdv(0,L)-5}
	Given $T,L>0$, $v^T \in L^2(0,L)$ and $h \in H^{-\frac{1}{3}}(0,T)$ there exists a unique solution $v \in \mathcal{Z}_T$ of the problem
	\begin{align*}
		\begin{cases}
			\partial_{t}v+\partial_{x}v+\partial_{x}^{3}v=0,&t\in(0,T),\ x \in (0,L),\\
			v(t,L)=\partial_{x}v(t,0)=0,\ \ \partial_{x}^{2}v(t,0)=h(t),&t \in(0,T),\\
			v(T,x)=v^T(x),&x \in (0,L)
		\end{cases}
	\end{align*}
	which satisfies
	\begin{align*}
		\partial_x^kv\in L_x^\infty\left(0,L;H^{\frac{1-k}{3}}(0,T)\right),\ k=0,1,2.
	\end{align*}
	and
	\begin{align*}
		\|v\|_{\mathcal{Z}_T}+\sum_{k=0}^2\sup_{x \in (0,L)}\|\partial_x^kv(\cdot,x)\|_{H^\frac{1-k}{3}(0,T)}\leq C \left(\|v^T\|_{L^2(0,L)}+\|h\|_{H^{-\frac{1}{3}}(0,T)}\right).
	\end{align*}
	for some positive constant $C$.
\end{proposition}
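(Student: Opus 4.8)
The plan is to treat Proposition~\ref{W,P.Kdv(0,L)-5} as the transposition (dual) counterpart of Corollary~\ref{W.P.KdV(0,L)-4}. The datum $h$ prescribes the trace $\partial_{x}^{2}v(\cdot,0)$ and lives in the negative-order space $H^{-\frac13}(0,T)$, which is exactly the dual of the space $H^{\frac13}(0,T)$ in which the sharp Kato smoothing places the boundary trace at $x=0$ of solutions of the direct problem. I would therefore \emph{define} the solution by duality rather than construct it directly. First I would establish the a priori identity obtained by pairing the equation with a test function: if $u$ solves the forward ``dual'' problem $\partial_t u+\partial_x u+\partial_x^3 u=0$ on $(0,T)\times(0,L)$ with source $f$, zero initial datum $u(0,\cdot)=0$, and the homogeneous boundary conditions adjoint to ours, then multiplying the equation for $v$ by $u$, integrating by parts over $(0,T)\times(0,L)$, and using $v(t,L)=\partial_x v(t,0)=0$ together with the adjoint conditions on $u$, one is left with the single identity
\[
\int_0^T\!\!\int_0^L v\,f\,dx\,dt=\langle v^T,u(T,\cdot)\rangle_{L^2(0,L)}-\big\langle h,\,u(\cdot,0)\big\rangle_{H^{-\frac13}(0,T),\,H^{\frac13}(0,T)},
\]
all other boundary contributions cancelling.

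The key estimate making this functional bounded comes from the results already available for the direct problem. By Propositions~\ref{W.P.KdV(0,L)-1} and~\ref{trace-KdV-(0,L)2} together with \cite[Proposition 2.6]{Capistrano 2017}, the dual solution $u$ belongs to $\mathcal{Z}_T$ and, crucially, enjoys the sharp Kato smoothing $u(\cdot,0)\in H^{\frac13}(0,T)$ with $\|u(\cdot,0)\|_{H^{\frac13}(0,T)}\le C\|f\|_{L^1(0,T;L^2(0,L))}$, while well-posedness controls $\|u(T,\cdot)\|_{L^2(0,L)}\le C\|f\|_{L^1(0,T;L^2(0,L))}$. Consequently the right-hand side above defines a bounded linear functional of $f$, and the Riesz representation (after extension by density from smooth $f$) produces a unique $v\in C([0,T];L^2(0,L))$ with $\|v\|_{C([0,T];L^2(0,L))}\le C\big(\|v^T\|_{L^2(0,L)}+\|h\|_{H^{-\frac13}(0,T)}\big)$. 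This single transposition handles the final datum $v^T$ and the boundary forcing $h$ simultaneously, so no separate treatment of the homogeneous piece is needed.

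It then remains to upgrade this weak solution to $\mathcal{Z}_T$ and to recover the full Kato smoothing $\partial_x^k v\in L_x^\infty(0,L;H^{\frac{1-k}{3}}(0,T))$ for $k=0,1,2$. I would do this exactly as in the paragraph preceding Corollary~\ref{W.P.KdV(0,L)-4}: split off the contribution of $v^T$ (which is a semigroup solution after the reflection $x\mapsto L-x$ and time reversal, and whose smoothing is covered by Proposition~\ref{trace-KdV-(0,L)2}) from the boundary-forcing contribution, and for the latter invoke the same combination of Propositions~\ref{W.P.KdV(0,L)-1} and~\ref{trace-KdV-(0,L)2} with \cite[Proposition 2.6]{Capistrano 2017}. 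The only new feature relative to those statements is that $h$ carries one fewer derivative, a deficit that is absorbed precisely by the $\tfrac13$-gain on the dual trace. Uniqueness is immediate: a solution with $v^T=0$ and $h=0$ satisfies the homogeneous problem and vanishes by the standard energy/multiplier estimate.

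The main obstacle is the boundary-term bookkeeping in the integration by parts. One must identify the correct adjoint boundary conditions for $u$ — there are three conditions at $x=0$ and one at $x=L$ to balance — so that every boundary contribution cancels except the term $\int_0^T h(t)\,u(t,0)\,dt$, and then verify that this surviving term is genuinely the $H^{-\frac13}/H^{\frac13}$ duality pairing placing $u(\cdot,0)$ in exactly the space furnished by the Kato smoothing. Getting these regularity indices to match, so that the estimate closes, is the delicate point; once the pairing is correctly identified, the remainder follows from the already-established trace theory for the single KdV equation.
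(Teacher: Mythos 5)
The paper itself offers no proof of Proposition~\ref{W,P.Kdv(0,L)-5}: it appears in the review appendix as one of the results ``verified'' by combining Propositions~\ref{W.P.KdV(0,L)-1} and~\ref{trace-KdV-(0,L)2} with \cite[Proposition 2.6]{Capistrano 2017}, i.e.\ it is quoted from the single-equation literature (indeed, the reflection $x\mapsto L-x$, $t\mapsto T-t$ turns the statement into a forward problem with Dirichlet data at the left and first/second-derivative data in $H^{-\frac13}(0,T)$ at the right, which is the configuration those references treat). Your transposition scheme is therefore a genuinely different, self-contained route, and it is the right \emph{kind} of argument for boundary data of negative order; but as written it has a concrete gap at precisely the point you flag as delicate and then defer.

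Carrying out the integration by parts, the boundary terms surviving after $v(t,L)=\partial_xv(t,0)=0$ are
\begin{align*}
-\int_0^T h(t)\,u(t,0)\,dt\;-\;\int_0^T v(t,0)\bigl[u(t,0)+\partial_x^2u(t,0)\bigr]\,dt\;+\;\int_0^T\bigl[\partial_x^2v(t,L)\,u(t,L)-\partial_xv(t,L)\,\partial_xu(t,L)\bigr]\,dt,
\end{align*}
where the middle bracket combines the transport term's contribution $-v(t,0)u(t,0)$ with the third-order one. Hence the dual problem is forced to carry $u(t,L)=\partial_xu(t,L)=0$ together with the coupled Robin-type condition $u(t,0)+\partial_x^2u(t,0)=0$: imposing $u(t,0)=0$ would annihilate the $h$-pairing, while imposing $u(t,0)=\partial_x^2u(t,0)=0$ separately over-determines the third-order problem. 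The ``key estimate'' your functional requires --- well-posedness of \emph{this} dual problem in $\mathcal{Z}_T$ and the sharp trace bound $\|u(\cdot,0)\|_{H^{1/3}(0,T)}\leq C\|f\|$ --- is not contained in Proposition~\ref{W.P.KdV(0,L)-1} or Proposition~\ref{trace-KdV-(0,L)2}, both of which concern the configuration $v(t,0),\,v(t,L),\,\partial_xv(t,L)$, nor in Corollary~\ref{W.P.KdV(0,L)-4}; proving it is essentially of the same difficulty as the proposition itself, so the argument is circular as it stands. The remaining steps (Riesz representation, splitting off the $v^T$ part, uniqueness via the energy identity) are fine once that estimate is supplied, but the proof does not close without it.
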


We finish this appendix with an auxiliary result about the KdV operator.

\begin{lemma}\label{Ay in H^-2}
For any $L>0$, if $y \in H^1(0,L)$ then $-\partial_{x}y-\partial_{x}^{3}y\in H^{-2}(0,L)$ with
	\begin{align*}
		\|-\partial_{x}y-\partial_{x}^{3}y\|_{H^{-2}(0,L)}\leq \|y\|_{H^1(0,L)}.
	\end{align*}
\end{lemma}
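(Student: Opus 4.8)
The plan is to exploit the duality characterization $H^{-2}(0,L) = (H^2_0(0,L))'$, equipped with the dual norm relative to the $L^2$ pivot, so that proving the estimate reduces to controlling the pairing $\langle -\partial_x y - \partial_x^3 y, \phi\rangle$ uniformly over test functions $\phi \in H^2_0(0,L)$ with $\|\phi\|_{H^2(0,L)} \le 1$. First I would fix $\phi \in C_c^\infty(0,L)$ (dense in $H^2_0(0,L)$) together with a smooth $y$, and integrate by parts to transfer the derivatives off $y$ and onto $\phi$. Since $y$ is only $H^1$ and the target space is $H^{-2}$, the crucial point is to integrate by parts \emph{exactly} the right number of times: once for the $\partial_x y$ term and twice for the $\partial_x^3 y$ term, landing $y$ against $\partial_x\phi$ and $\partial_x y$ against $\partial_x^2\phi$. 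This yields the identity
\begin{equation*}
\langle -\partial_x y - \partial_x^3 y, \phi\rangle = \int_0^L y\,\partial_x\phi - \int_0^L \partial_x y\,\partial_x^2\phi,
\end{equation*}
in which only $\|y\|_{L^2}$, $\|\partial_x y\|_{L^2}$, and derivatives of $\phi$ up to order two appear.

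The boundary terms produced by these integrations by parts all vanish because $\phi \in H^2_0(0,L)$ forces $\phi(0)=\phi(L)=\partial_x\phi(0)=\partial_x\phi(L)=0$. I would stress that one must \emph{not} perform a third integration by parts, since that would require either $\partial_x^2\phi$ to vanish at the endpoints (false in general for $H^2_0$) or $\partial_x^3\phi \in L^2$ (unavailable, as $\phi$ is only $H^2$). With the identity established, the estimate follows from a grouped Cauchy--Schwarz argument: bound each integral by a product of $L^2$ norms, then apply the discrete Cauchy--Schwarz inequality in $\mathbb{R}^2$ to obtain
\begin{equation*}
|\langle -\partial_x y - \partial_x^3 y, \phi\rangle| \le \left(\|y\|_{L^2}^2 + \|\partial_x y\|_{L^2}^2\right)^{1/2}\left(\|\partial_x\phi\|_{L^2}^2 + \|\partial_x^2\phi\|_{L^2}^2\right)^{1/2}.
\end{equation*}
The first factor is precisely $\|y\|_{H^1(0,L)}$, while the second is bounded by $\|\phi\|_{H^2(0,L)} \le 1$, which delivers the sharp constant $1$ asserted in the statement. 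Taking the supremum over admissible $\phi$ gives the inequality for smooth $y$.

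Finally I would pass to general $y \in H^1(0,L)$ by density: the map $y \mapsto -\partial_x y - \partial_x^3 y$ is linear and, by the bound just proved on smooth functions, continuous from $H^1(0,L)$ into $H^{-2}(0,L)$, so the estimate extends to the whole of $H^1(0,L)$. I do not anticipate any genuine difficulty in this argument; the only delicate point is the bookkeeping of the integration by parts, namely choosing the correct number of transfers so that no endpoint value of $\partial_x^2\phi$ is ever needed, and grouping the two terms before applying Cauchy--Schwarz so that the optimal constant $1$ is obtained rather than a cruder $\sqrt{2}$.
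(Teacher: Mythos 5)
Your argument is correct: the identity
\begin{equation*}
\langle -\partial_x y-\partial_x^3 y,\phi\rangle=\int_0^L y\,\partial_x\phi-\int_0^L \partial_x y\,\partial_x^2\phi
\end{equation*}
holds (for $y\in H^1$ it is simply the definition of the distributional derivatives, so the final density step is not even needed), the boundary terms are handled properly, and the grouped Cauchy--Schwarz does deliver the constant $1$. The route is the dual-pairing version of what the paper does: the paper never integrates by parts, but instead quotes the representation characterization $H^{-2}(0,L)=\{g_0+\partial_x g_1+\partial_x^2 g_2:\ g_i\in L^2\}$ with the infimum norm, and simply exhibits the representative $g_0=0$, $g_1=-y$, $g_2=-\partial_x y$, which gives $\|{-\partial_x y-\partial_x^3 y}\|_{H^{-2}}\le(\|y\|_{L^2}^2+\|\partial_x y\|_{L^2}^2)^{1/2}=\|y\|_{H^1}$ in one line. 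The two proofs are equivalent via the standard identification of the dual norm of $H^2_0(0,L)$ with the infimum over such representations; yours makes that identification implicit and carries the bookkeeping by hand, while the paper's is shorter but leans on the cited characterization of the $H^{-2}$ norm. One cosmetic caveat: your version silently assumes the paper's $\|\cdot\|_{H^{-2}}$ is the dual norm of the full $H^2$ norm on $H^2_0$; since that is exactly what the quoted infimum characterization encodes, nothing is lost, but it is worth stating which norm on $H^{-2}$ is being used before claiming the sharp constant.
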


\begin{proof}
	We know that $H^{-2}(0,L)$ has the following characterization:
	\begin{align*}
		H^{-2}(0,L)=\left\{T \in \mathcal{D}'(0,L);\ T=g_0+\partial_{x}g_{1}+\partial_{x}^{2}g_{2},\ g_0,g_1,g_2\in L^2(0,L)\right\}
	\end{align*}
	where the derivatives are taken in the distributional sense. Moreover,
	\begin{align*}
		\|T\|_{H^{-2}(0,L)}=\inf\left\{\left(\sum_{j=1}^2\|g_i\|_{L^2(0,L)}^2\right)^\frac{1}{2};\ g_0,g_1,g_2\in L^2(0,L)\text{ and }T=g_0+\partial_{x}g_{1}+\partial_{x}^{2}g_{2}\right\}.
	\end{align*}
	
	Suppose that $y \in H^1(0,L)$. Defining $g_0=0$, $g_1=-y$ and $g_2=-\partial_{x}y$ we have $g_0,g_1,g_2 \in L^2(0,L)$ and
	\begin{align*}
		-\partial_{x}y-\partial_{x}^{3}y=g_0+\partial_{x}g_{1}+\partial_{x}^{2}g_{2}
	\end{align*}
	so $-\partial_{x}y-\partial_{x}^{3}y\in H^{-2}(0,L)$. Furthermore,
	\begin{align*}
		\|-\partial_{x}y-\partial_{x}^{3}y\|_{H^{-2}(0,L)}\leq \left(\sum_{j=1}^2\|g_i\|_{L^2(0,L)}^2\right)^\frac{1}{2}=\left(\|y\|_{L^2(0,L)}^2+\|\partial_{x}y\|_{L^2(0,L)}^2\right)^\frac{1}{2}=\|y\|_{H^1(0,L)},
	\end{align*}
	giving the result.
\end{proof}

%%%%%%%

\section{Controllability of Neumann and Dirichlet conditions}\label{control-ND}
From now on, $\alpha=N$. Fixed $m=0,\dots,N$, we will consider Neumann boundary controls on the first $m$ edges and Dirichlet boundary controls on the remaining ones. We only analyze the reachable states from the origin, that is, we will consider $u^0_j=0$ (the case where $u_0 \in \mathbb{L}^2(\mathcal{T})$ is arbitrary, and $u_0=0$ is done similarly and leads to the same observability inequality). In these terms, we have the following characterization of the controllability.

\begin{lemma}\label{contr. lemma}
	For $T>0$, the controls $g_j \in L^2(0,T)$, $j=1,\dots,m$ and $p_j\in L^2(0,T)$, $j=m+1,\dots,N$, drive $u^0_j=0$ to $u^T\in \mathbb{L}^2(\mathcal{T})$ if and only if
	\begin{align}\label{Char. Contr.}		\sum_{j=1}^N\int_0^{l_j}u_j^T\varphi_j^T=\sum_{j=1}^m\int_0^T\partial_{x}\varphi_{j}(t,l_j)g_j(t)-\sum_{j=m+1}^N\int_0^T\partial_{x}^{2}\varphi_{j}(t,l_j)p_j(t),%\ \forall\varphi^T \in \mathbb{L}^2(\mathcal{T}),
	\end{align}
 for any $\varphi^T=(\varphi_1^T,...,\varphi_N^T) \in \mathbb{L}^2(\mathcal{T})$ and $\varphi$ solution of the system \eqref{Adj} associated to $\varphi^T$.
\end{lemma}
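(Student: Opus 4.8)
The plan is to derive the fundamental duality identity relating the final state of the controlled system \eqref{LkdV-2} (with $u^0=0$) to the boundary controls, and then to read off the equivalence. First I would take $u$ to be the solution of \eqref{LkdV-2} with $u^0_j=0$ and controls $g_j$ ($j=1,\dots,m$), $p_j$ ($j=m+1,\dots,N$), and $\varphi$ the solution of the adjoint system \eqref{Adj} associated with an arbitrary $\varphi^T\in\mathbb{L}^2(\mathcal{T})$. Multiplying the equation for $u_j$ by $\varphi_j$, adding the equation for $\varphi_j$ tested against $u_j$, integrating over $(0,T)\times(0,l_j)$ and integrating by parts in both variables, the transport terms yield the boundary term $[u_j\varphi_j]_0^{l_j}$ while the third–order terms produce the boundary bracket $[\partial_x^2 u_j\,\varphi_j-\partial_x u_j\,\partial_x\varphi_j+u_j\,\partial_x^2\varphi_j]_0^{l_j}$; the time integration produces $\sum_{j=1}^N\int_0^{l_j}u_j(T,\cdot)\varphi^T_j\,dx$, the contribution at $t=0$ vanishing because $u^0=0$.

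Next I would evaluate the spatial boundary contributions. At $x=l_j$ the adjoint condition $\varphi_j(t,l_j)=0$ annihilates every term except $-\partial_x u_j(l_j)\partial_x\varphi_j(l_j)+u_j(l_j)\partial_x^2\varphi_j(l_j)$; inserting the control conditions $u_j(l_j)=0$, $\partial_x u_j(l_j)=g_j$ for $j\le m$, and $u_j(l_j)=p_j$, $\partial_x u_j(l_j)=0$ for $j>m$, reproduces exactly the right–hand side of \eqref{Char. Contr.}. At the central vertex $x=0$, the adjoint condition $\partial_x\varphi_j(t,0)=0$ removes the mixed term, and the vertex couplings handle the rest: using $u_j(0)=u_1(0)$, $\varphi_j(0)=\varphi_1(0)$ together with $\sum_{j=1}^N\partial_x^2 u_j(0)=-\alpha\,u_1(0)$ and $\sum_{j=1}^N\partial_x^2\varphi_j(0)=(\alpha-N)\varphi_1(0)$, the three groups of terms at $x=0$ combine into $(\alpha-N)\,u_1(0)\varphi_1(0)$, which vanishes precisely because $\alpha=N$. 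This cancellation at the vertex, where the compatibility between the primal Kirchhoff condition and the adjoint one enters, is the step I would treat most carefully.

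These manipulations are first carried out for smooth data, where every trace is classical; I would then pass to the general case by density, invoking Propositions \ref{reg. Adj.}, \ref{trace-Adj.} and \ref{phi_xx in L^2} to guarantee that $\partial_x\varphi_j(\cdot,l_j),\ \partial_x^2\varphi_j(\cdot,l_j)\in L^2(0,T)$ depend continuously on $\varphi^T\in\mathbb{L}^2(\mathcal{T})$, so that both sides of the identity are continuous with respect to the data and all pairings are well defined. This density and trace justification for the low–regularity transposition solutions is the main technical obstacle. Once the identity \eqref{Char. Contr.}, with $u^T_j$ replaced by $u_j(T,\cdot)$, is established for every $\varphi^T\in\mathbb{L}^2(\mathcal{T})$, the equivalence follows immediately: if the controls drive $0$ to $u^T$ then $u(T,\cdot)=u^T$ and \eqref{Char. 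Contr.} holds; conversely, if \eqref{Char. Contr.} holds, subtracting it from the identity yields $\sum_{j=1}^N\int_0^{l_j}\big(u_j(T,\cdot)-u^T_j\big)\varphi^T_j=0$ for all $\varphi^T\in\mathbb{L}^2(\mathcal{T})$, whence $u(T,\cdot)=u^T$.
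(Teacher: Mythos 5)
Your proposal is correct and follows essentially the same route as the paper: the duality identity obtained by multiplying the state equation by the adjoint solution, integrating by parts in $t$ and $x$, using the boundary and vertex conditions, and then reading off the equivalence via the density of the $\mathbb{L}^2(\mathcal{T})$ pairing. One minor bookkeeping remark: the vertex contributions at $x=0$ in fact cancel identically for every $\alpha$ (this is exactly what the adjoint condition $\sum_{j=1}^N\partial_x^2\varphi_j(0)=(\alpha-N)\varphi_1(0)$ is designed to achieve), not only because $\alpha=N$; this does not affect your conclusion, since the appendix assumes $\alpha=N$ throughout.
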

\begin{proof}
	Let $u$ be the solution of \eqref{LkdV-2} corresponding to $u^0_j$. Given $\varphi^T \in \mathbb{L}^2(\mathcal{T})$, multiplying the first equation in \eqref{LkdV-2} by the solution $\varphi$ of \eqref{Adj}, integrating by parts, and using the boundary conditions, we obtain
	\begin{align*}
		\sum_{j=1}^N\int_0^{l_j}u_j(T,x)\varphi_j^T-\sum_{j=1}^N\int_0^{l_j}u_j^0(x)\varphi_j(0,x)&=\sum_{j=1}^m\int_0^T\partial_{x}\varphi_{j}(t,l_j)g_j(t)-\sum_{j=m+1}^N\int_0^T\partial_{x}^{2}\varphi_{j}(t,l_j)p_j(t)
	\end{align*}
	so, for $u^0_j=0$ we have that
	\begin{align}\label{eq 3.3}
		\sum_{j=1}^N\int_0^{l_j}u_j(T,x)\varphi_j^T(x)=\sum_{j=1}^m\int_0^T\partial_{x}\varphi_{j}(t,l_j)g_j(t)-\sum_{j=m+1}^N\int_0^T\partial_{x}^{2}\varphi_{j}(t,l_j)p_j(t), \forall\varphi^T \in \mathbb{L}^2(\mathcal{T}).
	\end{align}
	If $u(T,\cdot)=u^T$ then \eqref{Char. Contr.} immediately follows from \eqref{eq 3.3}. Conversely, if \eqref{Char. Contr.} holds, thanks to \eqref{eq 3.3} holds that
	\begin{align*}
		(u(T,\cdot),\varphi^T)_{\mathbb{L}^2(\mathcal{T})}=(u^T,\varphi^T)_{\mathbb{L}^2(\mathcal{T})},\ \forall \varphi^T \in \mathbb{L}^2(\mathcal{T}),
	\end{align*}
	and  consequently $u(T,\cdot)=u^T$, showing the lemma.
\end{proof}

Consider the bilinear map $B:\mathbb{L}^2(\mathcal{T})\times \mathbb{L}^2(\mathcal{T})\rightarrow \mathbb{R}$ given by
\begin{align*}
	B(\varphi^T,\psi^T)=\sum_{j=1}^{N}\int_0^Tu_j(T,x)\psi_j^T(x),
\end{align*}
with $u=(u_1,...,u_N)$ being the solution of the system \eqref{LkdV-2} with boundary controls $p_j=-\partial_{x}^{2}\varphi_{j}(\cdot,l_j)$ and $g_j=\partial_{x}\varphi_{j}(\cdot,l_j)$, where $\varphi(t,\cdot)=S(T-t)\varphi^T$. Let us prove some properties of the bilinear map $B$. 

\vspace{0.2cm}
\noindent\textbf{(i) B is continuous.}
\vspace{0.2cm}

Using the multiplier method, we obtain
\begin{align*}
	B(\varphi^T,\psi^T)=\sum_{j=1}^{N}\int_0^Tu_j(T,x)\psi_j^T(x)%&=\sum_{j=1}^m\int_0^Tg_j(t)\psi_{jx}(t,l_j)-\sum_{j=m+1}^N\int_0^Tp_j(t)\psi_{jxx}(t,l_j)\\
	&=\sum_{j=1}^m\int_0^T\partial_{x}\varphi_{j}(t,l_j)\psi_{jx}(t,l_j)+\sum_{j=m+1}^N\int_0^T\partial_{x}^{2}\varphi_{j}(t,l_j)\psi_{jxx}(t,l_j).
\end{align*}
From the Propositions \ref{reg. Adj.} and \ref{phi_xx in L^2}, we have that $B$ is continuous, giving (i).

\vspace{0.2cm}
\noindent\textbf{(ii) If B is coercive, the exact controllability holds, that is, $u(T,\cdot)=u^T$.}
\vspace{0.2cm}

Indeed, assume for a moment that $B$ is coercive, i.e., the observability inequality \eqref{Obs. Ineq. 2} holds\footnote{For a detailed discussion on the equivalence between coercivity and the observability inequality, we recommend consulting \cite{Lions}.}. Then given $u^T\in \mathbb{L}^2(\mathcal{T})$, from the Lax-Milgram theorem, there exists $\varphi^T \in \mathbb{L}^2(\mathcal{T})$ such that
\begin{align*}	\sum_{j=1}^{N}\int_0^{l_j}u_j^T\psi_j^T=B(\varphi^T,\psi^T),\ \forall\psi^T\in \mathbb{L}^2(\mathcal{T}).
\end{align*}
Thus, for $\varphi(t,\cdot)=S(T-t)\varphi^T$, $p_j=-\partial_{x}^{2}\varphi_{j}(\cdot,l_j)$ and $g_j=\partial_{x}\varphi_{j}(\cdot,l_j)$ the solution $u$ of \eqref{LkdV-2} satisfies
\begin{align*}	\sum_{j=1}^{N}\int_0^{l_j}u_j^T\psi_j^T=B(\varphi^T,\psi^T)=\sum_{j=1}^m\int_0^Tg_j(t)\psi_{jx}(t,l_j)-\sum_{j=m+1}^N\int_0^Tp_j(t)\psi_{jxx}(t,l_j),
\end{align*}
for every $\psi^T\in \mathbb{L}^2(\mathcal{T})$, which implies, by Lemma \ref{contr. lemma}, that $u(T,\cdot)=u^T$, which ensures (ii).
\vspace{0.1cm}

%\noindent\textbf{(iii) B is coercive.}
%\vspace{0.2cm}
%Therefore, to prove the exact controllability, it is sufficient to show that $B$ is coercive.
%\begin{align*}
%	B(\varphi^T,\varphi^T)=\sum_{j=1}^{m}\|\partial_{x}\varphi_{j}(\cdot,l_j)\|_{L^2(0,T)}^2+\sum_{j=m+1}^{N}\|\partial_{x}^{2}\varphi_{j}(\cdot,l_j)\|_{L^2(0,T)}^2
%\end{align*}

\section{The set \texorpdfstring{$\mathcal{N}^{\dagger}$ is non empty and countable}{Countable critical}}\label{app: countable ciritcal}

In this part, we prove that the new set of critical length, defined in \eqref{critical-3}, is non-empty and countable. Recall that $L \in \mathcal{N}^\dagger$ if it can be written as $L^2=-\left(w_{1}^2+w_{1}w_{2}+w_{2}^2\right)$ where
\begin{align}
\label{A1}
(w_{1},w_{2})\in \mathbb{C}^2, \text{ \   such that \ } w_{1}^{2}e^{w_{1}}=w_{2}^{2}e^{w_{2}}=(w_{1}+w_{2})^{2}e^{-(w_{1}+w_{2})}.
\end{align}
In particular, we show a direct relation between this set and $\mathcal{N}^{*}$, defined in \eqref{critical-2}. Let $(w_{1},w_{2})$ satisfying \eqref{A1}, take $c=w_{i}^{2}e^{w_{i}}=(w_{1}+w_{2})^{2}e^{-(w_{1}+w_{2})}$, $i=1,2$ then we have that $z_{i}=\frac{w_{i}}{2}$ is solution of $z_{i}e^{z_{i}}=\pm (\frac{c}{4})^{1/2}$. Similarly, $(z_{1}+z_{2})^{2}e^{-2(z_{1}+z_{2})}=\frac{c}{4}$, which implies $(z_{1}+z_{2})e^{-(z_{1}+z_{2})}=\pm (\frac{c}{4})^{1/2}.$ Then, by symmetry, we have either one of the following cases 
\begin{itemize}
    \item $z_{1}e^{z_{1}}=z_{2}e^{z_{2}}=-(z_{1}+z_{2})e^{-(z_{1}+z_{2})}$.
    \item $z_{1}e^{z_{1}}=-z_{2}e^{z_{2}}=(z_{1}+z_{2})e^{-(z_{1}+z_{2})}$.
    \item $z_{1}e^{z_{1}}=z_{2}e^{z_{2}}=(z_{1}+z_{2})e^{-(z_{1}+z_{2})}$.
\end{itemize}
The first case is the equation related to the critical set $\mathcal{N}^{*}$; therefore, it has a countable number of solutions. For the second and third cases, we can follow \cite[Proposition 3 and 4]{GG1} to ensure that it has a countable number of solutions.

Finally, observe that if $L\in \mathcal{N}^{*}$, then $2L\in \mathcal{N}^{\dagger}$. If $L\in \mathcal{N}^{*}$, then $L^2=-\left(z_{1}^2+z_{1}z_{2}+z_{2}^2\right)$, for some$(w_{1},w_{2})\in \mathbb{C}^2$ such that
\[z_{1}e^{z_{1}}=z_{2}e^{z_{2}}=-(z_{1}+z_{2})e^{-(z_{1}+z_{2})}.\]
It is easy to see that $w_i=2z_i$, for $i=1,2$, satisfy \eqref{A1}. Moreover
\[-\left(w_{1}^2+w_{1}w_{2}+w_{2}^2\right)=-4\left(z_{1}^2+z_{1}z_{2}+z_{2}^2\right)=(2L)^{2}.\]

\subsection*{Acknowledgments} We would like to express our sincere gratitude to the reviewer for the positive report, and to the editor for handling our manuscript.  This work is part of the Ph.D. thesis of da Silva at the Department of Mathematics of the Universidade Federal de Pernambuco.

%%%%%%%%%%%%%%%% REFERENCES %%%%%%%%%%%%%%%%%%%%%%%%%%%%
%%%%%%%%%%%%%%

%\section*{Statements \& Declarations}

%\subsection*{Funding} 
%R. de A. Capistrano–Filho was partially supported by CAPES/COFECUB grant number \linebreak 88887.879175/2023-00, CNPq grant numbers 421573/2023-6 and 307808/2021-1, and Propesqi (UFPE). J. Da Silva acknowledges support from CNPq. H. Parada was funded by the Labex CIMI postdoctoral fellowship under the grant agreement ANR-11-LABX-0040.

%\subsection*{Data Availibility} 
%It does not apply to this article as no new data were created or analyzed in this study.

%\subsection*{Competing Interests} 
%The authors have no relevant financial or non-financial interests to disclose.
%
%\subsection*{Author Contributions}
%All authors contributed equally to this work. All authors read and approved the final manuscript.


\begin{thebibliography}{9}
\bibitem{AdaNoj14}
\textsc{R. Adami, C. Cacciapuoti, D. Finco and D. Noja,} 
{\it Variational properties and orbital stability of standing waves for NLS equation on a star graph,}
 J. Differential Equations, 257, 3738--3777 (2014).

\bibitem{AdaNoj15}
\textsc{R. Adami, C. Cacciapuoti, D. Finco and D. Noja,}  
{\it Stable standing waves for a NLS graphs as local minimizers of the constrained energy,}
 J. Differential Equations, 260,  7397--7415 (2016).
 
\bibitem{AmCr1}
\textsc{K. Ammari and E. Cr\'epeau,} 
\textit{Well-posedness and stabilization of the Benjamin-Bona-Mahony equation on star-shaped networks}, Systems \& Control Letters, 127, 39--43 (2019).

\bibitem{Ammari and Crepeau 2018} 
\textsc{K. Ammari and E. Crépeau,} 
\textit{Feedback stabilization and boundary controllability of the Korteweg-de Vries equation on a star-shaped network}. SIAM Journal on Control and Optimization, 56(3), pp. 1620-1639, 2018.

\bibitem{AngGol17a} 
\textsc{J. Angulo and  N. Goloshchapova,} 
{\it On the orbital instability of excited states for the NLS equation with the $\delta$-interaction on a star graph,} Discrete Contin. Dyn. Syst. A., 38, no. 10, 5039--5066 (2018).

\bibitem{AngGol17b} 
\textsc{J. Angulo and  N. Goloshchapova,} 
{\it Extension theory approach in the stability of the standing waves for the NLS equation with point interactions on a star graph}, Adv. Differential Equations 23, no. 11-12, 793--846 (2018).

\bibitem{ArCaDo}
\textsc{F. D. Araruna, R. A. Capistrano-Filho, and G. Doronin,} 
\emph{Energy decay for the modified Kawahara equation posed in a bounded domain}, 
Journal of Mathematical Analysis and Applications 385 (2012), 743--756.

\bibitem{Aubin} 
\textsc{J.-P. Aubin,} 
Un th\'eor\`eme de compacit\'e, C. R. Acad. Sci. Paris 256 (1963), 5042--5044.

\bibitem{Baudouin} 
\textsc{L. Baudouin and M. Yamamoto,} 
\textit{Inverse problem on a tree-shaped network: unified approach for uniqueness}, Applicable Analysis, 94, (11), 2370--2395 (2015).

\bibitem{BK}
\textsc{G. Berkolaiko  and  P. Kuchment,} 
{\it Introduction to Quantum Graphs},
 Mathematical Surveys and Monographs, 
 186, Amer. Math. Soc., Providence, RI, (2013).

\bibitem{BlaExn08}
\textsc{J. Blank, P. Exner and M. Havlicek,} 
{\it Hilbert Space Operators in Quantum Physics},
 2nd edition, Theoretical and Mathematical
Physics, Springer, New York, 2008.

\bibitem{bona}  
\textsc{J. L. Bona and R. C. Cascaval,} 
{\it Nonlinear dispersive waves on trees},  Can. Appl. Math. Q. 16 (2008), no. 1, 1--18.

\bibitem{Bona Sun Zhang} 
\textsc{J.L. Bona, S.-M. Sun, and B.-Y. Zhang,}
\textit{ A nonhomogeneous boundary-value-problem for the Korteweg-de Vries equation posed on a finite domain}, Comm. Partial Differential Equations, 28, 1391-1438, (2003).

\bibitem{BurCas01}
\textsc{R. Burioni, D. Cassi, M. Rasetti, P. Sodano and A. Vezzani,}
{\it Bose-Einstein condensation on inhomogeneous complex networks,}
 J. Phys. B: At. Mol. Opt. Phys., 34, 4697--4710 (2001).
	
\bibitem{CaChSoGo}
\textsc{R. A. Capistrano-Filho, B. Chentouf, L. de Sousa, and V. H. Gonzalez Martinez,} 
\emph{Two stability results for the Kawahara equation with a time-delayed boundary control,}
 Zeitschrift für Angewandte Mathematik und Physik, 74:16 (2023), 1--26.

\bibitem{CaGaPa} 
\textsc{R. A. Capistrano-Filho, F. A. Gallego and A. F. Pazoto,}
 \emph{On the well-posedness and large-time behavior of higher order Boussinesq system}, Nonlinearity 32 (2019) 1852-1881.

\bibitem{Capistrano 2017}
\textsc{M. A. Caicedo, R. A. Capistrano--Filho, B.-Y. Zhang,}
 \textit{Neumann Boundary Controllability of the Korteweg--de Vries Equation on a Bounded Domain}, SIAM Journal on Control and Optimization v. 55, 3503-3532, (2017).

\bibitem{CaPaRo}
\textsc{R. A. Capistrano-Filho, A. F. Pazoto, and L. Rosier,} 
\emph{Control of Boussinesq system of KdV-KdV type on a bounded interval}, 
ESAIM-COCV 25: 58 (2019), 1-55. 

\bibitem{Carreno} 
\textsc{N. Carre\~no and C. Loyola,} 
\emph{An explicit time for the uniform null controllability of a linear Korteweg-de Vries equation}, Journal of Evolution Equations, 23 (2023), Paper No. 55, 25 pp.

\bibitem{Cav} 
\textsc{M. Cavalcante,} 
\textit{The Korteweg--de Vries equation on a metric star graph}, Z. Angew. Math. Phys. 69: 124 (2018).

\bibitem {cerpa}
\textsc{E. Cerpa,}
\emph{Exact controllability of a nonlinear Korteweg-de Vries equation on a critical spatial domain}, 
SIAM J. Control Optim., 46 (2007), 877--899.

\bibitem {cerpatut}
\textsc{E. Cerpa,}
\emph{Control of a Korteweg-de Vries equation: a tutorial},  
Math. Control Relat. Fields, 4 (2014), 45--99.

\bibitem {cerpa1}
\textsc{E. Cerpa and E. Cr\'{e}peau,}
 \emph{Boundary controllability for the nonlinear Korteweg-de Vries equation on any critical domain}, 
 Ann. I.H. Poincar\'{e}, 26 (2009), 457--475.

\bibitem{Cerpa} 
\textsc{E. Cerpa, E. Cr\'epeau, and C. Moreno,}
 {\it On the boundary controllability of the Korteweg–de Vries equation on a star-shaped network}, IMA Journal of Mathematical Control and Information, (37):1, 226--240 (2020).

\bibitem{Cerpa1}
\textsc{E. Cerpa, E. Crépeau, and J. Valein,} 
\textit{Boundary controllability of the Korteweg-de Vries equation on a tree-shaped network}, Evol. Equ. Control Theory, 9(3): 673-692 2020.

\bibitem{CeRiZh}
\textsc{E. Cerpa, I. Rivas, and B.-Y. Zhang,} 
\emph{Boundary controllability of the Korteweg-de Vries equation on a bounded domain},
SIAM J. Control Optim., 51:4 (2013), 2976--3010.

\bibitem {coron}
\textsc{J.-M. Coron and E. Cr\'{e}peau,}
 \emph{Exact boundary controllability of a nonlinear KdV equation with a critical length}, 
 J. Eur. Math. Soc., 6 (2004), 367--398.
	
\bibitem{Freedman}
\textsc{M. Freedman, L. Lov\'asz, and L. Schrijver,} 
\textit{Reflection positivity, rank connectivity, and homomorphism of graphs.} Journal of the American Mathematical Society, 20, 37--51 (2007).

\bibitem{GG1}
\textsc{O. Glass and S. Guerrero,}
\emph{Controllability of the Korteweg-de Vries equation from the right Dirichlet boundary condition}, 
Systems Control Lett., 59 (2010),390--395.

\bibitem {Guilleron} 
\textsc{J.-P. Guilleron,}
 \textit{Null controllability of a linear KdV equation on an interval with special boundary conditions}, Mathematics of Control, Signals, and Systems, 26 (2014), 375--401.

\bibitem{Ignat2011} 
\textsc{L. I. Ignat, A. F. Pazoto and L. Rosier,}
 \textit{Inverse problem for the heat equation and the Schr\"odinger equation on a tree}, Inverse Problems, 28(1), (2011).

\bibitem{Krammer Rivas Zhang} 
\textsc{E. Kramer, I. Rivas, and B.-Y. Zhang,}
 \textit{Well-posedness of a class of initial-boundary-problem for the Korteweg-de Vries equation on a bounded domain}, ESAIM Control Optim. Calc. Var., 19, 2013, 358-384.

\bibitem{K} \textsc{P. Kuchment,}
 {\it Quantum graphs, I. Some basic structures,} Waves Random Media, 14, 107--128 (2004).

\bibitem{Lions} \textsc{J.-L. Lions,}
 \emph{Exact controllability, stabilizability, and perturbations for distributed systems}, SIAM Rev., 30 (1988), 1--68.

\bibitem{Mug15} \textsc{D. Mugnolo,}
 {\it Mathematical Technology of Networks}, Bielefeld, Springer Proceedings in Mathematics $\&$ Statistics 128, 2015.

%\bibitem{MNS} \textsc{D. Mugnolo, D. Noja and C. Seifert,}
% {\it Airy-type evolution equations on star graphs,} Analysis \& PDE, 11(7), 1625--1652 (2018).

\bibitem{Mugnolobbm} \textsc{D. Mugnolo and J. F. Rault,}
 {\it Construction of exact travelling waves for the Benjamin–Bona–Mahony equation on networks,} Bull. Belg. Math. Soc. Simon Stevin, 21, 415–436 (2014).
 
 \bibitem{MugNoSe} \textsc{D. Mugnolo, D. Noja, and C. Seifert,}
 {\it Airy-type evolution equations on star graphs,} Anal. PDE 11:7 (2018), 1625--1652.

\bibitem{KdV by flatness} 
\textsc{I. Rivas, P. Martin, L. Rosier, and P. Rouchon,}
 \textit{Exact controllability of a linear Korteweg-de Vries by the flatness approach}, SIAM J. Control Optim., Vol. 57, No 4, pp. 2467-2486, 2019.

\bibitem{Parada 2023} 
\textsc{H. Parada,}
 \textit{Null controllability of KdV equation in a star-shaped network}, Evolution Equations and Control Theory, vol. 13(3), pp. 719-750, 2023.

\bibitem{Parada2022a} \textsc{H. Parada, E. Cr\'epeau, and C. Prieur,}
\textit{Delayed stabilization of the Korteweg--de Vries equation on a star-shaped network}, Mathematics of Control, Signals, and Systems, 34(3), 559--605, 2022.

\bibitem{Parada2022b} 
\textsc{H. Parada, E. Cr\'epeau, and C. Prieur,}
 \textit{Global well-posedness of the KdV equation on a star-shaped network and stabilization by saturated controllers}, SIAM Journal on Control and Optimization, 60(4), 2268--2296, 2022.

\bibitem{Rosier} 
\textsc{L. Rosier,} 
\textit{Exact Boundary Controllability for the Korteweg-De Vries Equation on a Bounded Domain}, ESAIM: Control, Optimisation and Calculus of Variations, Vol. 2, 1997, 33-55.

\bibitem{RZsurvey} \textsc{L. Rosier and B.-Y. Zhang,}
 \emph{Control and stabilization of the Korteweg-de Vries equation: Recent progress}, J. Syst Sci $\&$ Complexity, 22 (2009), 647--682.

\bibitem{russell3} \textsc{D. L. Russell,}
 \emph{Computational study of the Korteweg-de Vries equation with localized control action}, Distributed Parameter Control Systems: New Trends and Applications, G. Chen, E. B. Lee, W. Littman, and L. Markus, eds., Marcel Dekker, New York, 1991.

\bibitem{russell2}\textsc{D. L. Russell and B.-Y. Zhang,}
 \emph{Controllability and stabilizability of the third-order linear dispersion equation on a periodic domain}, SIAM J. Control and Optimization, 31 (1993), 659--676.

\bibitem{Russell1} \textsc{D. L. Russell and B.-Y. Zhang,}
 \emph{Exact controllability and stabilizability of the Korteweg-de Vries equation}, Trans. Amer. Math. Soc., 348 (1996), 3643--3672.

\bibitem{Simon} \textsc{J. Simon,}
 \emph{Compact sets in the space $L^p(0,T; B)$}, Ann. Mat. Pura Appl. (4), 146 (1987), 65--96.

\bibitem{zhang2} \textsc{B.-Y. Zhang,} \emph{Exact boundary controllability of the Korteweg-de Vries equation}, SIAM J. Cont. Optim., 37 (1999), 543--565.

\bibitem{zhang4} \textsc{B.-Y. Zhang,} \emph{Some results for nonlinear dispersive wave equations with applications to control}, Ph. D thesis, University of Wisconsin-Madison, 1990.
\end{thebibliography}
	\end{document}